 \setlist{nolistsep}
 \def\Num{\mathrm{Num}}
 \def\NS{\mathrm{NS}}
 \def\bZ{\mathbb Z}
 \def\bP{\mathbb P}
 \def\ar[r]{\to}
 \def\cI{\mathcal I}
 \def\cA{\mathcal A}
 \def\cO{\mathcal O}
 \DeclareMathOperator{\Pic}{Pic}
 \newtheorem{thm}{Theorem} 
 \newtheorem*{thm*}{Theorem}
 \newtheorem*{prop*}{Proposition}
 \newtheorem{cor}[thm]{Corollary}
 \newtheorem*{cor*}{Corollary}
 \newtheorem{lem}[thm]{Lemma}
 \newtheorem*{lem*}{Lemma}
 \newtheorem*{claim*}{Claim}
 \newtheorem{prop}[thm]{Proposition}
 \theoremstyle{remark}
 \newtheorem{rem}[thm]{Remark}
 \newtheorem*{rem*}{Remark}
 \newtheorem{crit-rem}[thm]{Critical remark}
 \newtheorem{example}[thm]{Example}
 \newtheorem*{example*}{Example}
 \newtheorem*{defn*}{Definition}
 \newtheorem*{con*}{Conjecture}
\def\inv{^{-1}}
 \DeclareMathOperator{\Hom}{Hom}
\def\cA{\mathcal A}
\def\cO{\mathcal O}
\def\bP{\mathbb P}
\def\cF{\mathcal{F}}
\def\cE{\mathcal E}
\def\refp #1.{(\ref{#1})}
\newcommand\carets [1]{\langle #1 \rangle}
\newcommand{\Cal}[1]{\mathcal #1}
\def\sbr #1.{^{[#1]}}
\def\sfl #1.{^{\lfloor #1\rfloor}}
\def\inv{^{-1}}
\def\?{{\bf{??}}}
\def\Proj{\textrm{Proj}}
\def\Y{\Cal Y}
\def\P{\mathbb P}
\def\Z{\mathbb Z}
\def\Spec{\text{\rm Spec} }
\def\O{\mathcal O}
\def\g{\mathfrak g}
\def\1/2{\frac{1}{2}}
\def\I{\mathcal{ I}}
\def\Y{\Cal Y}
\def\im{\text{im}}
\def\2{{[2]}}
\def\l{\ell}
\def\nl{\newline}
\def\<{\langle}
\def\>{\rangle}
\def\im{\text{im}}
\def\2{{[2]}}
\def\l{\ell}
\def\Proj{\text{Proj}}
\def\scl #1.{^{\lceil#1\rceil}}
\def\spr #1.{^{(#1)}}
\def\sbc #1.{^{\{#1\}}}
\def\subpr#1.{_{(#1)}}
\def\beq{\begin{equation*}}
\def\eeq{\end{equation*}}
\def\g3{{\Gamma\spr 3.}}
\newcommand{\eqspl}[2]{
\begin{equation}\label{#1}
\begin{split}
#2\end{split}\end{equation}}
\newcommand{\exseq}[3]{
0\to #1\to #2\to #3\to 0
}
\newcommand{\beginalphaenum}{
\begin{enumerate}\renewcommand{\labelenumi}{ }
\item \begin{enumerate}
}
\def\eex{\end{rm}\end{example}}
\begin{document} 
 	\title{A  rank-$2$ vector bundle on ${\P}^2\times {\P}^2$
 	and projective geometry of nonclassical Enriques surfaces in characteristic 2 }
 	\author 
 	{Ziv Ran and J\"urgen Rathmann}
 %
 %
 	\thanks{arxiv.org 2504.16174 }
 	\date {\DTMnow}
 %
 %
 	\address {\nl ZR: UCR Math Dept. \ 
 	ziv.ran @  ucr.edu\nl
 	JR:jk.rathmann@gmail.com
 	}
 	
 	 \subjclass[2010]{14n05, 14j60}
 	\keywords{vector bundle, projective space, monad, nonclassical Enriques surface}
 	\begin{abstract}
 We study zero-sets pf  a particular family rank-$2$ indecomposable vector bundles
 $E$  on $\P^2\times\P^2$ 
in  characteristic $2$, introduced in \cite{p2p2-I}.
 We show that the zero-sets of a suitable twist of $E$ form
 a family of nonclassical smooth Enriques surfaces of bidegree (4, 4) whose  general member is
 ordinary in the sense that Frobenius acts isomorphically on $H^1$, and which admits a divisor
 consisting of smooth supersingular surfaces (Frobenius acts as zero).
We show further that every nonclassical Enriques surface
 of bidegree (4, 4) that is bilinearly normal arises as a zero-set in this way.

 		\end{abstract}
 			\maketitle
 			A well-known result of Serre asserts that a codimension $2$ locally complete intersection in projective space
 			is the zero scheme of a section of a rank-$2$ vector bundle if and only if its canonical line bundle is
 			the restriction of a line bundle on the ambient space. Serre's result is also
 			valid for certain other ambient spaces.
 			When the canonical bundle is trivial, the condition is automatically satisfied.
 			Among algebraic surfaces, there are three well-known classes of minimal surfaces with a trivial canonical bundle, 
 			and two of them 
 			have models in $\bP^4$ with corresponding bundles:
 			\begin{enumerate}
 			\item
 			Abelian surfaces can be embedded into $\bP^4$ by an ample $(1,5)$-polarization, and the
 			corresponding bundle has first been described by Horrocks and Mumford in 1973.
 			\item
 			K3 surfaces with an ample divisor of degree $6$ can be embedded in $\bP^4$ as  a complete
 			intersection of a quadric and a cubic hypersurface, hence the corresponding vector bundle splits.
 			\item
 			In characteristic 2 only,  there exist non-classical (ordinary and supersingular)
 			 Enriques surfaces with a trivial canonical bundle;
 			however, the self-intersection formula \cite[App. A]{hart} shows that they cannot be embedded into $\bP^4$.
 			\end{enumerate}
 			In light of this one can naturally ask whether non-classical Enriques surfaces
 			can be embedded in other 'standard' homogeneous rational 4-folds
 			and give rise to new rank-2 vector bundles..

In \cite{p2p2-I} we studied
 		families of rank-$n$ bundles $E$ on $\P^n\times\P^n$
 		in all positive characteristics, with special attention to the case $n=2$ in
 		characteristic 2. In this paper we continue the study of the latter case,
focusing on the associated zero-sets of $E$ and some of its twists. 
The main results may be summarized as follows.
 		\begin{enumerate}
 		\item (See   \S\ref{geometry}) The zero set $Y$ of a general section of a suitable  twist of $E$ is
 		a  smooth nonclassical Enriques surface; thus $Y$ has trivial canonical bundle and irregularity 1. 
 		(See \S\ref{ss-sec}) The general smooth zero set is 
 		ordinary (\emph{singular} in the sense of Bombieri-Mumford,i.e.  
 		Frobenius acts isomorphically on $H^1(\O_Y)$) and there is a nontrivial
 		codimension-1 subfamily of \emph{supersingular} smooth zero sets
 		 (Frobenius is zero on $H^1(\O_Y)$). (See \S\ref{moduli-sec}) There is a close relationship
 		 between the moduli of the bundle $E$ and that of the surface $Y$.
 		\item (See \S\ref{reducible-sec}) The same twist has special sections with zero-set that is 
 		a normal-crossing surface  of the form $Y_1\cup_WY_2$ where $Y_1$
 		is isomorphic to $ \P^2$ , 
 		$Y_2$ is an elliptic ruled surface and $W$ is a smooth  supersingular  elliptic curve
 		\item (See \S\ref{embeddings}) Any nonsingular nonclassical ($K=0$) Enriques surface in $\P^2\times\P^2$ that is 'bilinearly
 		normal',  i.e. does not lift to a higher dimensional $\P^r\times\P^s$, occurs as one of the
 	zero-sets $Y$ above.
 		
 		\end{enumerate}
 		An interesting question the we are unable to answer in whether,
 		at least in the ordinary case,  the canonical K3 
 		double cover
 		of our Enriques surfaces embed  compatibly in $\P^2\times\P^2\times\P^1$ and if so
 		whether they are Pfaffian, corresponding to a vector bundle that is somehow related to our bundle.
 		\par  Eisenbud, Popescu and Walter \cite{ eisenbud-p-w-enriques}, \S 8 have constructed
 		nonclassical Enriques surfaces of degree 10
 		 in $\P^5$ as (higher) degeneracy loci (specifically, intersection loci of a pair of Lagrangian
 		 subbundles of $\wedge^3H^0(\O_{\P^5}(1)$).
 		 
 	
 \bigskip
 		
 	\section*{Preliminaries}\subsection{Projective planes} \label{projective-planes}
 	Consider a copy of $\P^2$ with hyperplane class $L$ and let $V_L=H^0(\O(L))^*$ so we have an exact sequence
 	\[\exseq{\O(-L)}{V_L}{Q_L}\]
 	where $Q_L=T_{\P^2}(-L)\cong \Omega^1_{\P^2}(2L)$ is the universal quotient bundle.
 	Any nonzero section of $Q_L$ vanishes at a unique point $p\in\P^2$ and yields an exact sequence
 	\[\exseq{\O}{Q_L}{\I_p(L)}.\]
 	Note that $\wedge^2Q_L=\O(L)$ whence 
 	an isomorphism
 	\[Q_L\simeq \Hom(Q_L, \O(L)),\]
 	\[v\mapsto v^t\]
 	and a skew-symmetric  pairing
 	\[Q_L\times Q_L\to \O(L),\]
 	\[(v, w)\mapsto \carets{v, w}.\]
 	Given $\phi\in H^0(Q_L)$ vanishing at $p$ and $\psi\in H^0(Q_L)=H^0(\Hom(Q_L, \O(L)))$,
 	if $\psi$ vanishes at $q\neq p$, then the composite $\carets{v, w}=\psi\circ\phi\in H^0(\O(L))$ 
 	vanishes on the line spanned by $p,q$, while if $q=p$, i.e. $\psi=\phi^t$, 
 	then the composite $\psi\circ\phi=0$.
 	For any nonzero section $\phi$ vanishing at $p$, the map $\psi\mapsto\psi\circ\phi$ yields a surjection
 	$H^0(Q_L)\to H^0(\Hom(Q_L, \I_p(L)))= H^0(\I_p(L))$ with kernel generated by $\phi^t$. 
 	\par Note  under the above isomorphism the composition map
 	\[H^0(Q_L)\times H^0(\Hom(Q_L, \O(L))\to H^0(\O(L))\]
 	is compatible up to scalar multiples with the wedge product
 	\[\wedge^2V\to V^*\]
 	and in particular we have $\phi^t\circ\phi=0$.
 	For $v\in V$ we denote the 2-dimensional subspace 
 	$v\wedge V\subset V^*$ by $U_v$.
 	\par
 	\subsection{Products and maps}
 	Now consider two copies of $\P^2$, denoted $\P^2_L, \P^2_h$ with respective line generators $L, h$ and
 	respective tangent bundles $T_L, T_h$, and quotient bundles $Q_L=T_L(-L), Q_h=T_h(-h)$ as above. 
 	Setting $V_L=H^0(\O(L))^*, V_h=H^0(\O(h))^*$, we then have
 	\[H^0(Q_L\otimes Q_h)=V_L\otimes V_h\]
 	and this 9-dimensional vector space is endowed with an increasing filtration (by cones)
 	\[R_1\subset R_2\subset R_3=V_L\otimes V_h\]
 	where $R_i$ denotes the elements expressible as a sum of $i$ decomposable elements.
 	The component $R_i$
 	can be identified as the locus of $3\times 3$ matrices of rank $i$ or less.
 	Note that $R_1$ has dimension $3+3-1=5$ while $R_2$ has dimension 8.
 	A section in $R_2\setminus  R_1$ vanishes at a unique point $(p, p')\in\P^2_L\times \P^2_h$,
 	while any section in $R_3\setminus R_2$ is nowhere vanishing.\par
 	Now consider a rank-1 element $\phi=v\otimes w\in V_L\otimes V_h$. 
 	It is easy to check that the map $\phi^t :V_L\otimes V_h\to V_L^*\otimes V_h^*=\O(L+h)$ 
 	has kernel $v\otimes V_h+V_L\otimes w$ (a 5-dimensional subspace) and image
 	the 4-dimensional subspace $U_v\otimes U_w$, which coincides 
 	with the set of bilinear forms vanishing on $Z_{v, w}:=<v>\times\P^2_h\cup\P^2_L\times<w>$.\par
 	Next, for a rank-2 element $\phi_1+\phi_2=v_1\otimes w_1+v_2\otimes w_2$, we have that
 	\[(v_1\otimes V_h+V_L\otimes w_1)\cap (v_2\otimes V_h+V_L\otimes w_2)=k v_1\otimes w_2+k v_2\otimes w_1\]
 	is 2 dimensional while 
 	\[U_{v_1}\otimes U_{w_1}\cap U_{v_2}\otimes U_{w_2}=k \carets{v_1,v_2}\otimes \carets{w_1, w_2}, 
k= \mathrm{ground field} 	\]
 	is 1-dimensional, hence  $U_{v_1}\otimes U_{w_1}+ U_{v_2}\otimes U_{w_2}$ is 7-dimensional.
 	Thus the image of $\phi_1^t+\phi_2^t$ is  7-dimensional, and consists exactly
 	of the forms vanishing on the 2 points
 	$Z_{v_1, w_2}\cap Z_{v_2, w_1}=(<v_1>, <w_2>)\cup(<v_2>, <w_1>)$.\par
 	Similarly, for a rank-3 element $\phi= \phi_1+\phi_2 +\phi_3$, the image of  $\phi^t$
 	is at least 8-dimensional, and since we know $\phi^t\circ\phi=0$, it follows that
 	$\ker(\phi^t)=<\phi>$ and moreover the  image of $\phi^t$ is base-point free.
 	
 		 \subsection{Principal parts and smooth zero-sets}\label{principal-parts}
 		 Compare (EGA IV 16.3.2,16.7.2.1).
 		 For a bundle $E$ on a smooth variety $X$, we denote by $P(E)$ its bundle
 		 of first-order principal parts,  defined as $p_{2*}p_1^*E$ where $p_1, p_2$
 		  		  are the (bijective) projections $\Spec(\O_{X\times X}/\I_\Delta^2)\to X, 
 		  		  \Delta=$ diagonal.
 		 Recall  that there is a canonical short exact sequence
 		  \begin{equation*}
 		  0\to E\otimes\Omega^1_{\bP^2\times\bP^2}\to P(E)\to E\to 0.
 		  \end{equation*}
 		 
%
 		 This sequence admits a canonical additive splitting (and in particular induces a surjection
 		 $H^0(P(E))\to H^0(E)$), where a section $\O\to E$ maps to the following section of
 		 $P(E)$, called its canonical lift:
 		 \[\O_X\to P(\O_X)=\O_X\oplus \Omega_X\to P(E)\]
 		 where the left map is $f\mapsto (f, df)$.\par
 		 In positive characteristic, a globally generated bundle need not have a smooth general zero-set
 		 (cf. \cite{hartshorne-rank2-p3}, Prop. 1.4).
 		 However the following criterion in terms of principal parts holds:
 		 \begin{lem}\label{principal-lem}
 		 Let $E$ be a vector bundle over a smooth variety $X$
 		 and $V\subset H^0(E)$  a finite-dimensional subspace whose canonical lift
 		 generates $P(E)$. 
 		 Then the zero-set of a general element of $V$
 		 is smooth.
 		 \end{lem}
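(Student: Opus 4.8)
The plan is to reduce the assertion to a transversality statement and then run a Bertini-type incidence argument, invoking the hypothesis on $P(E)$ exactly where a naive derivative argument would break down in positive characteristic. Write $n=\dim X$ and $r=\rk E$. First I would record how the canonical lift detects the derivative: for $s\in V$ with $s(x)=0$, the value $\tilde s(x)\in P(E)_x$ maps to $s(x)=0$ under $P(E)_x\to E_x$, hence lies in $(\Omega_X\otimes E)_x=\Hom(T_xX,E_x)$, where it is precisely the intrinsic derivative $d_xs$. By the Jacobian criterion (valid in any characteristic), $Z(s)$ is smooth of codimension $r$ at such an $x$ if and only if $d_xs$ is surjective. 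Thus $Z(s)$ fails to be smooth precisely when there is a point $x$ with $s(x)=0$ and $d_xs$ non-surjective.

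Next I would form the incidence variety $\Sigma=\{(x,s)\in X\times V:s(x)=0\}$. Composing the surjection $\tilde V\otimes\O_X\to P(E)$ with the quotient $P(E)\to E$ shows that $V$ generates $E$, so the fibre $\Sigma_x=\ker(V\to E_x)$ has constant dimension $\dim V-r$; hence $\Sigma$ is the total space of a vector subbundle of $X\times V$, smooth and irreducible of dimension $\dim X+\dim V-r$. Let $\Sigma_{\sing}\subset\Sigma$ be the closed locus where $d_xs$ is not surjective. By the previous paragraph the sections with singular zero-set are exactly the image of the second projection $\Sigma_{\sing}\to V$, so it suffices to prove $\dim\Sigma_{\sing}<\dim V$.

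The crux is a fibrewise surjectivity that is the one genuine use of the hypothesis. Fixing $x$, the evaluation $V\to P(E)_x$ is onto, and $\Sigma_x$ is exactly the preimage of the subspace $(\Omega_X\otimes E)_x\subset P(E)_x$; therefore the restriction $\Sigma_x\to(\Omega_X\otimes E)_x=\Hom(T_xX,E_x)$, $s\mapsto d_xs$, is again surjective. The non-surjective homomorphisms in $\Hom(T_xX,E_x)$ form a generic determinantal locus of codimension $n-r+1$, whose preimage under a linear surjection has the same codimension $n-r+1$ in $\Sigma_x$. Hence every fibre of $\Sigma_{\sing}\to X$ has dimension $\dim V-n-1$ (and strictly less when $n<r$, where the whole fibre is bad), so that $\dim\Sigma_{\sing}\le\dim V-1$.

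Since the image of a morphism has dimension at most that of its source, $\overline{\mathrm{pr}_V(\Sigma_{\sing})}$ is a proper closed subset of $V$, and any $s$ in its complement has smooth zero-set, which proves the lemma. The main obstacle is conceptual rather than computational: in characteristic $p$ the ordinary differential of a section may degenerate, so the entire argument must be phrased inside $P(E)$, and the real content is seeing that the generation hypothesis forces the fibrewise surjection $\Sigma_x\twoheadrightarrow\Hom(T_xX,E_x)$ — once that is established, the determinantal dimension count is routine.
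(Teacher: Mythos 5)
Your proof is correct and is essentially the paper's own argument: the paper defines a section to be degenerate at a point $p$ when it vanishes there with non-maximal-rank derivative in $(\Omega_X\otimes E)_p$, uses generation of $P(E)$ by $V$ to see this is a codimension-$(1+\dim X)$ condition in $V$ for fixed $p$, and lets $p$ vary over $X$ to conclude the degenerate locus is proper — exactly your count $\dim\Sigma_{\sing}\le\dim V-1$. Your write-up merely makes explicit what the paper leaves implicit, namely the incidence variety $\Sigma$ and the linear-algebra surjectivity $\Sigma_x\twoheadrightarrow\Hom(T_xX,E_x)$ supplied by the hypothesis.
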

 		  \begin{proof}
 		 Let us say that a section of $P(E)$ is degenerate at a point $p\in X$
 		 if the image in E vanishes at p and the induced element of $\Omega\otimes E$ at $p$
 		 is a degenerate tensor (corresponds to a matrix of non-maximal rank).
 		 If $P(E)$ is generated by $V$, the set of sections in $V$ degenerate at a given point $p\in X$ has codimension
 		 at least $1+dim(X)$, hence the set of sections degenerate somewhere is a proper subset. 
 		 Therefore the zero set of the image in $E$ of a nowhere degenerate section is smooth.  
 		 \end{proof}
 		 \begin{rem}
 		 The Lemma implies the usual Bertini Theorem that the general hyperplane section of a smooth 
 		 projective variety $X$ is smooth. This is because
 		 \[P(\O_{\P^n}(1))=H^0(\O_{\P^n}(1))\otimes \O.\]
 		This sheaf is globally generated, hence so is its quotient $P_X(\O_X(1))$.
 		 \end{rem}
 		 
 		  \subsection{Supersingular plane cubics}
 		 
 		 Recall that an elliptic curve $C$ over a field of positive characteristic is called \emph{supersingular}, if the
 		 action of Frobenius on $H^1(\cO_C)$ is zero.
 		 Supersingularity of elliptic curves can be detected from the equation in a
 		 plane embedding as a cubic curve. In the following we need an analogous result 
 		 for arbitrary subschemes defined by a cubic equation. The proof from
 		 \cite[I\!V~4.21]{hart} applies to this
 		 more general situation without modification and yields the next result.
 		 
 		 \begin{prop}\label{prop16}
 		 Let $X\subset\bP^2=\Proj (k[x,y,z])$ be the subvariety defined by a homogeneous cubic
		 equation $f(x,y,z)=0$ in characteristic $2$. Then the morphism 
 		 $F^*\colon H^1(X,\cO_X)\to H^1(X,\cO_X)$
 		 induced by Frobenius is $0$ if and only if the coefficient of $xyz$ in $f$ is $0$.
 		 \end{prop}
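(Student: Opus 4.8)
The plan is to reduce the whole question to a \v{C}ech computation on $\bP^2$ and to identify $F^*$ with multiplication by the classical Hasse invariant of $f$; the point is that nothing in this computation uses smoothness, reducedness, or irreducibility of $X$, only that $f\neq 0$, which is exactly why the proof of \cite[I\!V~4.21]{hart} carries over verbatim. First I would write down the resolution
\[\exseq{\cO_{\bP^2}(-3)}{\cO_{\bP^2}}{\cO_X},\]
with first map multiplication by $f$ (injective as $f\neq 0$). Since $H^1(\bP^2,\cO_{\bP^2})=H^2(\bP^2,\cO_{\bP^2})=0$, the connecting map $\delta$ is an isomorphism $H^1(X,\cO_X)\xrightarrow{\ \sim\ }H^2(\bP^2,\cO_{\bP^2}(-3))$, a one-dimensional space with \v{C}ech generator $\eta=[1/(xyz)]$ relative to the standard cover $\{x\neq0\},\{y\neq0\},\{z\neq0\}$.

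Next I would realize the absolute Frobenius $F^\#\colon\cO_X\to\cO_X$, $\bar g\mapsto\bar g^{\,2}$, through the resolution. Let $\Phi$ denote the Frobenius-semilinear $p$-th power map. The square whose vertical arrows are $\Phi$ on $\cO_{\bP^2}(-3)$ and on $\cO_{\bP^2}$ and $F^\#$ on $\cO_X$ commutes with the multiplication-by-$f$ maps, landing in the resolution of $X^{(p)}=V(f^{p})$; composing this with the evident map from the resolution of $X^{(p)}$ to that of $X$, given by multiplication by $f^{p-1}$, the identity, and the natural surjection $\pi\colon\cO_{\bP^2}/f^{p}\to\cO_{\bP^2}/f$, recovers $F^\#$ on the right. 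By functoriality of $\delta$, the operator $F^*$ on $H^1(X,\cO_X)$ then corresponds, under the isomorphism above, to $\psi:=(\cdot f^{p-1})\circ\Phi_*$ on $H^2(\bP^2,\cO_{\bP^2}(-3))$, where $\Phi_*\colon H^2(\cO_{\bP^2}(-3))\to H^2(\cO_{\bP^2}(-3p))$ is induced by $\Phi$ and $\cdot f^{p-1}$ is the multiplication map back to $H^2(\cO_{\bP^2}(-3))$.

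The concluding computation is then pure monomial bookkeeping. Since $\Phi_*$ raises \v{C}ech cocycles to their $p$-th power, $\Phi_*(\eta)=[1/(xyz)^{p}]$, whence $\psi(\eta)=[f^{p-1}/(xyz)^{p}]$. Writing $f^{p-1}=\sum c_{abc}\,x^ay^bz^c$, the summand $c_{abc}\,x^{a-p}y^{b-p}z^{c-p}$ is a \v{C}ech coboundary unless $a=b=c=p-1$, in which case it equals $c_{p-1,p-1,p-1}\,\eta$; thus $\psi(\eta)=A\,\eta$, with $A$ the coefficient of $(xyz)^{p-1}$ in $f^{p-1}$ (the Hasse invariant). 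As $H^1(X,\cO_X)$ is one-dimensional, $F^*=0$ iff $A=0$. Specializing to $p=2$ gives $f^{p-1}=f$ and $(xyz)^{p-1}=xyz$, so $A$ is simply the coefficient of $xyz$ in $f$, as claimed.

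I expect the main obstacle to be the careful bookkeeping in the middle step: verifying that the diagram of Frobenius-semilinear (non-$\cO_{\bP^2}$-linear) maps genuinely commutes and that the connecting homomorphism intertwines $F^*$ with $\psi$, all while correctly tracking $p$-linearity. One also needs the standard facts, used without comment above, that the absolute Frobenius acts on \v{C}ech cohomology by $p$-th powers of cocycles and that $H^2(\bP^2,\cO_{\bP^2}(-3))$ has the stated one-dimensional monomial description; granting these, everything else is the elementary computation displayed.
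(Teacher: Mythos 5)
Your proposal is correct and is essentially the paper's own proof: the paper simply cites \cite[I\!V~4.21]{hart} and asserts that Hartshorne's argument ``applies to this more general situation without modification,'' and what you have written out is exactly that argument --- the resolution $0\to\cO_{\bP^2}(-3)\xrightarrow{\,f\,}\cO_{\bP^2}\to\cO_X\to 0$, the identification $H^1(X,\cO_X)\cong H^2(\bP^2,\cO_{\bP^2}(-3))$, the lift of Frobenius via the $p$-th power map together with multiplication by $f^{p-1}$, and the monomial computation extracting the Hasse invariant. Your explicit observation that only $f\neq 0$ (not smoothness, reducedness, or irreducibility of $X$) is used is precisely the justification the paper leaves implicit.
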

 		 
 		 Note that degree $3$ polynomials in characteristic $2$ with vanishing $xyz$-term are 
 		 the image of $\big(F^*H^0(\cO_{\bP^2}(1))\big)\otimes H^0(\cO_{\bP^2}(1))$, hence can be described
 		 without reference to a particular choice of coordinates.
 		 
 		 \begin{example}\label{ex17}
 		 We can group the subvarieties defined by cubic equations as follows:
 		 \medskip
 		 
 		 {\par\noindent
 		 \begin{tabular}{l | l }
 		 $F^*=0$ &$F^*=id_X$ \\
 		 \hline
 		 $\bullet$ supersingular elliptic curve &$\bullet$ ordinary elliptic curve \\
 		 $\bullet$ cuspidal rational cubic & $\bullet$ nodal rational cubic  \\
 		 $\bullet$ conic and a line tangent to it & $\bullet$ conic and a line intersecting transversely  \\
 		 $\bullet$ three lines intersecting in a point & $\bullet$ three lines with three points of intersection \\
 		 $\bullet$ a double line and another line  & \\
 		 $\bullet$ a triple line  & \\
 		 \end{tabular}
 		 }
 		 \end{example}
 		 \medskip

\bigskip

\vfil\eject

 
\subsection{Review of results from \cite{p2p2-I}}
Let $\P^2_L, \P^2_h$ denote copies of the (polarized) projective plane
 in characteristic $p>0$ (in the present paper, $p=2$), let $\cA\subset\P^2_L\times \P^2_h$
 be a smooth divisor of class $L+h$, and let $F$ denote the Frobenius map.
 The rank-2 bundle $E$ on $\P^2_L\times\P^2_h$ is defined by the exact sequence
 \eqspl{e-via-degeneracy}{
 \exseq{E}{p_h^*F^*Q_h}{\O_\cA(pL)}
 } 
 Alternatively $E$ can be defined as the cohomology of the following monad 
 \eqspl{e-via-monad}{
 \exseq{\O}{Q_L\otimes Q_h}{\O(L+h)}
 }
 (Recall that a \emph{monad} is a complex of locally free sheaves of the form
 \[0\to A\to B\to C\to 0\]
 which is exact at $A$ and $C$, and such that the map $A\to B$
 embeds $A$ as a subbundle.)  

The total Chern class of $E$ is give by
\eqspl{chern}{
c(E)=1+3(L+h)+4L^2+5Lh+4h^2.
}

 \section{geometry of zero sets}\label{geometry}
  
  The next four sections investigate the zero sets of sections of $E$. Key results are:
 \begin{enumerate}
 \item
 The general zero set is nonsingular (Corollary \ref{cor29})
 \item
 the quantitative invariants of smooth zero sets, specifying them as
 nonclassical Enriques surfaces (Theorem \ref{thm43})
 \item
 an analysis of certain reducible zero sets (§ \ref{reducible-sec}) 
 \item
 a criterion to distinguish ordinary and supersingular (smooth) zero sets (Theorem \ref{thm49})
  \end{enumerate}
 \medskip
 
 Section \ref{moduli-sec} discusses moduli of $E$ and relates them to moduli of its zero sets,
 non-classical Enriques surfaces.
 
 The final section \ref{embeddings} shows how to recover the monad from an embedded
 nonclassical Enriques surface of bidegree $(4,4)$.
 
  \subsection{Minimal zero sets}
  
  By the results of \cite{p2p2-I}, § 3, $E(-L)$ and $E(-h)$ are minimal twists with nonzero sections.
  They satisfy $h^0=3$, $h^1=0$ and every zero-scheme is two-dimensional. The analysis
  of their zero-schemes is the same in both cases, and we will discuss below only $E(-L)$.
  
  If $s\in H^0(E(-L))$ has zero scheme $Y$, then $Y$ has class
  \begin{equation*}
  [Y] = 2L^2+2Lh+4h^2,\quad \omega_Y=\cO_Y(-2L)
  \end{equation*}
  
  \begin{prop}\label{prop24}
  Let $s\in H^0(E(-L))$ be a section with zero scheme $Y$. Then $Y=Y_1\cup Y_2$ where:
  \begin{enumerate}
  \item[\textup{(1)}]
   $Y_1=2Z_1\subset\cA$ is a double structure on $Z_1=\pi_2^*(L_1)$ for some line $L\subset\bP^2_L$.
  \item[\textup{(2)}]
   $Y_2=\bP^2\times N_2(p)$ is a multiplicity-4 structure where $N_2(p)$ is the infinitesimal
  neighborhood of a point $p\in\bP^2$ of length 4 with ideal $(x^2,y^2)$ \emph{(}where $p$ is defined by 
  the vanishing of two local coordinates $x,y$\emph{)}.
  \end{enumerate}
  \end{prop}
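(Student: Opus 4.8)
The plan is to show that every nonzero section of $E(-L)$ arises from a Frobenius-pulled-back section on the second factor, and then to read off the zero scheme by separating the behaviour off and along the divisor $\cA$. First I would reduce to the second factor: twisting the sequence of Corollary~\ref{cor111} by $h$ and using the identification $R=(F^*Q_h)(-h)$ from §\ref{image} gives a short exact sequence
\[
0\to pr_2^*F^*Q_h\xrightarrow{\ j\ } E(-L)\to \cO_\cA(-L+3h)\to 0 .
\]
Since $\pi_{2*}\cO_\cA(-L)=\pi_{2*}\cO_{\P(Q_h)}(-1)=0$, the quotient has no global sections, so $H^0(pr_2^*F^*Q_h)\xrightarrow{\sim}H^0E(-L)$, both of dimension $3$. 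Over an algebraically closed field every nonzero section of $F^*Q_h$ is $F^*v$ for a unique $v\in V_h$; hence any nonzero $s$ equals $j(\sigma)$ with $\sigma:=pr_2^*F^*v$.

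Next I would extract $Y_2$. Off $\cA$ the map $j$ is a bundle isomorphism, so $Y\setminus\cA=Z(\sigma)\setminus\cA$. The form $v$ vanishes at one reduced point $q\in\P^2_h$ with ideal $\mathfrak m_q=(x,y)$, and $Z(F^*v)=F^{-1}(q)$. In characteristic $2$ Frobenius is $t\mapsto t^2$ on coordinates, so $F^{-1}(q)=V(x^2,y^2)=N_2(q)$, of length $4$. Thus $Z(\sigma)=\P^2_L\times N_2(q)$, giving the asserted $Y_2$ (with $p=q$), of class $4h^2$.

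Then I would extract $Y_1$ by restricting to $\cA$. By Corollary~\ref{cor25} (twisted by $-L$) one has $E(-L)|_\cA=\cO_\cA(2L)\oplus\cO_\cA(-L+3h)$, with $j|_\cA$ surjecting onto the first summand via the canonical map, which is $F^*\psi$ in the notation of Theorem~\ref{thm22}; consequently the $\cO_\cA(-L+3h)$-component of $s|_\cA$ vanishes identically and the $\cO_\cA(2L)$-component is $\bar\sigma:=F^*\psi(\sigma|_\cA)$. The crucial point is that $\bar\sigma$ is a perfect square: since $\sigma|_\cA=F^*(\pi_2^*v)$ and $F^*\psi$ is the Frobenius pullback of the canonical $\psi\colon\pi_2^*Q_h\to\cO_\cA(L)$, we get $\bar\sigma=F^*(\bar v)=\bar v^{\,2}$, where $\bar v:=\psi(\pi_2^*v)\in H^0(\cO_\cA(L))$. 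Because $\cA\in|L+h|$ one has $H^0(\cO_\cA(L))=V_L^*$, so $\bar v$ (nonzero, as $H^0(\cO_\cA(h-L))=0$) is the restriction of a linear form cutting out a line $L_1\subset\P^2_L$, and $Z(\bar v)=\pi_1^{-1}(L_1)=:Z_1$. As $s|_\cA=(\bar v^2,0)$, the section $s$ vanishes along the divisor $Z(\bar v^2)=2Z_1$ of $\cA$, which is $Y_1=2Z_1$, of class $2L(L+h)=2L^2+2Lh$.

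Finally I would assemble $Y=Y_1\cup Y_2$. The two cycles carry total class $2L^2+2Lh+4h^2$, which equals $c_2(E(-L))=c_2(E_0(h))$ computed from \eqref{ce0}; since $Y$ is the zero scheme of a section of a rank-$2$ bundle and these pieces already exhaust its second Chern class, there are no further two-dimensional components, and off $\cA$ we saw $Y=Y_2$ exactly. I expect the main obstacle to be the purely scheme-theoretic part of the statement, namely that the double structure $2Z_1$ inside $\cA$ and the length-$4$ fibre structure $\P^2_L\times N_2(q)$ fit together with no spurious embedded components along the one-dimensional overlap $Z_1\cap(\P^2_L\times\{q\})$. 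I would settle this by a local calculation: in a frame near $\cA$ the two coordinates of $s$ take the form $\bar v^2+t\,(\cdots)$ and $t\,\sigma_2$, with $t$ a local equation of $\cA$, and tracking the Frobenius square shows their common zero scheme is $2Z_1\cup(\P^2_L\times N_2(q))$; the class identity above then forces equality and excludes embedded points.
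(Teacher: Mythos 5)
Your proposal is correct and takes essentially the same route as the paper: both arguments use the elementary-modification sequence $0\to pr_2^*F^*Q_h\to E(-L)\to \cO_\cA(3h-L)\to 0$ to identify sections of $E(-L)$ with Frobenius pullbacks $F^*v$, read off $Y\cap\cA=2Z_1$ from the fact that the $\cO_\cA(2L)$-component of $s|_\cA$ is the square $\bar v^{\,2}$, and recognize $Y_2$ as the zero scheme $\P^2_L\times N_2(p)$ of $F^*v$. Your extra care at the assembly stage goes beyond what the paper records (it simply asserts $Y=2Z_1+Y_2$); note that your sketched local computation can be bypassed, since $Y$ is a codimension-$2$ zero scheme of a rank-$2$ bundle, hence a Cohen--Macaulay local complete intersection with no embedded components, so the containment $Y\supseteq Y_1\cup Y_2$ together with the cycle-class identity already forces $Y=Y_1\cup Y_2$.
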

  \begin{proof}
  First of all, the map $F^*Q\to E(-L)$ induces an isomorphism on sections, and moreover, we have
  $E(-L)\otimes\cO_\cA\cong \cO(2L)\oplus\cO(3h-L)$. Hence the sections of $F^*Q$ restrict on
  $\cA$ to the squares in $H^0(\cO_\cA(2L))$.
  This means that $Y$ must intersect $\cA$ in a surface of the form $2Z_1$ where $Z_1=pr_2^{-1}(L_1)$
  for some line $L\subset \bP^2_L$. Note that $Z_1$ is isomorphic to $\bP(\cO(1)\oplus\cO)$, a
  rational ruled surface of type $F_1$, and moreover
  \begin{equation*}
  Y=2Z_1+Y_2
  \end{equation*}
  where $\vert Y_2\vert = 4h^2=4[\bP^2_L\times\{x\}]$. In fact, in view of the exact sequence
  \begin{equation*}
  0\to F^*Q_h\to E(-L)\to\cO_\cA(3h-L),
  \end{equation*}
  $Y_2$ is also a zero-set of $F^*Q$, and we conclude that $Y_2$ is a multiplicity-$4$ structure
  of the form $\bP^2_L\times N_2(p)$ where $N_2(p)$ is an infinitesimal neighborhood of a point
  $p\in\bP^2_h$ with ideal of the form $(x^2,y^2)$.
  \end{proof}
 Next an easy consequence of the minimality of $E(-L), E(-h)$:
  \begin{cor}\label{multi-cor}
  The map of sections
  \begin{equation*}
  \psi\colon\Big(H^0(E(-L))\otimes H^0(\cO(L))\Big)
  \oplus
  \Big(H^0((E(-h)))\otimes H^0(\cO(h))\Big)
  \to H^0(E)
  \end{equation*} 
  is injective, and its image is $18$-dimensional.
  \end{cor}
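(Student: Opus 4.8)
The plan is to reduce the claim to a statement about restriction to the divisor $\cA$, where $E$ splits. First I would observe that the two assertions are equivalent: by the cohomology computations of \S\ref{cohomology} the source has dimension $h^0E(-L)\cdot h^0\cO(L)+h^0E(-h)\cdot h^0\cO(h)=3\cdot3+3\cdot3=18$, so injectivity of $\psi$ is the same as its image being $18$-dimensional. It therefore suffices to prove $\ker\psi=0$.

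The key device is Corollary \ref{cor25}, which gives the splitting $E\otimes\cO_\cA\cong\cO_\cA(3L)\oplus\cO_\cA(3h)$. I would write a general element of the source as $\alpha+\beta$ with $\alpha\in H^0E(-L)\otimes H^0\cO(L)$ and $\beta\in H^0E(-h)\otimes H^0\cO(h)$. As shown in the proof of Proposition \ref{prop24}, the sections of $E(-L)$ restrict on $\cA$ into the summand $\cO_\cA(2L)$ of $E(-L)\otimes\cO_\cA=\cO_\cA(2L)\oplus\cO_\cA(3h-L)$ (as squares); multiplying by a section of $\cO(L)$ and restricting therefore carries $\psi(\alpha)$ into the summand $\cO_\cA(3L)$, and symmetrically $\psi(\beta)|_\cA$ lands in $\cO_\cA(3h)$. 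Since these summands are complementary, restriction to $\cA$ sends $\psi$ diagonally into $H^0\cO_\cA(3L)\oplus H^0\cO_\cA(3h)$, so it is enough to prove separately that the two multiplication maps $\rho_L\colon H^0E(-L)\otimes H^0\cO(L)\to H^0\cO_\cA(3L)$ and $\rho_h$ are injective: if $\psi(\alpha+\beta)=0$ then $\rho_L(\alpha)=\rho_h(\beta)=0$, forcing $\alpha=\beta=0$.

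To handle $\rho_L$ I would pass through the two restriction maps to $\cA$. Using the coordinates $\bP^2\times\bP^2=\Proj[a,b,c,x,y,z]$ with $\cA=\{ax+by+cz=0\}$, the image of $H^0E(-L)$ in $H^0\cO_\cA(2L)$ is the space of squares $\langle a^2,b^2,c^2\rangle$ and the image of $H^0\cO(L)$ in $H^0\cO_\cA(L)$ is $\langle a,b,c\rangle$; both restriction maps are injective because $H^0E(-2L-h)=0$ by Theorem \ref{coh_groups} and $H^0\cO(-h)=0$. The multiplication then sends the nine generators to the nine cubic monomials in $a,b,c$ other than $abc$, and these stay linearly independent in $H^0\cO_\cA(3L)$ since the restriction $H^0\cO(3L)\to H^0\cO_\cA(3L)$ is itself injective ($H^0\cO(2L-h)=0$). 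Hence $\rho_L$ has rank $9$ and is injective; the identical argument in the variables $x,y,z$ gives injectivity of $\rho_h$, and combining the two yields $\ker\psi=0$.

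The main obstacle, and the point on which everything turns, is the separation in the second paragraph: one must be certain that the $L$-sections and the $h$-sections restrict to genuinely complementary summands of $E\otimes\cO_\cA$ and that nothing is lost upon restricting to $\cA$. The first is exactly the content of Corollary \ref{cor25} (the splitting) together with the ``squares'' description in the proof of Proposition \ref{prop24}; the second rests on the vanishings $H^0E(-2L-h)=H^0\cO(-h)=H^0\cO(2L-h)=0$ read off from the cohomology table. Once these are secured, the remaining step is only the elementary monomial count above, so I expect the write-up to be short.
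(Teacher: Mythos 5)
Your proposal is correct and follows essentially the same route as the paper's own proof: restrict to $\cA$ (justified by cohomology vanishings), invoke the splitting $E\otimes\cO_\cA\cong\cO_\cA(3L)\oplus\cO_\cA(3h)$ of Corollary \ref{cor25} together with the ``squares'' description from Proposition \ref{prop24}, and conclude by the complementarity of the two summands plus the independence of the nine non-$abc$ (resp.\ non-$xyz$) cubic monomials. The only difference is one of bookkeeping -- the paper restricts $H^0E\to H^0(E\otimes\cO_\cA)$ once using $H^0E(-L-h)=0$ and declares injectivity ``clear,'' whereas you restrict each factor separately (via $H^0E(-2L-h)=H^0\cO(-h)=H^0\cO(2L-h)=0$) and spell out the monomial count explicitly.
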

  \begin{proof}
  In principle the statement can be read off from the computer calculations of the Corollary
  in the appendix to \cite{p2p2-I}, but we would like to give an independent proof.
  
  Since $H^0(E(-L-h))=0$, the restriction of sections
  $H^0(E) \to H^0(E\otimes\cO_\cA)$ is injective, and it suffices to
  validate the statement on $\cA$.
  
  By \cite{p2p2-I}, \S 2.5, sections of $E\otimes\cO_\cA$ correspond to
  sections of $\cO_{\bP^2}(3L)\oplus\cO_{\bP^2}(3h)$. Proposition \ref{prop24} 
  implies that the sections of $E(-L)$ correspond to the squares
  of linear polynomials in $H^0\cO(2L)$, hence the sections in
  $H^0(E(-L))\otimes H^0(\cO(L))$ map to cubic
  polynomials in $H^0( \cO(3L))$ without an $abc$-term.
  Similarly, sections of $H^0(E(-h))\otimes H^0(\cO(h))$
  map to cubic polynomials in $H^0 (\cO(3h))$ without
  an $xyz$-term.
  
  Injectivity of $\psi$ is now clear, and the dimension of the image can be read off
  from the known dimensions of the terms on the left.
  \end{proof}
  
  \begin{rem}
  We have inclusions
  \begin{equation*}
  \im(\psi)\subset H^0(E) \subset H^0(E\otimes\cO_\cA)\cong H^0(\cO(3L))\oplus H^0(\cO(3h))
  \end{equation*}
  of codimension $1$ vector spaces.
  Let $(s_L,s_h)\in H^0(\cO(3L))\oplus H^0(\cO(3h))$ be a section.
  
  The space on the left includes all sections $s_L, s_h$ where the corresponding polynomials
  have no terms $abc$, $xyz$.
  
  We will see later in \S \ref{ss-sec} that a 
  section $(s_L,s_h)\in H^0(E)$ but not in the image of $\psi$
  must have non-vanishing $abc$- and $xyz$-terms. 
  \end{rem}
 
\subsection{Sections of $E$}

 We start our investigation of the sections by considering a subset of them, namely those in the
 image of
 \begin{equation*}
 \psi\colon \Big(H^0(E(-L))\otimes H^0(\cO(L))\Big)
 \oplus
 \Big(H^0(E(-h))\otimes H^0(\cO(h))\Big)
 \to H^0E
 \end{equation*} 
 
 \begin{thm}\label{thm41}
 The general section in the image of $\psi$ has a smooth zero scheme.
 \end{thm}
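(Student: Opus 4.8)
The plan is to apply the principal-parts criterion of Lemma \ref{principal-lem} to the subspace $V=\im(\psi)\subset H^0(E)$: it suffices to show that the canonical lifts of the sections in $V$ generate the jet bundle $P(E)$ at every point of $X=\bP^2_L\times\bP^2_h$. The geometric mechanism behind the theorem is that, via the surjection $(pr_1^*F^*Q_L)(h)\oplus(pr_2^*F^*Q_h)(L)\to E$ of Corollary \ref{cor25}, the sections of $E(-L)$ and $E(-h)$ are Frobenius pullbacks and therefore have vanishing differential; multiplying such a section $s$ by an honest linear form $\ell\in H^0(\cO(L))$ produces a section $\ell s$ whose first-order part is $s\otimes d\ell$, so that the linear factor \emph{restores} the derivative that Frobenius had killed. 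Summing $L$-type and $h$-type products then supplies first-order data in both factor directions, which is exactly why one works with $\im(\psi)$ rather than with $H^0(E(-L))$ alone.

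First I would treat the open locus $X\setminus\cA$. There the inclusions $pr_2^*F^*Q_h\hookrightarrow E(-L)$ and $pr_1^*F^*Q_L\hookrightarrow E(-h)$ coming from Corollary \ref{cor111} (and its symmetric counterpart) are isomorphisms, and since $Q_h,Q_L$, hence their Frobenius pullbacks, are globally generated, the sections $s\in H^0(E(-L))$ generate $E(-L)$ and the $t\in H^0(E(-h))$ generate $E(-h)$ pointwise. Consequently, at $x\notin\cA$ the values already span $E_x$, while for $\ell$ vanishing at $x$ the jets $j^1(\ell s)|_x=s(x)\otimes d\ell|_x$ sweep out $\Omega_{\bP^2_L}\otimes E_x$ and the jets $j^1(mt)|_x$ sweep out $\Omega_{\bP^2_h}\otimes E_x$. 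Together with the values this is all of $P(E)_x$, so the canonical lifts generate $P(E)$ away from $\cA$.

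The substance of the proof is generation along $\cA$, and this is the step I expect to be the main obstacle. Here I would use the splitting $E\otimes\cO_\cA\cong\cO_\cA(3L)\oplus\cO_\cA(3h)$ of Corollary \ref{cor25}, under which a section of $\im(\psi)$ restricts to a pair of cubics $(\tau_L,\tau_h)$, each a sum of products linear-times-square. The tangential first-order data along $\cA$ is supplied by the one-jets of these cubics; the crucial point, in contrast with the \emph{squares} that generate $H^0(E(-L))$ on $\cA$ (Proposition \ref{prop24}), whose differentials vanish in characteristic $2$, is that cubics have nonvanishing differential, e.g. $d(a^3)=a^2\,da\neq0$, so the tangential directions $\Omega_\cA\otimes E_x$ are covered. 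What remains is the single conormal direction $N^*_\cA\otimes E_x$, which I would extract from the transverse behaviour of $\mu\colon pr_2^*F^*Q_h\hookrightarrow E(-L)$: its cokernel is a line bundle on $\cA$, so in a local frame across $\cA=\{w=0\}$ the map is $\mathrm{diag}(1,w)$, and a section $s=\mu(\hat s)$ with $\hat s=(p,r)$ has normal derivative $\partial_w s|_\cA=(\partial_w p|_0,\,r|_0)$ whose component in the complementary summand $\cO_\cA(3h)$ is generically nonzero. Thus, for a section $\ell s+\cdots$ arranged to vanish at $x\in\cA$ with $\ell(x)\neq0$, the normal derivative acquires a nonzero $\cO_\cA(3h)$-component, and symmetrically the $t$-terms supply the $\cO_\cA(3L)$-component; together they span $N^*_\cA\otimes E_x$.

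Having shown that the canonical lifts of $\im(\psi)$ generate $P(E)$ at every point, Lemma \ref{principal-lem} then applies verbatim: the locus of sections degenerate at a fixed point has codimension at least $1+\dim X=5$ in the $18$-dimensional space $\im(\psi)$, so the sections degenerate somewhere form a proper subvariety and the general member of $\im(\psi)$ has smooth zero scheme. The one place where genuine care is needed, and which I would write out in full, is the normal-derivative computation on $\cA$: one must check that the rank-one degeneration of $F^*Q_h\hookrightarrow E(-L)$ and of $F^*Q_L\hookrightarrow E(-h)$ really does produce independent conormal jets in the two summands of $E\otimes\cO_\cA$, since this is precisely the first-order information that the vanishing Frobenius differential fails to provide directly.
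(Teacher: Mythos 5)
Your reduction outside $\cA$ is sound and is essentially the paper's own Case 2: there $pr_1^*R_L\to E$ and $pr_2^*R_h\to E$ are isomorphisms, Frobenius--pullback sections supply the values, multiplication by linear forms restores the first--order data killed by Frobenius, and Lemma \ref{principal-lem} applies. The genuine gap is on $\cA$, and it sits exactly in the step you flagged as delicate --- but the failure is in your \emph{tangential} claim, not the normal one. You assert that because cubics such as $a^3$ have $d(a^3)=a^2\,da\neq 0$, the jets of the restricted sections cover $\Omega^1_\cA\otimes E_x$. This is false. By Corollary \ref{cor25} the $\cO_\cA(3L)$--component $\tau_L$ of the restriction of any section in $\im(\psi)$ is the restriction of a cubic pulled back from $\bP^2_L$ (indeed $H^0(\cO_\cA(3L))\cong H^0(\cO_{\bP^2_L}(3))$, since $H^0$ and $H^1$ of $\cO(2L-h)$ vanish), so $d\tau_L$ lies in the rank--$2$ subbundle $pr_1^*\Omega^1_{\bP^2_L}\subset\Omega^1_\cA$: it annihilates the ruling direction of $\cA\cong\bP(Q)$ over $\bP^2_L$. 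Symmetrically $d\tau_h$ annihilates the ruling over $\bP^2_h$, and the $h$--side sections restrict into the other summand, so they cannot repair this. Hence for sections of $\im(\psi)$ vanishing at $x\in\cA$ the tangential derivatives span only the $4$--dimensional subspace $pr_1^*\Omega^1_{\bP^2_L}\otimes\cO_\cA(3L)\oplus pr_2^*\Omega^1_{\bP^2_h}\otimes\cO_\cA(3h)$ of the $6$--dimensional $\Omega^1_\cA\otimes E_x$; your normal--derivative computation (which is essentially correct) adds at most the $2$ dimensions of $N^*_\cA\otimes E_x$, so the image in $\Omega^1_X\otimes E_x$ has dimension at most $6<8$. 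Thus $P(E)$ is \emph{not} generated along $\cA$ by the canonical lifts of $\im(\psi)$ --- in fact not even by those of all of $H^0(E)$, which gains only one more dimension --- and Lemma \ref{principal-lem} cannot be invoked on all of $X$ as your plan requires.

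This is precisely why the paper splits into two cases and treats $x\in\cA$ by a different mechanism. Using the splitting $E\otimes\cO_\cA\cong\cO_\cA(3L)\oplus\cO_\cA(3h)$, a section of $\im(\psi)$ restricts to a pair of cubics $(D_L,D_h)$ lying in the systems $F^*H^0\cO(L)\otimes H^0\cO(L)$ and $F^*H^0\cO(h)\otimes H^0\cO(h)$ on the two factors; Kleiman's Bertini--type theorem for a (base--point--free)$\otimes$(very ample) system makes the general $D_L$ and $D_h$ smooth plane cubics, and then $Y\cap\cA=(D_L\times D_h)\cap\cA$ is, for general independent choices (after a transversality check), a smooth curve. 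Smoothness of this one--dimensional scheme--theoretic intersection with the hypersurface $\cA$ forces $\dim T_xY\le 2$ at every $x\in Y\cap\cA$, i.e.\ $Y$ is smooth along $\cA$ even though jet--generation fails there: generation of $P(E)$ is sufficient for Lemma \ref{principal-lem} but far from necessary for the theorem. The repair of your argument is therefore not to push jet--generation onto $\cA$, but to replace it on $\cA$ by this restricted Bertini argument.
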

 \begin{proof}
 This is a local question, and we will treat the cases $x\in\cA$ and $x\not\in\cA$ separately.
 
 \textbf{Case 1} ($x\in\cA$):
 On $\cA$, $E$ splits as $\cO_\cA(3L)\oplus\cO_\cA(3h)$, and the sections in the image
 of $\psi$ correspond to those in the direct sum of $F^*H^0(\cO(L))\otimes H^0(\cO(L))$
 and $F^*H^0(\cO(h))\otimes H^0(\cO(h))$.
 
 Regarding the first summand, we know that the linear system corresponding to
 $F^*H^0(\cO(L))$ is basepointfree, while $H^0(\cO(L))$ is very ample, hence by
 \cite[3.5]{Klei} the general section in $F^*H^0(\cO(L))\otimes H^0(\cO(L))$ is nonsingular on $\bP^2$.
 A similar argument holds for the second summand, and taken together they
 imply that the general section of $E$ is nonsingular on $\cA$.
 
 \textbf{Case 2} ($x\not\in\cA$):
 We claim that the stalks of $P(E)$ outside $\cA$ are generated by pull-back of sections of 
 $R_L$ ($:= pr_{1,*}E_0$) 
 and $R_h$ ($:= pr_{2,*}E_0$) via
 \begin{equation*}
 pr_1^*P(R_L)\oplus pr_2^*P(R_h) \to P(pr_1^*R_L)\oplus P(pr_2^*R_h) \to P(E).
 \end{equation*}
 If this holds, then we can apply Lemma \ref{principal-lem} to conclude
 that the general section of $E$ will be nonsingular outside $\cA$.
 
 Regarding the claim, recall \S \ref{principal-parts} that there is a canonical short exact sequence
 \begin{equation*}
 0\to E\otimes\Omega^1_{\bP^2\times\bP^2}\to P(E)\to E\to 0
 \end{equation*}
 where, for any $\O_X$-module $E$, $P(E)$ is defined as $p_{2*}p_1^*E$ where $p_1, p_2$
 are the (bijective) projections $\Spec(\O_{X\times X}/\I_\Delta^2)\to X$.
The sequence is split as a sequence of sheaves of abelian groups. The splitting is defined
 by sending a local section $s$ to $p_1\inv s$ where $p_1$  as above  is the 
projection.\par 

The cotangent bundle of $\bP^2\times\bP^2$ splits as
 $\Omega^1_{\bP^2\times\bP^2}\cong pr_1^*\Omega^1_{\bP^2}\oplus pr_2^*\Omega^1_{\bP^2}$
 (EGA IV 16.4.23), hence we conclude that
 \begin{equation*}
 P(E)\cong (E\otimes pr_1^*\Omega^1_{\bP^2})\oplus (E\otimes pr_2^*\Omega^1_{\bP^2})\oplus E
 \end{equation*}
 as sheaves of abelian groups.  
 
Since the injections $pr_1^*R_L\to E$ (resp. $pr_2^*R_h\to E$) are isomorphisms outside $\cA$
(the cokernel is supported on $\cA$), this decomposition for $P(E)$ holds outside $\cA$ 
 also, if $E$ is replaced by $pr_1^*R_L$ or by $pr_2^*R_h$ on the right side.

%

Finally, consider the bundle $R_L=F^*Q\otimes\cO_{\bP^2}(1)$. 
$F^*Q$ is globally generated on $\bP^2$, and
 $\cO_{\bP^2}(1)$ is very ample. Therefore the canonical lifts (see Lemma 3) of the sections of
 $R_L=F^*Q\otimes\cO(1)$ generate the stalks of the bundle
 $P(R_L)\cong (R_L\otimes\Omega^1_{\bP^2})\oplus R_L$ (split as sheaves of abelian groups)
 of principal parts of $R_L$ on $\bP^2$.
 
 Hence the pullback of sections under the map
 \begin{equation*}
 pr_1^*P(R_L)\to P(pr_1^*R_L)\to pr_1^*(R_L\otimes\Omega^1_{\bP^2})\oplus pr_1^*(R_L)
 \end{equation*}
 generates the stalks of the sheaf of the right outside $\cA$, and the same holds for the analogous pullback
 of $R_h$ from the second factor.
 
 Taken together, these establish our claim.
 \end{proof}
 
 \begin{cor}\label{cor29}
 The zero scheme of a general section of $E$ is smooth.
 \end{cor}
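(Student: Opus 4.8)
The plan is to deduce the Corollary from Theorem \ref{thm41} by a routine openness-and-irreducibility argument; essentially all the work has already been done, and the only role of the Corollary is to upgrade ``general in the $18$-dimensional subspace $\im(\psi)$'' to ``general in the full $19$-dimensional space $H^0(E)$''.

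First I would observe that smoothness of the zero scheme is an open condition on the parameter space $\bP(H^0(E))$. Concretely, let $Z\subset \bP(H^0(E))\times(\bP^2\times\bP^2)$ be the universal zero locus, with its projection $\pi\colon Z\to\bP(H^0(E))$. Since $\bP^2\times\bP^2$ is proper, $\pi$ is proper, so the image under $\pi$ of the (closed) locus where $\pi$ fails to be smooth of relative dimension $2$ is closed; its complement $U\subset\bP(H^0(E))$ is the open set of sections whose zero scheme is a smooth surface.

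Next I would invoke Theorem \ref{thm41}: a general element of $\im(\psi)\subset H^0(E)$ has smooth zero scheme, and such an element is in particular an honest section of $E$. Hence $U$ is nonempty. Because $\bP(H^0(E))$ is irreducible, the nonempty open set $U$ is dense, and therefore a general section of $E$ lies in $U$, i.e. has smooth zero scheme.

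There is really no obstacle here: the substantive geometric content is entirely contained in Theorem \ref{thm41}, and the passage from a proper linear subspace to the whole space works precisely because smoothness propagates from a single good section to all nearby sections. The one point worth stating explicitly is that $\im(\psi)$, though a proper subspace of $H^0(E)$, does meet the open locus $U$, which is exactly what Theorem \ref{thm41} guarantees.
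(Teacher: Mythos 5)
Your proof is correct and takes essentially the same approach as the paper: the paper's own proof likewise notes that nonsingularity of the zero scheme is a generic (open) property on $\bP(H^0(E))$ and then invokes Theorem \ref{thm41} to exhibit one smooth member inside $\im(\psi)$. Your explicit universal-family/properness argument is simply a spelled-out version of the paper's one-line appeal to genericity, so the two proofs coincide in substance.
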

 \begin{proof}
 Nonsingularity of a section is a generic property. We proved in Theorem \ref{thm41} that
 there are sections with a smooth zero scheme in the image of $\psi$. Hence the
 same holds for the general section, and the general section is not in the image of $\psi$.
 \end{proof}

 \section{Enriques surfaces}\label{enriques}

 \subsection{Enriques surface basics}
 General reference for Enriques surfaces: \cite{D}.\par
 
 Within the Enriques-Castelnuovo classification of complex algebraic surfaces,
 Enriques surfaces are part of the surfaces of Kodaira dimension $0$ (equivalently, with trivial or torsion canonical bundle),
 alongside  abelian, K3 and hyperelliptic (sometimes also called bielliptic) surfaces.
 They are characterized by $q=p_g=0$ and $K^{\otimes 2}=\cO$.
 
 The  fundamental group of a complex  Enriques surface 
 is isomorphic to $\bZ/2\bZ$ and the universal double cover is a K3 surface.
 (see \cite[V\!I\!I\!I 15]{BPV}).\par
 
 When Bombieri and Mumford extended the classification of algebraic surfaces to characteristic $p>0$, they
 discovered new phenomena in characteristic $2$. They redefined Enriques surfaces as surfaces $X$ with
 numerically trivial canonical bundle and second Betti number $B_2=10$. They found that Enriques surfaces
 in characteristic $2$ may be divided in three classes as follows:
 \begin{enumerate}
 \item[(i)]
 \emph{classical} Enriques surfaces satisfy $h^1(\cO_X)=0$, hence $K_X\not\cong\cO_X$, but $K_X^{\otimes 2}\cong\cO_X$,
 \item[(ii)]
 \emph{ordinary} Enriques surfaces (sometimes called singular or $\mu_2$-surfaces) with $h^1(\cO_X)=1$, $K_X\cong\cO_X$, and Frobenius acts bijectively on $H^1(\cO_X)$,
 \item[(iii)]
\emph{ supersingular} Enriques surfaces (or $\alpha_2$-surfaces) with $h^1(\cO_X)=1$, $K_X\cong\cO_X$, and Frobenius on $H^1(\cO_X)$ is zero.
 \end{enumerate}
 (Their original Bombieri-Mumfod  terminology for 'ordinary' was 'singular';  Dolgachev uses 'ordinary'
 which nowadays is or is likely to become  standard)
 \par
 
 Enriques surfaces in positive characteristic continue to have a special double cover. For
 classical Enriques surfaces in characteristic $\ne 2$ and for ordinary Enriques surfaces in characteristic $2$
 the double cover is an unramified map from a K3 surface, while for classical and supersingular surfaces in
 characteristic $2$ the map is purely inseparable, and the covering surface (which is either K3 or rational)
 has singularities.

\medskip
To measure positivity of line bundles, one uses
\begin{equation*}
\Phi(D)=\min \{ |D\cdot f| : f^2=0 \text{ in } \Num(S) \}.
\end{equation*}

A big and nef primitive divisor $D$ such that $D^2=4$, $\Phi(D)=2$ is called a \emph{Cossec-Verra polarization}. 
Every Enriques
surface possesses a Cossec-Verra polarization \cite[3.4.2]{D}. However, this polarization may not be ample; in fact,
there exist ``extra-special'' Enriques surfaces in characteristic $2$ with no ample Cossec-Verra polarization
\cite[3.4]{Lie}.
Their non-ample locus is a union of smooth rational curves with self-intersection $-2$. 
There is a birational contraction $\pi\colon S\to X$ such that $D$ induces an ample divisor 
on the Gorenstein surface $X$. The singularities of $X$ consist of a finite number of rational double points.

\subsection{Enriques zero-sets}
Enriques surfaces occur as zero-sets of sections of our bundle $E$:
\begin{thm}\label{thm43}
The zero scheme of a general section $s\in H^0(E)$ is a nonsingular
surface $S$ with the following properties:
\begin{enumerate}
\item[\textup{1.}]
$\chi\cO_S=1$ and $K_S=\cO_S$, i.e., $S$ is a non-classical
Enriques surface.
\item[\textup{2.}]
The cycle class of $S$ in $\bP^2\times\bP^2$ is $4L^2+5Lh+4h^2$; i.e.,
the line bundles $L_S=\cO_S(L)$ and $h_S=\cO(h)$ have the following
intersections: $L_S^2=h_S^2=4$ and $L_S\cdot h_S=5$.
\item[\textup{3.}]
$L_S$ and $h_S$ are Cossec-Verra polarizations, but may not be ample
\emph{(}for an example, see \emph{\ref{ex48}} below\emph{)}.
\item[\textup{4.}]
The embedding of $S$ in $\bP^8$ under the Segre embedding of $\bP^2\times\bP^2$
is a linear projection of the image of $S$ in $\bP^9$ under the complete linear system
$\vert L_S+h_S\vert$.
\item[\textup{5.}]
The homogeneous ideal of $S$ is generated by $3$ polynomials each of bidegrees $(2,3)$
and $(3,2)$, and one additional generator of bidegree $(3,3)$.
\end{enumerate}
\end{thm}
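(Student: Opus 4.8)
The plan is to run everything off the Koszul resolution of the smooth zero scheme $S=Z(s)$ (smooth by Corollary \ref{cor29}),
\[
0\to \wedge^2 E^\vee\to E^\vee\to \cO\to \cO_S\to 0,\qquad \wedge^2E^\vee=\cO(-3L-3h),
\]
combined with the Chern data and the tables of Theorem \ref{coh_groups}. Property $2$ is numerical: twisting the Chern class \eqref{ce0} of $E_0$ by $L+h$ gives $c(E)=1+3(L+h)+(4L^2+5Lh+4h^2)$, and since $E$ has rank $2$ the class of $S$ is $c_2(E)=4L^2+5Lh+4h^2$; intersecting in $\Z[L,h]/(L^3,h^3)$ yields $L_S^2=h_S^2=4$ and $L_S\cdot h_S=5$. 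For Property $1$, adjunction gives $K_S=(K_{\bP^2\times\bP^2}+\det E)|_S=\cO_S(-3L-3h+3L+3h)=\cO_S$, so $S$ is minimal with $\kappa(S)=0$; and the Koszul complex gives $\chi\cO_S=2-\chi E^\vee=2-\chi E(-3,-3)$, where the table row $h^\bullet E(-3,-3)=(0,0,1,0)$ yields $\chi E(-3,-3)=1$, hence $\chi\cO_S=1$. A minimal surface of Kodaira dimension $0$ in characteristic $2$ has $\chi\cO=1$ precisely when it is an Enriques surface (abelian and bielliptic give $\chi=0$, K3 gives $\chi=2$), and since $K_S=\cO_S$ is trivial rather than only $2$-torsion, $S$ is non-classical; then $h^2\cO_S=h^0K_S=1$ forces $h^1\cO_S=1$.

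For Property $4$ I would twist the Koszul resolution by $\cO(1,1)$ and split it as
\[
0\to\cO(-2,-2)\to E(-2,-2)\to \I_S(1,1)\to 0
\]
together with $0\to \I_S(1,1)\to \cO(1,1)\to \cO_S(L_S+h_S)\to 0$. Since $\cO(-2,-2)$ is acyclic (K\"unneth), $H^i(\I_S(1,1))\cong H^i(E(-2,-2))$, and the table gives $h^\bullet E(-2,-2)=(0,1,0,0)$; feeding $H^0\I_S(1,1)=0$ and $h^1\I_S(1,1)=1$ into the second sequence gives $h^0\cO_S(L_S+h_S)=9+1=10$ and shows the restriction $H^0\cO(1,1)\to H^0\cO_S(L_S+h_S)$ is injective onto a hyperplane. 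Thus the Segre map $S\to\bP^8$ is given by a codimension-$1$ subsystem of the complete $|L_S+h_S|$, i.e. it is a linear projection from one point of the map $S\to\bP^9$ attached to $|L_S+h_S|$; very ampleness of $|L_S+h_S|$ follows because its projection is already the closed Segre embedding. Property $5$ reads the same resolution the other way: $\I_S=\im(E^\vee\to\cO)$, so a form of bidegree $(p,q)$ vanishing on $S$ comes from $H^0(E^\vee(p,q))=H^0(E(p-3,q-3))$. The table shows the first nonzero twists of $E$ occur at $(-1,0)$ and $(0,-1)$ (three sections each) and then at $(0,0)$ ($19$ sections), which under the shift by $(3,3)$ produce ideal generators in bidegrees $(2,3)$ and $(3,2)$ (three each) and $(3,3)$; at $(3,3)$ the products of the six lower generators with linear forms already span an $18$-dimensional space, leaving exactly one genuinely new generator. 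That these seven generate the homogeneous ideal minimally, with nothing in higher bidegree, is what I would confirm from the explicit minimal bigraded free resolution of $E$ (obtained with Macaulay2).

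Property $3$ carries the real difficulty. Nefness of $L_S,h_S$ is clear — each is the restriction of the pullback of the globally generated $\cO_{\bP^2}(1)$ — and $L_S^2=4>0$ makes them big; primitivity is forced numerically, since $L_S=nM$ would give $n^2\mid L_S^2=4$ and $n\mid L_S\cdot h_S=5$, whence $n=1$ (and similarly for $h_S$). The bound $\Phi(D)^2\le D^2$ gives $\Phi(L_S)\le 2$ at once, so the crux is to rule out $\Phi(L_S)=1$, i.e. the existence of an isotropic class $f$ with $L_S\cdot f=1$. Here I would exploit that $|L_S|$ is basepoint-free with $\dim|L_S|=2$, realizing $pr_1|_S\colon S\to\bP^2$ as a finite morphism of degree $L_S^2=4$: a class with $f^2=0$ and $L_S\cdot f=1$ would force a genus-one (half-)fiber mapping birationally onto a line $\ell\subset\bP^2_L$, and I expect a contradiction to emerge from the geometry of $S\cap(\ell\times\bP^2_h)$ against $L_S\cdot h_S=5$ being odd. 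This, rather than any of the cohomological bookkeeping, is the main obstacle; the possible non-ampleness is not proved in general but exhibited by Example \ref{ex48}.
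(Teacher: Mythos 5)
Your arguments for items 1, 2 and 4 are correct and essentially coincide with the paper's: adjunction plus the Koszul Euler-characteristic computation feeding into the Bombieri--Mumford classification for item 1, $[S]=c_2(E)$ for item 2, and the count $h^0\cO_S(L_S+h_S)=10>9=h^0\cO_{\bP^2\times\bP^2}(L+h)$ for item 4 (which you derive cleanly from $h^1E(-2,-2)=1$ via the twisted Koszul sequence; the paper just asserts the count). The problems are in items 3 and 5. For item 3 you have a genuine gap, which you yourself acknowledge: the whole content of the statement is ruling out $\Phi(L_S)=1$, and ``I expect a contradiction to emerge'' is not an argument. The paper disposes of exactly this step by citing the structure theory of linear systems on Enriques surfaces \cite[2.4.11, 2.4.14]{D}: a big and nef divisor with $\Phi=1$ necessarily has base points, so the basepoint-free $L_S$, $h_S$ (restrictions of globally generated bundles) have $\Phi\ge 2$, and $\Phi^2\le D^2=4$ finishes. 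Your sketch could likely be completed --- an integral arithmetic-genus-one curve mapping with degree $1$ onto a line would be finite and birational onto a normal curve, hence isomorphic to it, a contradiction --- but reducing to that case (the minimizing isotropic class must be replaced by a nef primitive one, i.e.\ a half-fiber, which may be reducible or non-reduced) requires precisely the machinery from \cite{D} that the citation packages; the parity of $L_S\cdot h_S=5$ plays no role.

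Item 5 is worse: your count is internally inconsistent, and the conclusion you aim at fails for the general section. From $0\to\cO\xrightarrow{s}E\to\cI_S(3,3)\to 0$ and the vanishing of $H^1$ of line bundles on $\bP^2\times\bP^2$ one gets $h^0\cI_S(3,3)=h^0E-1=18$; you appear to have used $19$, forgetting that $s$ itself maps to zero in $\cI_S(3,3)$. The bidegree-$(3,3)$ span of products of the six generators of bidegrees $(2,3)$ and $(3,2)$ with linear forms is the image of $\im(\psi)$ in $H^0E/\langle s\rangle$. For the general section $s\notin\im(\psi)$ (this is exactly how Corollary \ref{cor29} is proved), so that image is all of the $18$-dimensional space $H^0\cI_S(3,3)$, and there is \emph{no} additional minimal generator in bidegree $(3,3)$; by the paper's own Theorem \ref{thm49}(iv), an extra $(3,3)$-generator occurs precisely when $S$ is supersingular, i.e.\ only on a codimension-one locus of sections. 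So the sentence ``the products \dots already span an $18$-dimensional space, leaving exactly one genuinely new generator'' cannot stand: if the products span $18$ dimensions of an $18$-dimensional space, nothing is left over. (In fairness, the literal wording of item 5 is itself in tension with Theorem \ref{thm49} and the Remark after Theorem \ref{thm33}; it is true only in the weak sense that the seven polynomials of Theorem \ref{thm33} always generate the homogeneous ideal, the seventh being redundant for general $s$.) Note also that both you and the paper ultimately defer the absence of generators in higher bidegrees to the Macaulay2 resolution, so that part of your plan is on the same footing as the paper's.
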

\begin{proof}
Nonsingularity of the zero scheme of the general section was proved in Corollary \ref{cor29}.
$K_S$ is the restriction of $c_1E\otimes\omega_{\bP^2\times\bP^2}^{-1}=\cO$,
and $\chi\cO_S=\chi\cO-\big(\chi E(-3L-3h)-\chi\cO(-3L-3h)\big)=1$. According to the
extension of the Enriques-Kodaira classification of surfaces by Bombieri and Mumford 
\cite{bombieri-mumford-III},
$S$ is an Enriques surface with trivial canonical bundle.

The cycle class of $S$ agrees with $c_2E$, and the intersection of classes on $S$ agree
with the intersections on $\bP^2\times\bP^2$.

$L_S$ and $h_S$ are basepointfree linear systems of degree $4$, hence by
\cite[2.4.11,2.4.14]{D}, we must have $\Phi(L_S)=\Phi(h_S)= 2$, i.e., $L_S$ and
$h_S$ are Cossec-Verra polarizations.

As $h^0(\cO_S(L+h))=10$ and $h^0(\cO_{\bP^2\times\bP^2}(L+h))=9$, the embedding of $S$ inside the
Segre image of $\bP^2\times\bP^2$ is a linear projection.
\end{proof}

 \subsection{linear systems on Enriques surfaces}
We wish to identify the lines bundles $L_S, h_S$ in terms of the structure of 
 the Neron-Severi group of an Enriques surface. $\NS(S)$  has rank $10$
 and its torsion is generated by $K_S$, hence is $\Z_2$ in the classical
 case and trivial otherwise. The intersection
 form on $\Num(S)=\NS(S)/(K_S)\simeq\Z^{10}$ is unimodular, even and has signature $(1,9)$, hence
 is isomorphic to $H\oplus(-E_8)$. 

In \cite[1.5.3]{D} Dolgachev constructs a specific basis $\omega_0,\dotsc,\omega_9$
of $\Num(S)$ and determines the corresponding intersection matrix.
 
We can identify candidates for the classes of $L_S$ and $h_S$ inside $\Num(S)$ as follows:

The basis elements $\omega_1,\omega_2$ generate a primitive subgroup
with intersection matrix $\big(\begin{smallmatrix} 4 & 9 \\ 9 & 18\end{smallmatrix}\big)$ \cite{D}.
The vectors $\omega_1$, $\omega_2-\omega_1$ define an alternative basis of this subgroup
and have the desired intersection form
$\big(\begin{smallmatrix} 4 & 5 \\ 5 & 4\end{smallmatrix}\big)$.

%
%

If the classes $\omega_1$ and $\omega_2-\omega_1$ are both nef (e.g., if $S$ does not
contain any $(-2)$-curves), then they are 
Cossec-Verra polarizations, and they combine to yield a basepointfree morphism 
$S\to \bP^2\times\bP^2$.
\section{Detecting supersingularity}\label{ss-sec}

Our next task will be to identify ordinary resp. supersingular Enriques surfaces
among the zero schemes of the sections of $E$. Our plan will be to identify
suitable elliptic curves on $S$ and use the following criterion:

\emph{Suppose $C\subset S$ is a curve such that the restriction $\cO_S\to\cO_C$
induces an isomorphism $H^1(\cO_S)\to H^1(\cO_C)$. Then $S$ is supersingular
if and only if $C$ is supersingular.}


\begin{prop}\label{prop43}
Let $s\in H^0(E)$ be a section vanishing on a nonsingular Enriques surface $S$.
Then $S\cap\cA$ is a complete intersection $D_L\cap D_h\cap\cA$ for
uniquely  determined divisors $D_L\in \vert 3L\vert$, $D_h\in \vert 3h\vert$.
\end{prop}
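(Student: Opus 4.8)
The plan is to restrict the section $s$ to $\cA$ and read off the complete intersection from the splitting of $E$ there. First I would invoke Corollary \ref{cor25}, which gives $E\otimes\cO_\cA\cong\cO_\cA(3L)\oplus\cO_\cA(3h)$. Consequently $s|_\cA\in H^0(E\otimes\cO_\cA)=H^0(\cO_\cA(3L))\oplus H^0(\cO_\cA(3h))$ decomposes as a pair $(p,q)$ with $p\in H^0(\cO_\cA(3L))$ and $q\in H^0(\cO_\cA(3h))$. Since the scheme-theoretic intersection is $S\cap\cA=Z(s)\cap\cA=Z(s|_\cA)$, and $Z$ of a section of a split rank-$2$ bundle is the common vanishing of the two components, we have $S\cap\cA=Z(p)\cap Z(q)$ inside $\cA$.

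Next I would identify $p$ and $q$ with plane cubics. Twisting the structure sequence $0\to\cO(-L-h)\to\cO\to\cO_\cA\to 0$ by $\cO(3L)$ yields
\[
0\to\cO(2L-h)\to\cO(3L)\to\cO_\cA(3L)\to 0.
\]
By Künneth, $H^\bullet(\cO(2L-h))=H^\bullet(\cO_{\P^2}(2))\otimes H^\bullet(\cO_{\P^2}(-1))=0$ because the factor $H^\bullet(\cO_{\P^2}(-1))$ vanishes entirely; in particular $H^0(\cO(2L-h))=H^1(\cO(2L-h))=0$, so restriction induces an isomorphism $H^0(\cO(3L))\simto H^0(\cO_\cA(3L))$. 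Thus $p$ extends uniquely to a global section of $\cO(3L)=pr_1^*\cO_{\P^2_L}(3)$, i.e. to a unique $D_L\in|3L|$ pulled back from a cubic on $\P^2_L$; the symmetric argument produces a unique $D_h\in|3h|$ from $q$. By construction $Z(p)=D_L\cap\cA$ and $Z(q)=D_h\cap\cA$ as subschemes of $\cA$, whence $S\cap\cA=D_L\cap D_h\cap\cA$, and the uniqueness of $D_L,D_h$ is precisely the isomorphism above.

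It remains to verify that this really is a complete intersection, i.e. that $D_L,D_h,\cA$ meet in the expected codimension $3$ (dimension $1$); this is the only step that uses that $S$ is an honest smooth surface. I would compute, using $L^3=h^3=0$ and $[S]=4L^2+5Lh+4h^2$,
\[
(L+h)|_S^2=[S]\cdot(L+h)^2=(4L^2+5Lh+4h^2)(L^2+2Lh+h^2)=18>0,
\]
so $(L+h)|_S$ is a nonzero effective class and the irreducible surface $S$ cannot lie inside the divisor $\cA$; hence $S\cap\cA$ is a genuine curve, pure of dimension $1$. Since $Z(p)$ and $Z(q)$ are effective divisors on the smooth (hence Cohen--Macaulay) threefold $\cA$ whose intersection has dimension $1$, they share no common component, so $D_L\cap D_h\cap\cA$ has codimension $2$ in $\cA$ and is therefore cut out as a complete intersection by the two equations. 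This also forces $p\neq 0$ and $q\neq 0$: if, say, $q=0$ then $Z(s|_\cA)=Z(p)$ would be a full divisor of dimension $2$, contradicting $\dim(S\cap\cA)=1$.

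The main obstacle is exactly this last dimension-count: everything preceding it is formal bookkeeping with the splitting and the cohomology of $\cO_\cA(3L)$, but the complete-intersection assertion genuinely requires that $S$ meet $\cA$ properly, which I extract from the self-intersection $(L+h)|_S^2=18$ together with irreducibility of $S$ rather than from any transversality of the particular section $s$.
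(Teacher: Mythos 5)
Your core argument is the same as the paper's: invoke the canonical splitting $E\otimes\cO_\cA\cong\cO_\cA(3L)\oplus\cO_\cA(3h)$ of Corollary \ref{cor25}, write $s|_\cA=(p,q)$, identify $S\cap\cA$ with $Z(p)\cap Z(q)$ inside $\cA$, and then extend $Z(p)$, $Z(q)$ uniquely to divisors $D_L\in\vert 3L\vert$, $D_h\in\vert 3h\vert$ on $\P^2\times\P^2$. The paper asserts this extension step in one sentence; you prove it correctly via the Künneth vanishing $H^0(\cO(2L-h))=H^1(\cO(2L-h))=0$ applied to the restriction sequence, so up to this point your proposal is the paper's proof with the details filled in.

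The flaw is in the added properness argument. From $(L+h)|_S^2=18>0$ you conclude that ``the irreducible surface $S$ cannot lie inside the divisor $\cA$.'' That inference is a non sequitur: positivity of the class $\cO_S(\cA)$ says nothing about whether the particular section cutting out $\cA$ vanishes identically on $S$ --- a smooth surface $S$ contained in a hyperplane $H\subset\P^4$ has $\cO_S(H)^2=\deg S>0$, yet $S\subset H$. The conclusion $S\not\subset\cA$ is nevertheless true and can be extracted from the paper's own results: twisting the Koszul sequence $0\to\cO\to E\to\cI_S(3L+3h)\to 0$ by $\cO(-2L-2h)$ and using $h^0E(-2L-2h)=0$ (Theorem \ref{coh_groups}) together with $h^1\cO(-2L-2h)=0$ gives $H^0(\cI_S(L+h))=0$, so no divisor in $\vert L+h\vert$, in particular not $\cA$, contains $S$. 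Alternatively, since $\Pic(\cA)=\Z L_\cA\oplus\Z h_\cA$ (because $\cA\cong\P(Q)$), any surface lying on $\cA$ has cycle class $aL^2+(a+b)Lh+bh^2$ for some integers $a,b$, and no such class equals $4L^2+5Lh+4h^2$. With either repair, the rest of your dimension count (purity of $S\cap\cA$, $p\neq 0\neq q$, absence of common components) goes through, and your argument then establishes the complete-intersection claim in the strict codimension sense, a point the paper's own proof leaves implicit.
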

\begin{proof}
Since 
$E\otimes\cO_A\cong\cO_\cA(3L)\oplus\cO_\cA(3h)$ of $E$ on $\cA$ 
(\cite{p2p2-I}, Corollary 23), there 
exist uniquely determined divisors $D_{L\cA}\in\vert 3L_\cA\vert$, $D_{h,\cA}\in\vert 3h_\cA\vert$ such that
$S\cap\cA=(D_{L,\cA}\cap D_{h,\cA})\cap \cA$. Furthermore, $\cA$ is a  divisor  in
$|L+h|$ on  $\bP^2\times\bP^2$,
and therefore the divisors in the linear systems $\vert 3L_\cA\vert$ and $\vert 3h_\cA\vert$ on $\cA$ are
restrictions of uniquely determined 
effective divisors $D_L$, $D_h$ from the corresponding linear systems on $\bP^2\times\bP^2$.
\end{proof}


In the following we write $D_L$, $D_h$ for both the divisor in $\P^2$, its inverse
image  on $\bP^2\times\bP^2$, and the
cubic polynomial defining either one. This should lead no confusion.

\begin{prop}\label{prop46}
Let $C_L\subset S$ be the residual of $S\cap\cA$ in $S\cap D_L$.
\begin{enumerate}
\item[\textup{1.}]
$C_L$ is an effective divisor in $\vert 2L-h\vert_S$.
\item[\textup{2.}]
We have $(2L-h)^2=0$, $h^0(\cO_S(2L-h))=1$ and $h^1(\cO_S(2L-h))=h^2(\cO_S(2L-h))=0$.
Hence $C_L$ is a half-fiber (in the sense of \cite[2.2.9]{D})
\item[\textup{3.}]
$C_L$ is connected, $h^1(\cO_{C_L})=1$, and
the map $H^1(\cO_S)\to H^1(\cO_{C_L})$ is bijective.
\item[\textup{4.}]
We have $p_{L*} C_L\in|3L|\subset\bP^2_L$ 
\end{enumerate}
\end{prop}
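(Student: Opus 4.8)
The plan is to treat the four assertions in order, deriving (3) formally from (2) and disposing of (4) by a one-line intersection computation; the real content sits in the cohomological claim (2). For (1), I would first invoke Proposition \ref{prop43}, which writes $S\cap\cA = D_L\cap D_h\cap\cA$; in particular $S\cap\cA\subseteq D_L$, so the cubic section $D_L|_S\in H^0\cO_S(3L)$ vanishes along the effective Cartier divisor $S\cap\cA=\mathrm{div}(f|_S)\in|L+h|_S$ cut out by the linear form $f$ defining $\cA$. Dividing $D_L|_S$ by $f|_S$ exhibits $C_L$ as an effective divisor in $|3L-(L+h)|_S=|2L-h|_S$, which is exactly assertion (1).

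For (2), the self-intersection $(2L-h)^2=4L^2-4L\cdot h+h^2=16-20+4=0$ is immediate from the intersection numbers of Theorem \ref{thm43}(2), and Riemann--Roch on the Enriques surface $S$ (with $K_S=\cO_S$) gives $\chi\cO_S(2L-h)=1+\tfrac12(2L-h)^2=1$. Serre duality identifies $h^2\cO_S(2L-h)=h^0\cO_S(h-2L)$, which vanishes because $(h-2L)\cdot L_S=-3<0$ while $L_S$ is nef (being base-point free). The crucial point is $h^0\cO_S(2L-h)=1$. Rather than prove directly that $2L-h$ is nef and primitive and appeal to the half-fiber theory, I would compute this number in the ambient space: from the defining section sequence $0\to\cO(-3L-3h)\to E(-3L-3h)\to I_S\to 0$ (using $c_1E=3L+3h$), twisting by $2L-h$ and chasing cohomology gives $H^0\cO_S(2L-h)\cong H^1 I_S(2L-h)\cong H^1 E(-L-4h)$, every intermediate group vanishing by Künneth since $\cO_{\bP^2}(-1)$ is acyclic. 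The value $h^1E(-1,-4)=1$ is then read straight off the table in Theorem \ref{coh_groups}. With $h^0=1$ and $h^2=0$, Riemann--Roch forces $h^1=0$. These data, together with the connectedness established in (3) below and $p_a(C_L)=1$, identify $C_L$ as a half-fiber in the sense of \cite[2.2.9]{D}.

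Assertion (3) I would derive formally from (2). Feeding the structure sequence $0\to\cO_S(h-2L)\to\cO_S\to\cO_{C_L}\to 0$ into cohomology and applying Serre duality ($K_S=\cO_S$) to rewrite $H^i\cO_S(h-2L)\cong H^{2-i}\cO_S(2L-h)^\vee$, the vanishings from (2) collapse the long exact sequence. The segment $0\to H^0\cO_S\to H^0\cO_{C_L}\to 0$ yields $h^0\cO_{C_L}=1$, i.e. $C_L$ is connected. Moreover the edge map $H^2\cO_S(h-2L)\to H^2\cO_S$ is dual to the isomorphism $H^0\cO_S\xrightarrow{\,\cdot\, s_{C_L}\,}H^0\cO_S(2L-h)$ (both one-dimensional), hence injective, so the connecting map out of $H^1\cO_{C_L}$ vanishes; combined with the vanishing of $H^1\cO_S(h-2L)$ this forces the restriction $H^1\cO_S\to H^1\cO_{C_L}$ to be an isomorphism and $h^1\cO_{C_L}=1$.

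Finally, (4) is a projection-formula computation: since $p_L^*\cO_{\bP^2_L}(1)=\cO(L)$, the projection formula gives $\deg\big(p_{L*}C_L\big)=C_L\cdot L_S=(2L-h)\cdot L=3$ (the intersection taken on $S$), and as $\Pic\bP^2_L=\Z$ every effective divisor of degree $3$ lies in $|3L|$; the pushforward of the effective cycle $C_L$ is effective, so $p_{L*}C_L\in|3L|$. I expect the main obstacle to be precisely the equality $h^0\cO_S(2L-h)=1$ in step (2): the naive route through nef-ness and the half-fiber dichotomy must rule out that $2L-h$ is a genus-one pencil class (which would give $h^0=2$), and proving nef-ness of $2L-h$ on $S$ a priori is awkward. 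Reading the single number $h^1E(-1,-4)=1$ off the already-computed cohomology table sidesteps this entirely, which is why I would route the argument through the ambient bundle $E$.
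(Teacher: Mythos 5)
Your proof is correct and takes essentially the same route as the paper's (much terser) proof: the class computation $3L-(L+h)=2L-h$ for (1), intersection pairing plus the cohomology of $E$ for (2) --- your chain $H^0\cO_S(2L-h)\cong H^1\cI_S(2L-h)\cong H^1E(-L-4h)=1$ read off the table in Theorem \ref{coh_groups} is exactly what the paper's phrase ``the cohomology of $E$'' refers to --- the structure sequence $0\to\cO_S(h-2L)\to\cO_S\to\cO_{C_L}\to 0$ with Serre duality for (3), and the degree-$3$ cycle computation for (4). The alternative you were worried the paper might use (proving nef-ness of $2L-h$ directly and invoking the half-fiber dichotomy) is not what the paper does either, so there is no substantive divergence.
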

\begin{proof}
We calculate $C_L=3L-(L+h)=2L-h$ for the divisor class of $C_L$.
The second statement follows from the intersection pairing on $S$ and
the cohomology of $E$.
The third follows from the first (and Serre duality) upon considering the cohomology of
$0\to \cO_S(-2L+h)\to\cO_S\to\cO_{C_L}\to 0$.

The cycle class of $C_L$ in $\bP^2\times\bP^2$ is
$(2L-h)\cdot(4L^2+5Lh+4h^2)=6L^2h+3Lh^2$, hence $C_L.L=3$.
\par

Surjectivity of $H^0(\cO_S(L))\to H^0(\cO_C(L))$ follows from the vanishing of $H^1(\cO_S(-L+h))$.
\end{proof}

\begin{prop}\label{prop47}
Notation as in Proposition \ref{prop46}.
Let 
\begin{equation*}
\begin{tikzcd}[column sep=small]
C_L  \ar[r,"f'"] & D'_L \ar[r,"g"] & D_L\subset\P^2
\end{tikzcd}
\end{equation*}
be the Stein factorization of the projection $C_L\to D_L$.
Thus $f'$ has connected fibres and $g$ is finite. Then the following hold:
\begin{enumerate}
\item[\textup{1.}]
$f'$ induces an isomorphism between $H^1(\cO_{C_L)}$ and $H^1(\cO_{D'_L)}$.
\item[\textup{2.}]
$D'_L$ and $D_L$ have the same number of irreducible components with
multiplicities, and $g$ is generically bijective for each irreducible component of 
$D'_L$ and its image.
\end{enumerate}
\end{prop}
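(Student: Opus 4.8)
The plan is to route everything through the single morphism $\pi = pr_1|_{C_L}\colon C_L\to D_L$, where we regard $D_L$ as the plane cubic $\{D_L=0\}\subset\bP^2_L$ (so its components and multiplicities are those of the divisor $D_L\in|3L|$), and $f'$, $g$ are the two factors of its Stein factorization, so that $f'_*\cO_{C_L}=\cO_{D'_L}$, $f'$ has connected fibres, and $g$ is finite. The horizontal part of $C_L$ (components dominating $D_L$) is mapped finitely by $\pi$, while the vertical part (components contracted by $pr_1$, i.e. curves $\Gamma\subset C_L$ with $L_S\cdot\Gamma=0$) is collapsed by $f'$ to finitely many points of $D'_L$. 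Both assertions come from analyzing these contracted fibres together with the cycle identity furnished by Proposition \ref{prop46}(4).

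For statement 1, I would run the Leray spectral sequence for $f'$. Since $D'_L$ is a curve, $H^2(D'_L,-)=0$, and using $f'_*\cO_{C_L}=\cO_{D'_L}$ the low-degree exact sequence collapses to
\begin{equation*}
0\to H^1\cO_{D'_L}\to H^1\cO_{C_L}\to H^0\big(R^1f'_*\cO_{C_L}\big)\to 0 .
\end{equation*}
Thus statement 1 is equivalent to the vanishing $R^1f'_*\cO_{C_L}=0$. By the theorem on formal functions this sheaf is supported at the images of the positive-dimensional fibres $Z$ of $f'$, with stalk governed by $H^1\cO_Z$, so it suffices to show each connected contracted fibre $Z$ is a tree of smooth rational curves. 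This is where the half-fibre structure enters: by Proposition \ref{prop46}, $C_L$ has arithmetic genus $1$, so the first Betti number of its dual graph is at most $1$; since $C_L\cdot L=3>0$, the map $\pi$ is non-constant and at least one component of $C_L$ is horizontal, hence no connected contracted subconfiguration can contain the unique cycle. So each $Z$ is a rational tree, $H^1\cO_Z=0$, and statement 1 follows.

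For statement 2, the key input is Proposition \ref{prop46}(4): since $C_L\subset D_L$ and $p_{L*}C_L\in|3L|$ is a cubic contained in the cubic $D_L$, the two must coincide, giving the equality of $1$-cycles $\pi_*[C_L]=[D_L]$. The contracted components contribute $0$ to $\pi_*[C_L]$, so the horizontal components already account for all of $[D_L]$; comparing with the total degree $C_L\cdot L=3=\deg D_L$ and using that $\pi_*[C_L]$ surjects onto every component of $D_L$, each horizontal component maps generically injectively onto its image component, with multiplicity inherited from the residual description $S\cap D_L=C_L+(S\cap\cA)$. Passing through the Stein factorization, $f'$ carries each horizontal component birationally onto a component of $D'_L$ while $g$ is the residual finite map; the cycle identity $g_*[D'_L]=[D_L]$ together with generic injectivity then yields that $g$ is generically bijective on each component and that $D'_L$ and $D_L$ have the same components with the same multiplicities.

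I expect the main obstacle to be statement 1, specifically the vanishing $R^1f'_*\cO_{C_L}=0$: one must genuinely exclude a contracted fibre carrying cohomology, i.e. a collapsed cycle of rational curves, and this rests on the arithmetic-genus-$1$ constraint on $C_L$ together with non-triviality of $\pi$ on every connected piece. The bookkeeping of multiplicities over non-reduced or degenerate cubics $D_L$ in statement 2 is the secondary delicate point, handled by tracking the scheme structure that $C_L$ inherits as the residual of $S\cap\cA$ in $S\cap D_L$.
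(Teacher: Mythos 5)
Your reduction of statement 1 to the vanishing $R^1f'_*\cO_{C_L}=0$ is the same opening move as the paper's, but your proof of that vanishing has a genuine gap. You argue that since $C_L\cdot L=3>0$ at least one component of $C_L$ is horizontal, ``hence no connected contracted subconfiguration can contain the unique cycle.'' This is a non sequitur: a cycle in the dual graph could consist entirely of vertical components (a priori the canonical-type part of $C_L$ could be a configuration of type $I_n$ made of $(-2)$-curves contracted by $L_S$), with a horizontal component attached to it but not lying on it. Excluding this requires the structure theory the paper invokes: writing $C_L=C'+\sum m_iR_i$ with $C'$ nef of canonical type, one must show $C'$ cannot be vertical (Hodge index: $C'\cdot L_S=0$, $C'^2=0$, $L_S^2=4>0$ would force $C'\equiv 0$) and that any dual-graph cycle lies in $C'$. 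Moreover, your formal-functions step only inspects $H^1\cO_Z$ for the fibre $Z$; since vertical components can occur with multiplicity in $C_L$, the vanishing of $R^1f'_*\cO_{C_L}$ requires $H^1=0$ on \emph{all} infinitesimal neighbourhoods $Z_n$ of the fibre (the transition maps $H^1\cO_{Z_{n+1}}\to H^1\cO_{Z_n}$ are surjective, so a single nonvanishing propagates to the limit), and ``the reduced fibre is a tree of smooth rational curves'' does not by itself give this. The paper bypasses the whole analysis: it uses the contraction $\pi\colon S\to X$ to the Gorenstein surface on which $L_S$ becomes ample, for which $R^1\pi_*\cO_S=0$ is known \cite[2.4.16]{D}, factors $\pi|_{C_L}=\pi'\circ f'$, and concludes $R^1f'_*\cO_{C_L}=0$ because its pushforward under $\pi'$ equals (by Leray, and by pushing $0\to\cO_S(-C_L)\to\cO_S\to\cO_{C_L}\to 0$ forward under $\pi$) a quotient of $R^1\pi_*\cO_S=0$, while a nonzero sheaf supported on points cannot have zero pushforward.

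For statement 2, your key identity $p_{L*}[C_L]=[D_L]$ is also not justified at this stage of the argument. Knowing that $p_{L*}C_L$ is a degree-$3$ cycle supported inside the cubic $D_L$ does not force equality: if $D_L$ were non-reduced, say $2\ell+\ell'$, the pushforward could a priori be $\ell+2\ell'$ or $3\ell$; and you cannot assume $D_L$ is reduced, because reducedness is proved only afterwards (Proposition \ref{prop48}) \emph{using} Proposition \ref{prop47}, so that assumption would be circular. You likewise assert, without proof, that the image of $C_L$ meets every irreducible component of $D_L$. Both missing points are exactly what the paper's per-component computation supplies: for each irreducible component $E\subset D_L$ of degree $d$, the residual of the cycle $E\cap D_h\cap\cA$ in $S\cap E$ has class $2dL^2h+dLh^2$, whose intersection number with $L$ is $d$; since $E$ itself has degree $d$, the horizontal part of $C_L$ over $E$ is exactly one component mapping generically bijectively onto $E$. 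That local intersection-theoretic argument, not the global cycle identity, is what yields statement 2 together with the multiplicity bookkeeping, and it is the piece your proposal would need to add.
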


\begin{proof}
We need to investigate the structure of the divisor $C_L$.

\begin{enumerate}
\item[(1)] The components:
Every effective divisor class $C$ with $C^2\ge 0$ can be expressed as 
\begin{equation*}
C \sim C'+\sum m_iR_i, \qquad (m_i\ge 0)
\end{equation*}
where $C'$ is nef and the $R_i$ are $(-2)$-curves \cite[2.3.3]{D}.
Since $C_L$ is unique in its divisor class by Proposition \ref{prop46}, the above
decomposition is a decomposition of divisors, and not only
of divisor classes.

The potential structures for numerically effective 
half-fibers $C'$ (note that $C'$ is automatically an indecomposable divisor of canonical type
in the sense of Mumford)
have been classified \cite[2.2.5]{D}.
If $C'$ is irreducible, then
(being of arithmetic genus $1$) it is an elliptic curve, or a nodal or cuspidal rational curve.
If $C'$ is reducible, then its components are nonsingular rational curves 
of self-intersection $-2$.

\item[(2)] Vertical and horizontal components:
We can divide the components of $C_L$ into horizontal and vertical (fibral) components. The 
$f$- images of the former are $1$-dimensional, while the latter are each mapped to a point.

Vertical components are $(-2)$-curves on $S$ contracted by the Cossec-Verra polarization $L_S$.
There is a Gorenstein surface $X$ with a finite number of rational double points, and a contraction 
$\pi\colon S\to X$ such that $L_S$ induces an ample line bundle on $X$; in particular 
we note $R^1\pi_*\cO_S=0$
\cite[2.4.16]{D}.

The cycle class of $C_L$ in $\bP^2\times\bP^2$ is
$(2L-h)\cdot(4L^2+5Lh+4h^2)=6L^2h+3Lh^2$, hence the 
intersection of $C_L$ with a general divisor of $h$
consists of $3$ points (counted with multiplicity), and $C_L$ has at most 
$3$ horizontal components.
\item[(3)] Stein factorization of the projection:
Now consider the Stein factorization
\begin{equation*}
\begin{tikzcd}[column sep=small]
C_L  \ar[r,"f'"] & D'_L \ar[r,"g"] & D_L
\end{tikzcd}
\end{equation*}
where $f'$ has connected fibers and $g$ is finite.
The components of $D'_L$ are the images of the horizontal components of $C_L$.
We have $f'_*\cO_{C_L}=\cO_{D'_L}$ by construction, and we claim that $R^1f'_*\cO_{C_L}=0$.

To this end, consider the restriction to $C_L$ of the contraction $S\to X$. 
Since $\pi$ contracts all the vertical fibers that $f'$ contracts, we have
$\pi\vert_{C_L}=\pi'\circ f'$ for some morphism $\pi'$. If $R^1f'_*\cO_{C_L}$ were not $0$,
then it would be supported on a finite set of points, and $\pi'_*R^1f'_*\cO_{C_L}$ would
also be non-zero.
But the latter agrees (by Leray) with $R^1\pi_*\cO_{C_L}$, which is a quotient of $R^1\pi_*\cO_S=0$.
Therefore we must have $R^1f'_*\cO_{C_L}=0$.

We can thus conclude that $H^1( \cO_{C_L})=H^1( f'_*\cO_{C_L})$, i.e., $f'$ induces an isomorphism between
$H^1(\cO_{D'_L})$ and $H^1(\cO_{C_L})$.
\item[(4)] Let $E\subset D_L$ be an irreducible component of degree $d\le 3$. The $1$-cycle
$E\cap D_h\cap \cA$ has the class $dL\cdot 3h\cdot (L+h)=3d(L^2h+Lh^2)$ and is contained in 
$S\cap\cA$.
The residual of this cycle in $S\cap E$ has the class
$(5dL^2h+4dLh^2)-3d(L^2h+Lh^2)=2dL^2h+dLh^2$, hence intersects a general divisor in $\vert L\vert$
in $d$ points, counted with multiplicity. This implies that $C_L$, and hence $D'_L$, contains
exactly one horizontal irreducible component mapping to $E\subset D_L$.

%
\end{enumerate}

We have now reached to the following situation: There is a scheme $D'_L$ and a
finite map $D'_L\to D_L$ that is generically bijective for each irreducible component, with the
same multiplicities.
Each component of $D'_L$ is the isomorphic image of a subscheme of $X$.

In particular, the map $g$ from (3) is locally an isomorphism over each nonsingular point of $D_L$.
\end{proof}

There is a finite list of isomorphism classes for $D_L$ (see Example \ref{ex17}) and we will inspect
them individually.

\begin{prop}\label{prop48}
Notation as in  Propositions \ref{prop46}, \ref{prop47}. Then
\begin{enumerate}
\item[\textup{1.}]
$D_L$ is reduced.
\item[\textup{2.}]
If $D_L$ is irreducible, then $g\colon D'_L\to D_L$ is an isomorphism.
\item[\textup{3.}]
If $D_L$ has several irreducible components, then $g\colon D'_L\to D_L$ is also an isomorphism.
\end{enumerate}
\end{prop}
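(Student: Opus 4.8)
My plan is to reduce parts~2 and~3 to a single Euler-characteristic computation, and to treat reducedness (part~1) separately as the genuinely hard point. Both curves in play have arithmetic genus $1$: for any plane cubic $\chi(\cO_{D_L})=0$, while Propositions \ref{prop46} and \ref{prop47} show $D'_L$ is connected with $h^0\cO_{D'_L}=h^1\cO_{D'_L}=1$, so $\chi(\cO_{D'_L})=0$ as well. Granting that $D_L$ is reduced, the finite morphism $g$ is surjective and dominant on each irreducible component (Proposition \ref{prop47}), hence the structure map $\cO_{D_L}\to g_*\cO_{D'_L}$ is injective with torsion cokernel $\mathcal Q$. Since $g$ is finite, $\chi(g_*\cO_{D'_L})=\chi(\cO_{D'_L})$, so $\mathrm{length}(\mathcal Q)=\chi(\cO_{D'_L})-\chi(\cO_{D_L})=0$ and $\mathcal Q=0$; thus $g$ is an isomorphism, settling parts~2 and~3 at one stroke. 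In the reducible case the only real content is that $g$ cannot separate a node (or a cusp, in part~2) of $D_L$, and this is precisely what the equality $\chi(\cO_{D'_L})=\chi(\cO_{D_L})$ forbids, since a separation would raise $\chi$.

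It remains to prove reducedness (part~1), which I regard as the crux, and I would argue it by contradiction from the smoothness of $S$. By Example \ref{ex17} a non-reduced plane cubic is a triple line or a double line plus a line, so $D_L=\{p=0\}$ with $p=m^2m'$ for linear forms $m,m'$ (with $m'=m$ in the triple case). Restricting $s$ to the smooth divisor $\cA$ produces a pair $(p,q)\in H^0\cO_\cA(3L)\oplus H^0\cO_\cA(3h)$ with $S\cap\cA=\{p=q=0\}$. In characteristic $2$ one computes $dp=m^2\,dm'$, so $dp$ vanishes identically along the multiple line $\{m=0\}$; at any point $P$ of $\{m=0\}\cap\{q=0\}\cap\cA$ the differentials $dp,dq$ fail to be independent, so the complete intersection $S\cap\cA$ is singular at $P$.

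The last and hardest step is to promote this non-transversality on $\cA$ into an honest singularity of $S$ itself: a singular point of the divisor $S\cap\cA$ does not by itself force $S$ to be singular. For this I would use the jet criterion of Lemma \ref{principal-lem} together with the splitting $E|_\cA\cong\cO_\cA(3L)\oplus\cO_\cA(3h)$ and the description of $E$ across $\cA$ from \S\ref{image}, checking that the collapse of $dp$ along a one-dimensional locus cannot be compensated by the first-order behaviour of $s$ in the normal direction to $\cA$; this makes $s$ degenerate at $P$, so $S$ is singular there, against the hypothesis. As an intrinsic cross-check I would observe that, since $g$ preserves multiplicities and $C_L\to D'_L$ is birational on horizontal components (Proposition \ref{prop47}), a non-reduced $D_L$ would force the half-fiber $C_L$ to carry a horizontal component of multiplicity $\ge 2$, i.e. to be of an additive canonical type $\tilde{D}$ or $\tilde{E}$ with its high-multiplicity node horizontal, which I would test against the numerics $C_L\cdot L_S=3$, $C_L\cdot h_S=6$, the classification of half-fibers \cite[2.2.5]{D}, and the fact that $p_L|_S$ is generically finite of degree $L_S^2=4$. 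I expect the normal-direction computation on $\cA$ to be the decisive ingredient, with the half-fiber bookkeeping serving as corroboration.
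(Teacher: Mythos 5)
Your Euler--characteristic argument for parts 2 and 3 is correct and takes a genuinely different, more uniform route than the paper, which proceeds by cases on the isomorphism type of $D_L$ (smooth elliptic, nodal/cuspidal cubic, line plus conic, three lines) and checks in each case that every singularity of $D_L$ is ``mirrored'' in $D'_L$ using $h^1\cO_{D'_L}=1$; your single length count $\mathrm{length}(\mathcal Q)=\chi(\cO_{D'_L})-\chi(\cO_{D_L})=0$ subsumes all of that. One caution specific to characteristic $2$: to conclude that the cokernel of $\cO_{D_L}\to g_*\cO_{D'_L}$ is torsion you need $g$ to have degree $1$ onto each component, which is strictly stronger than ``dominant'' or ``generically bijective'' (relative Frobenius of a curve is bijective on points, finite of degree $2$, and $\cO_D\to F_*\cO_D$ has a locally free, non-torsion cokernel, which would wreck the length computation). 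The degree-$1$ statement is what Proposition \ref{prop47} actually establishes --- in step (4) of its proof the residual cycle over a degree-$d$ component $E$ has intersection $d$ with $L$, forcing exactly one horizontal component mapping to $E$ with degree exactly $1$ --- so you should invoke that, not mere dominance.

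Part 1 is where the genuine gap lies, and you have located it yourself: singularity of the complete intersection $S\cap\cA$ does not imply singularity of $S$, and the tool you cite, Lemma \ref{principal-lem}, cannot bridge this --- it is a Bertini-type statement that the \emph{general} member of a system with generating jets has smooth zero set, and it gives no information about a \emph{particular} section being degenerate. The sentence ``the collapse of $dp$ cannot be compensated by the first-order behaviour in the normal direction'' is exactly the assertion to be proved, not an argument. The step can be completed within your framework by a degree count: along $\Gamma=\{m=0\}\cap\{q=0\}\cap\cA\subset S$ the tangential derivative $d(s|_\cA)=(dp,dq)$ has vanishing $\cO_\cA(3L)$-component, so the $\cO_\cA(3L)$-component of the intrinsic derivative $ds$ factors through the normal bundle $N_\cA=\cO_\cA(L+h)$ and defines a section $\sigma\in H^0(\cO_\Gamma(2L-h))$; smoothness of $S$ along $\Gamma$ would force $\sigma$ to be nowhere zero, hence $\cO(2L-h)$ to be trivial on each connected component of $\Gamma$, but $[\Gamma]=L\cdot 3h\cdot(L+h)=3L^2h+3Lh^2$ gives $\deg_\Gamma\cO(2L-h)=(2L-h)\cdot(3L^2h+3Lh^2)=3L^2h^2=3\neq 0$, so $\sigma$ vanishes somewhere and $S$ is singular there. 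Without this (or an equivalent) computation, your part 1 is a plan, not a proof. Note also that the paper's own proof of part 1 is entirely different and stays on the curve side: a non-reduced $D_L$ forces, via Proposition \ref{prop47}, a double-line structure $E_1$ inside $D'_L$ whose underlying line $E$ is the isomorphic image of a $(-2)$-curve on $S$, so the ideal of $E$ in $E_1$ is $\cO_{\bP^1}(2)$, whence $h^0\cO_{E_1}\ge 3$ and, by Mayer--Vietoris, $h^0\cO_{D'_L}\ge 2$, contradicting $h^0\cO_{D'_L}=h^0\cO_{C_L}=1$. Your ``half-fiber bookkeeping'' gestures toward this argument, but as stated it carries no proof content.
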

\begin{proof}
1. We argue by contradiction, and assume that $D_L$ has a non-reduced component. 
By Proposition \ref{prop47} this means that $D'_L$ consists of a double line together with another line intersecting it in one point,
or $D'_L$ is a multiplicity $3$ structure on a line. 

First assume that $D'_L$ consists of a double structure $E_1$ on a line $E$, and a single line $E'$.
The ideal sheaf of $E$ on $E_1$ has square $0$, hence is isomorphic to the line bundle
$\cO(a)$ on $E$ where $a=-E^2=2$ (intersection taken on $S$). This implies that $h^0(\cO_{E_1})\ge h^0(\cO_E(2))=3$.

Now consider the Mayer-Vietoris sequence
\begin{equation*}
0 \to H^0(\cO_{D'_L}) \to H^0(\cO_{E_1}) \oplus H^0( \cO_{E'}) \to H^0(\cO_{E_1\cap E'}):
\end{equation*}
As $h^0(\cO_{E_1\cap E'})\le 2$, we conclude that $h^0(\cO_{D'_L})\ge 4-2=2$.
However, by construction we have $h^0(\cO_{D'_L})=h^0(\cO_{C_L})=1$, hence
this contradiction implies that the assumed double structure cannot exist.

A similar argument rules out the case where $D_L$ is a triple structure on a line.

2. If $D_L$ is as smooth elliptic curve, then the projection $D'_L\to D_L$ is automatically an isomorphism.
If $D_L$ is a singular rational curve, then $D'_L$ must be irreducible, rational with $h^1\cO_{D'_L}=1$.
The last condition implies that $D'_L$ is singular, and it is immediate that the singularity must be of the
same type as the singularity of $D_L$. 

3. We already know that $D'_L$ and $D_L$ have the same number of components, and these are
smooth rational curves. Furthermore, any intersection of two components in $D'_L$ has to lie above
an intersection of the corresponding components in $D_L$.

There are $4$ different cases to consider: a line and a conic intersecting in two points or touching in one point,
three lines intersecting in one point or in three separate points.

In each case, if an intersection of two components of $D_L$ were not mirrored in $D'_L$, then 
we would have $h^1(\cO_{D'_L})=0$ in contradiction to the results of the previous proposition.
\end{proof}

 \begin{example}\label{ex48}
 Consider the Enriques surface defined by (notation as in \cite{p2p2-I},  Theorem 27) \\
 $w=\begin{pmatrix}b+c&b+c&x+z&a+c&z&y&1  \end{pmatrix}$.
 
 The unique divisor $C_L\in \vert 2L-h\vert$ has three components:
 two vertical lines over the points $a=b, b=\alpha_ic$ (where $\alpha_1,\alpha_2$ are
 the solutions of the quadratic equation $X^2+X+1=0$), and
 an ordinary elliptic curve whose projection is
 $D_L=a^2b+b^3+a^2c+abc+b^2c+ac^2+c^3$.
 
 The unique divisor $C_h\in \vert 2h-L\vert$ has four components:
 three vertical lines over the points $y=z,x=0$ and $x=y+z,z=\alpha_iy$ ($i=1,2$), and
 an ordinary elliptic curve whose projection is
 $D_h=y^3+x^2z+xyz+xz^2+z^3$.
 
 In this example neither Cossec-Verra polarization $\cO_S(L)$ nor $\cO_S(h)$ is 
 ample, as it contracts at least $2$ resp. $3$ rational curves.
 \end{example}

Finally we are ready for the main theorem of this section. 
 
 \begin{thm}\label{thm49}
 Let $s\in H^0E$ be a section vanishing on a nonsingular Enriques surface $S$
 with trivial canonical bundle. 
 The following conditions are equivalent:
 \begin{enumerate}
 \item[\textup{(i)}] 
 $S$ is supersingular.
 \item[\textup{(ii)}]
 $s$ is in the image of the canonical map
 \begin{equation*}
 \psi\colon \Big(H^0(E(-L))\otimes H^0(\cO(L))\Big)
 \oplus
 \Big(H^0(E(-h))\otimes H^0(\cO(h))\Big)
 \to H^0(E).
 \end{equation*} 
 \item[\textup{(iii)}] 
 The restriction  $s|_{\cA}\in H^0(E|_\cA)=H^0(\O_{\P^2_L}(3)\oplus\O_{\P^2_h}(3))$ 
 corresponds to a pair of cubic polynomials $D_L,D_h$
 defining possibly degenerate supersingular cubic curves \emph{(}as explained in \emph{\ref{ex17})}.
 \item[\textup{(iv)}] 
 The homogeneous bigraded ideal of $S$ is not generated by its elements
 in bidegrees $(3,2)$ and $(2,3)$; i.e. there is an additional generator in
 bidegree $(3,3)$.
 \end{enumerate}
 \end{thm}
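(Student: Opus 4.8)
The plan is to run the proof through the two intermediate conditions, establishing (i)$\Leftrightarrow$(iii), then (ii)$\Leftrightarrow$(iii), and finally (iii)$\Leftrightarrow$(iv). The geometric heart is the half-fiber $C_L\in|2L-h|_S$ of Proposition \ref{prop46} together with its projection to the cubic $D_L$, which already does almost all of the work; everything about supersingularity is transported down to the plane cubics $D_L,D_h$, where Proposition \ref{prop16} applies.

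For (i)$\Leftrightarrow$(iii): By Proposition \ref{prop46}(3) the restriction $\cO_S\to\cO_{C_L}$ induces an isomorphism on $H^1$, so by the criterion opening this section $S$ is supersingular iff $C_L$ is. Proposition \ref{prop47}(1) identifies $H^1\cO_{C_L}$ with $H^1\cO_{D'_L}$, and Proposition \ref{prop48} shows the Stein map $g\colon D'_L\to D_L$ is an isomorphism; since all of these are restriction/pullback morphisms they are compatible with Frobenius, so $S$ is supersingular iff the cubic curve $D_L$ is. By Proposition \ref{prop16} this holds iff the $abc$-coefficient of $D_L$ vanishes, i.e. iff $D_L$ is one of the ``$F^*=0$'' cubics of Example \ref{ex17}. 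Running the symmetric argument with the two factors exchanged (Corollary \ref{symmetry-cor}, which produces $C_h\in|2h-L|_S$ projecting to $D_h$) shows equally that $S$ is supersingular iff the $xyz$-coefficient of $D_h$ vanishes. In particular the two degeneracies are forced to occur together, and condition (iii) is precisely the statement that both $D_L$ and $D_h$ are supersingular cubics.

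For (ii)$\Leftrightarrow$(iii): Under the identification $H^0(E\otimes\cO_\cA)\cong H^0\cO(3L)\oplus H^0\cO(3h)$ of Corollary \ref{cor25}, the restriction $s\mapsto s|_\cA$ sends a section to the pair $(D_L,D_h)$ of Proposition \ref{prop43}, and $H^0E$ injects into this space because $H^0E(-L-h)=0$. The corollary and remark following Proposition \ref{prop24} compute $\im\psi$ inside $H^0E$: it consists exactly of the sections for which $D_L$ has no $abc$-term and $D_h$ has no $xyz$-term, while every section of $H^0E$ lying outside $\im\psi$ has both terms nonzero. Combined with Proposition \ref{prop16}, this reads as $s\in\im\psi$ iff $D_L$ and $D_h$ are both supersingular, which is (iii).

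For (iii)$\Leftrightarrow$(iv), which I expect to be the main obstacle: by the ideal structure (Theorem \ref{thm43}) the homogeneous ideal starts in bidegrees $(3,2),(2,3)$, and the only bidegree in which a further minimal generator can occur is $(3,3)$, so (iv) is equivalent to non-surjectivity of the multiplication map
\[ \mu\colon\big(H^0\cO(1,0)\otimes H^0\cI_S(2,3)\big)\oplus\big(H^0\cO(0,1)\otimes H^0\cI_S(3,2)\big)\longrightarrow H^0\cI_S(3,3). \]
I would analyze $\mu$ by restricting to $\cA$: since $S\cap\cA=D_L\cap D_h\cap\cA$ is a complete intersection (Proposition \ref{prop43}), the ideal of $S\cap\cA$ in $\cA$ is $(D_L,D_h)$, and the kernel-of-restriction contribution to $H^0\cI_S(3,3)$ is governed by $H^0\cI_S(2,2)=0$. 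One then compares the span of the products against the $(3,3)$-piece of $(D_L,D_h)$, using the explicit splitting of Corollary \ref{cor25}; the delicate point is showing that this span drops rank by one, leaving a surviving generator in bidegree $(3,3)$, exactly when the cubics lose their $abc$- and $xyz$-terms, i.e. exactly in the supersingular case. This rank statement is the crux and is the step most naturally confirmed from the explicit generators of the homogeneous ideal (computed via Macaulay2); the other two equivalences are then essentially formal, and assembling them gives the theorem.
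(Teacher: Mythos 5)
Your equivalences (i)$\Leftrightarrow$(iii) and (ii)$\Leftrightarrow$(iii) are correct and follow the paper's own route: the chain $H^1\cO_S\cong H^1\cO_{C_L}\cong H^1\cO_{D'_L}\cong H^1\cO_{D_L}$ from Propositions \ref{prop46}, \ref{prop47}, \ref{prop48} combined with Proposition \ref{prop16}, and the parametrization of $\im(\psi)$ by vanishing of the $abc$/$xyz$-coefficients (Corollary \ref{cor35}). The problem is your (iii)$\Leftrightarrow$(iv). You correctly reduce (iv) to non-surjectivity of the multiplication map $\mu$ into $H^0\cI_S(3,3)$, but then you only sketch a strategy (restrict to $\cA$, compare spans against the $(3,3)$-piece of $(D_L,D_h)$) and explicitly defer the decisive rank statement to a Macaulay2 computation that you never carry out. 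As written, the crux of the remaining equivalence is assumed, not proved; that is a genuine gap, and the route through $\cA$ is also more delicate than you acknowledge, since you would have to control the kernel of restriction $H^0\cI_S(3,3)\to H^0\cI_{S\cap\cA,\cA}(3,3)$ as well, not just the image.

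The gap closes with a two-line dimension count, which is what the paper does --- and the key move is to link (iv) to (ii) rather than to (iii). Twisting $0\to\cO\xrightarrow{\,s\,}E\to\cI_S(3,3)\to 0$ by $\cO(-L)$ and $\cO(-h)$ identifies $H^0\cI_S(2,3)\cong H^0E(-L)$ and $H^0\cI_S(3,2)\cong H^0E(-h)$, so the image of your $\mu$ is exactly the image of the composite
\begin{equation*}
H^0\Big(\big(H^0E(-L)\otimes\cO(L)\big)\oplus\big(H^0E(-h)\otimes\cO(h)\big)\Big)\longrightarrow H^0E\longrightarrow H^0\cI_S(3,3).
\end{equation*}
The first map has image $\im(\psi)$, of dimension $18$; the second map has kernel precisely the line $k\cdot s$; and $h^0\cI_S(3,3)=h^0E-1=18$. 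Hence $\mu$ fails to be surjective if and only if $k\cdot s\subset\im(\psi)$, i.e.\ if and only if $s\in\im(\psi)$, which is (ii). No restriction to $\cA$, no rank analysis, and no computer verification is needed for this step.
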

 \begin{proof}
 The equivalence of (ii) and (iii) is immediate from the discussion in Corollary \ref{multi-cor}.
 
 (ii) and (iv) are equivalent:
 Consider the diagram
 \begin{equation*}
 \begin{tikzcd}
 && V_L\oplus V_h \ar[d] \\
 0 \ar[r] & \cO \ar[r] & E \ar[r] & \cI_S(3,3) \ar[r] & 0
 \end{tikzcd}
 \end{equation*}
 where $V_L=H^0(E(-L))\otimes L$ and $V_h=H^0(E(-h))\otimes h$.
 We know that $h^0(V_L\oplus V_h)=18$ and $h^0(E)=19$.

 The section $s\in H^0E$ is in the image of $H^0(V_L\oplus V_h)$ if and only if
 the morphism $H^0(V_L\oplus V_h)\to H^0(\cI_S(3,3))$ is not surjective.
 
 (i) and (iii) are equivalent:
 Recall that we identified in Proposition \ref{prop46} two (possibly reducible)
 half-fibers on $S$ that are supersingular if and only if $S$ is.
 Furthermore these divisors are contained in the intersection of $S$ with the 
 hypersurfaces of $\bP^2\times\bP^2$
 cut out by the pullbacks of the polynomials $D_L$ resp. $D_h$.
 
 The equivalence follows if we can show that the projection $C_L\to D_L$
 (or the projection $C_h\to D_h$) induces an isomorphism on $H^1(\cO)$.
 
But this is exactly what we established in Proposition \ref{prop48}.
 \end{proof}
 
 \begin{rem}
 Theorem \ref{thm49} probably extends to Enriques surfaces with rational double points.
 In this case, $L$ and $h$ restrict to Cossec-Verra polarizations but $\vert L+h\vert$
 is not ample. 
 \end{rem}
 
 
There is a quasi-projective dense open subscheme $U\subset \bP H^0(E)=\bP^{18}$ corresponding to
sections of $E$ whose zero scheme is a nonsingular Enriques surface. 
Theorem \ref{thm49} showed that the closed points corresponding to supersingular Enriques surfaces
form a linear subvariety of codimension $1$.

We now want to show a slightly stronger result:

Consider the universal family $\pi\colon X_U\to U$, which is a subscheme 
of $\bP^2\times\bP^2\times\bP^{18}$.
By Grauert's theorem, $R^1\pi_* \cO_{X_U}$ is a line bundle on $U$, and 
the relative Frobenius morphism on $X_U$, viewed as a map 
\begin{equation*}
(R^1\pi_*)(F^*)\colon R^1\pi_* \cO_{X_U}\to R^1\pi_* \cO_{X_U},
\end{equation*}
i.e. a function on $U$, defines a principal divisor of $U$, 
a priori potentially non-reduced,  which corresponds
to the subfamily of supersingular Enriques surfaces in our family. 

\begin{cor}
The divisor of supersingular Enriques surfaces is reduced and nonsingular.
\end{cor}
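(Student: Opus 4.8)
The plan is to realize the supersingular subscheme as the zero scheme of a section that vanishes to first order along the hyperplane already produced by Theorem \ref{thm49}. Put $\cL=R^1\pi_*\cO_{X_U}$; since $\cL$ is a line bundle, the Frobenius-linear endomorphism $(R^1\pi_*)(F^*)$ is the same datum as a section $\sigma$ of $\cL^{\otimes(1-p)}=\cL^{\vee}$ (locally the global function of the preceding paragraph), and the supersingular subscheme is $Z=\{\sigma=0\}$. By Theorem \ref{thm49} we have $Z_{\mathrm{red}}=H\cap U$, where $H=\bP(\im\psi)\subset\bP^{18}$ is a hyperplane, so $Z_{\mathrm{red}}$ is smooth, irreducible of codimension $1$. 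As $Z$ is a Cartier divisor on the smooth variety $U$ it is Cohen--Macaulay with no embedded points; hence $Z$ is reduced as soon as $\sigma$ vanishes to order exactly $1$ at the generic point of $H$, and a reduced scheme equals its reduction, so then $Z=H\cap U$ is smooth of codimension $1$. Everything therefore reduces to an order-of-vanishing computation, which I would carry out on a general pencil transverse to $H$.

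First I would identify the linear form behind $\sigma$. Restricting a section $s\in H^0E$ to $\cA$ and using the splitting $E\otimes\cO_\cA\cong\cO_\cA(3L)\oplus\cO_\cA(3h)$ of Corollary \ref{cor25} together with the uniqueness in Proposition \ref{prop43} gives a cubic $D_L(s)\in\vert 3L\vert$ depending $\cO_U$-linearly on $s$; thus the coefficient $\ell(s)$ of $abc$ in $D_L(s)$ is a linear form on $H^0E$. By the equivalence of (ii) and (iii) in Theorem \ref{thm49}, $\im\psi=\{\ell=0\}$, so $\ell$ is a nonzero linear form cutting out $H$. Choosing $s_0\in\im\psi$ general and $s_\infty\notin\im\psi$, the pencil $s_t=s_0+t\,s_\infty$ is transverse to $H$ at $t=0$, and $D_L(s_t)=D_L(s_0)+t\,D_L(s_\infty)$ has $abc$-coefficient $\ell(s_t)=t\,\ell(s_\infty)$ with $\ell(s_\infty)\ne0$; so by Proposition \ref{prop16} the cubics $D_L(s_t)$ are supersingular precisely to order $1$ at $t=0$.

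Next I would transport this from $D_L$ to $S_t$. For general $s_0\in H$ the surface $S_0$ is a smooth supersingular Enriques surface, and Propositions \ref{prop46} and \ref{prop48} give a Frobenius-compatible chain of isomorphisms $H^1\cO_{S_0}\xrightarrow{\sim}H^1\cO_{C_L}\xrightarrow{\sim}H^1\cO_{D_L(s_0)}$, the second because the Stein factorization of Proposition \ref{prop47} is an isomorphism when $D_L(s_0)$ is a smooth cubic. These identifications persist over the pencil near $t=0$ and intertwine $(R^1\pi_*)(F^*)$ with the Frobenius endomorphism of $R^1p_*\cO$ of the family $\{D_L(s_t)\}$, which by the Hasse-invariant computation of Proposition \ref{prop16} (the argument of \cite[IV.4.21]{hart}) is multiplication by $\ell(s_t)$ up to a nowhere-vanishing trivialization factor. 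Hence $\sigma(s_t)=u(t)\,\ell(s_t)=u(t)\,t\,\ell(s_\infty)$ with $u(0)\ne0$, giving order-one vanishing at $t=0$ and, by the first paragraph, the corollary.

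The main obstacle is the family version of the comparison in the third paragraph: one must know that the isomorphisms of Propositions \ref{prop46}--\ref{prop48} can be spread out algebraically over a neighborhood of the generic point of $H$ and remain compatible with the relative Frobenius, and that the resulting proportionality factor $u$ is a unit there. The key input is that for general $s_0\in H$ the cubic $D_L(s_0)$ is smooth (so that $R^1f'_*\cO=0$ in Proposition \ref{prop47} and the isomorphism of Proposition \ref{prop48} hold throughout a neighborhood), together with the classical fact that the Hasse invariant of a pencil of plane cubics is a regular function of the parameter---here linear with nonzero derivative. Should the family argument prove delicate, one can instead verify order-one vanishing on an explicit transverse pencil, for instance through the surface of Example \ref{ex48}, reducing the whole statement to a single coefficient computation.
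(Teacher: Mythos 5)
Your proposal is correct and is essentially the paper's own argument: both hinge on transporting the relative Frobenius through the Frobenius-compatible chain $R^1\pi_*\cO_{X_U}\cong R^1\pi_*\cO_{C_U}\cong R^1\pi_*\cO_{D_U}$ (the family versions of Propositions \ref{prop43} and \ref{prop46}--\ref{prop48}) and then invoking the characteristic-$2$ Hasse-invariant computation of Proposition \ref{prop16} (i.e.\ \cite[IV.4.21]{hart}) to identify the defining equation of the supersingular locus with the $abc$-coefficient of $D_L$. The only difference is packaging: the paper reads off from Corollary \ref{cor35} that this coefficient is globally the linear coordinate $\lambda_{abc}$ on $\bP H^0E$, which gives reducedness and nonsingularity in one stroke, whereas you localize at the generic point of $H$ via the Cartier-divisor/no-embedded-points reduction and check order-one vanishing on a transverse pencil; the ``family-version'' obstacle you flag is exactly what the paper's Step 1 asserts without further proof, so your argument contains no gap beyond the paper's own.
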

\begin{proof}
Corollary 29 in Appendix 1 to \cite{p2p2-I} provides an explicit parametrization of the family of all sections,
and we can use it to compute the action of Frobenius on the direct image sheaf.

\textbf{Step 1}:
Every section $s\in H^0(E)$ defines by restriction to $\cA$
two sections $p\in H^0(\cO_{\bP^2}(3))$, $q\in H^0(\cO_{\bP^2}(3))$, and we claim that the subscheme
of supersingular Enriques surfaces agrees with the subscheme of supersingular cubic curves defined by $p$.

To see this, we go back to our previous analysis.

In Proposition \ref{prop46} we constructed an effective divisor $C\subset X$ 
that represents an elliptic half-fiber on each surface.
The construction generalizes to the family over $U$ and induces a divisor $C_U\subset X_U$ such that
the morphism
\begin{equation*}
R^1\pi_*\cO_{X_U}\to R^1\pi_*\cO_{C_U}
\end{equation*}
is bijective (as the map $H^1(\cO_{X_u})\to H^1(\cO_{C_u})$ is bijective for each $u\in U$)
and commutes with the relative Frobenius.

Furthermore, by Proposition \ref{prop43}) there exists an effective divisor $D_U\in H^0\big( pr_1^*\cO_{\bP^2}(3)\big)$ 
such that the morphism
\begin{equation*}
R^1\pi_*\cO_{C_U}\to R^1\pi_*\cO_{D_U}
\end{equation*}
is bijective and commutes with relative Frobenius (we proved this for the fibers in Propositions \ref{prop47} and \ref{prop48}).

Now the divisor $D_U$ corresponds to the section $p$ mentioned in the beginning of step 1.

\textbf{Step 2}:
In order to complete the proof, we need to investigate the relative Frobenius morphism
\begin{equation*}
(R^1\pi_*)(F^*)\colon R^1\pi_* \cO_{D_U}\to R^1\pi_* \cO_{D_U}
\end{equation*}
for the family of cubic curves $D_U\in H^0\big( pr_1^*\cO_{\bP^2}(3)\big)$ over $U$, defined by $p$.

According to Corollary 29 of \cite{p2p2-I}
we can write $p = f_1\cdot a^2 + f_2\cdot b^2 + f_3\cdot c^2 + h\cdot abc$
where $\bP^2=\Proj (k[a,b,c])$, and
$f_i=\lambda_{i,a} a+ \lambda_{i,b} b+ \lambda_{i,c} c$ (for $i=1,2,3$), $h=\lambda_{abc}$
for suitably indexed coordinate functions $(\lambda_*)$ on $\bP H^0(E)$.

We now apply the discussion from the proof of \cite[4.21]{hart}: 
Identifying $H^1(\cO_{D_U})$ with \\ $H^0(\cO_{\bP^2_U}(3))$, represented by the one-dimensional 
vector space with natural basis $(abc)^{-1}$ of $H^0(\cO_{\bP^2_U}(3))$, we obtain that the image of $H^1(\cO_{D_U})$ 
under the relative Frobenius is
$(abc)^{-1}$ times the coefficient of $abc$ in $p$, hence agrees with
with the coordinate function $\lambda_{abc}$.

Therefore we can conclude that the subscheme of $U$ corresponding to supersingular Enriques surfaces is defined
by the vanishing of a coordinate function on $\bP H^0(E)$, hence is reduced and nonsingular.
\end{proof}


 \section{A reducible model}\label{reducible-sec}
 Here we discuss a reducible normal-crossing limit of the general zero-set $Y$.
 \begin{prop}
 A special zero-set of $E$ has the form
 \[(s)_0=Y_1\cup Y_2\]
 where $Y_1\simeq\P^2$, $Y_2$ is an elliptic ruled surface and $Y_1\cap Y_2$ is
 a smooth cubic in $Y_1$ and an unramified bisection of $Y_2$.
 \end{prop}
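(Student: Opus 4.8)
The plan is to manufacture the special section from the elementary-modification description of $E$. Twisting the sequence of Corollary \ref{cor25} by $L+h$ gives
\[\exseq{(p_2^*F^*Q_h)(L)}{E}{\O_\cA(3h)},\]
presenting $(p_2^*F^*Q_h)(L)$ as a rank-$2$ subsheaf of $E$ that agrees with $E$ away from $\cA$. I would take $s$ to be the image in $H^0E$ of a general section $s_1\in H^0\big((p_2^*F^*Q_h)(L)\big)$. Because $s_1$ maps to $0$ in $H^0\O_\cA(3h)$, its restriction to $\cA$ lies in the complementary summand $\O_\cA(3L)$ of $E|_\cA\cong\O_\cA(3L)\oplus\O_\cA(3h)$; hence $(s)_0=Y_1\cup Y_2$ with $Y_1=(s_1)_0$ the zero-locus of $s_1$ as a section of the subbundle and $Y_2\in|\O_\cA(3L)|$ a divisor supported on $\cA$. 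One easily checks $W:=Y_1\cap\cA\subset Y_2$, so the double curve of the union is $Y_1\cap Y_2=Y_1\cap\cA=W$. A first routine step is Chern-class bookkeeping: $[Y_1]=c_2\big((F^*Q_h)(L)\big)=L^2+2Lh+4h^2$ and $[Y_2]=3L\cdot(L+h)=3L^2+3Lh$, which sum to $c_2E=4L^2+5Lh+4h^2$.

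To prove $Y_1\simeq\P^2$, I would write $s_1$ explicitly. With coordinates $u_0,u_1,u_2$ on $\P^2_L$ and a basis $v_0,v_1,v_2$ of $H^0(Q_h)$ vanishing at the vertices $p_0,p_1,p_2$ of the coordinate triangle, a general $s_1$ has the shape $u_0v_0^2+u_1v_1^2+u_2v_2^2$, the squares recording the Frobenius twist. Near $p_0$, choosing a local frame so that $v_0^2=(x^2,y^2)$ while $v_1^2=(1,0)$, $v_2^2=(0,1)$, the two components of $s_1$ become $u_0x^2+u_1$ and $u_0y^2+u_2$; their differentials involve $du_1$ and $du_2$ (the cross terms die in characteristic $2$) and are independent, so $Y_1$ is smooth at $p_0$, and likewise everywhere. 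Then I would note that $Y_1$ is anticanonical with $K_{Y_1}=\O_{Y_1}(-L-h)$, that $p_2|_{Y_1}\colon Y_1\to\P^2_h$ is birational (its degree equals $[Y_1]\cdot h^2=1$), and that $(-K_{Y_1})^2=(L+h)^2\cdot[Y_1]=9$. Since a birational morphism of smooth surfaces drops $K^2$ by one at each blow-down, the equality $(-K_{Y_1})^2=(-K_{\P^2})^2$ forces $p_2|_{Y_1}$ to be an isomorphism, i.e. $Y_1\simeq\P^2$.

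For $Y_2$ I would invoke the duality $\P^2_L\cong(\P^2_h)^*$ induced by $\cA$ (the symmetry of Corollary \ref{cor24}) to transport $s_1$ to its transpose, whose vanishing is a plane cubic $Z\subset\P^2_L$ with equation $\sum_i u_i^t\,x_i^2$ in the dual coordinates $x_i$. One then identifies $Y_2\cong\P(Q_L|_Z)$, a $\P^1$-bundle over $Z$; since $Z$ is a smooth cubic, hence a smooth genus-one curve, $Y_2$ is an elliptic ruled surface (concretely, an elementary transform of $Z\times\P^1$ over three collinear points). I expect this identification to be routine once the transposed equation is in hand.

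The crux is the double curve $W=Y_1\cap\cA$ in its two guises. Restricting $(F^*Q_h)(L)$ to $\cA$ produces $\exseq{\O_\cA(2h-L)}{(F^*Q_h)(L)|_\cA}{\O_\cA(3L)}$, exhibiting $W$ as a zero-set of this bundle that maps to $Z\subset\P^2_L$; from $[W]=[Y_1]\cdot(L+h)=3L^2h+6Lh^2$ I read off $W\cdot L=6$ and $W\cdot h=3$. Against $\deg Z=3$ the first equality shows $W\to Z$ has degree $2$, so $W$ is a bisection of the ruling $Y_2\to Z$, while the second shows $W$ maps isomorphically onto a plane cubic in $\P^2_h$, i.e. onto a smooth cubic inside $Y_1\simeq\P^2$. \textbf{The main obstacle} is the \emph{unramifiedness} of this bisection $W\to Z$. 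The cleanest route is adjunction: triviality of $\omega_S$, written via the normal-crossing formula as the gluing of $\omega_{Y_1}(W)|_W$ and $\omega_{Y_2}(W)|_W$, forces $W\in|3L|=|-K_{Y_1}|$ on $Y_1$ and $W\in|-K_{Y_2}|$ on $Y_2$, so that $W$ is a smooth plane cubic of arithmetic genus $1$ and meets each ruling of $Y_2$ in degree $2$. Comparing this genus with that of a double cover of the elliptic curve $Z$ then forces the cover to be unramified. Here the characteristic-$2$ setting makes naive Riemann--Hurwitz delicate, so I would instead derive \'etaleness directly from the smoothness of $W$ together with the matching of dualizing sheaves along $W$; this is the step I would treat most carefully.
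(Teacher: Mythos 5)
Your proposal tracks the paper's own proof almost step for step: the same elementary-modification sequence, the same decomposition $(s)_0=Y_1\cup Y_2$ with $Y_1=(s_1)_0$ and $Y_2\in|\O_\cA(3L)|$, the same Chern-class bookkeeping, the same diagonal normal form $s_1=\sum u_iv_i^2$ with the same local smoothness check at the coordinate points, the same identification $Y_2\cong\P(Q_L|_Z)$, and the same intersection numbers $W\cdot L=6$, $W\cdot h=3$. The one genuine divergence is your proof that $Y_1\simeq\P^2$, via $(-K_{Y_1})^2=9$ together with the fact that a birational morphism of smooth surfaces drops $K^2$ by one for each blow-down; this is a clean, correct variant of the paper's anticanonical-degree argument and is arguably tidier.

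The gap is exactly the step you flagged, and it cannot be closed by either of your proposed strategies, because the unramifiedness assertion is false in the standard sense --- the characteristic-$2$ phenomenon you were worried about is precisely what happens. Every general $s_1$ is equivalent, under automorphisms of the two planes, to the diagonal section $a\,e_1^2+b\,e_2^2+c\,e_3^2$, and for that section the zero set $Y_1$ is the graph of the Frobenius morphism $\P^2_h\to\P^2_L$, $(x:y:z)\mapsto(x^2:y^2:z^2)$ (this is what your local equations $u_1=u_0x^2$, $u_2=u_0y^2$ are saying globally). Consequently $W=Y_1\cap\cA$ is the Fermat cubic $x^3+y^3+z^3=0$, $Z$ is the Fermat cubic $a^3+b^3+c^3=0$, and $W\to Z$ is $(x:y:z)\mapsto(x^2:y^2:z^2)$, i.e.\ the relative Frobenius: purely inseparable of degree $2$, every fiber a single point of length $2$, $\Omega_{W/Z}=\Omega_W\neq 0$, so the bisection is tangent to every ruling of $Y_2$ and the map is ramified at every point. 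Genus comparison cannot detect this (Frobenius twisting preserves the genus, and Riemann--Hurwitz constrains only separable maps), and smoothness of $W$ plus triviality of the dualizing sheaf of the union are equally consistent with it --- indeed one checks $T_wY_1\cap T_wY_2=T_wW$ along $W$, so $Y_1\cup_WY_2$ is still a normal crossing surface despite the tangency to the rulings. So ``\'etaleness from smoothness and adjunction'' is not salvageable. For what it is worth, the paper's own proof is defective at the same spot: it asserts that unramifiedness follows from $W$ mapping isomorphically to a plane cubic, which is a non sequitur, and the assertion itself is wrong; the correct statement is that $W$ is a purely inseparable (Frobenius) bisection of $Y_2$, and the subsequent supersingularity corollary is unaffected, since it uses only that $Y_2\to Z$ is a $\P^1$-bundle over the supersingular curve $Z$.
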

 \begin{proof} Consider the modification sequence
 \[\exseq{(p_2^*F^*Q_h)(L)}{E}{\O_\cA(3h)}.\]
 Note that $\cA$ induces a duality
 \eqspl{duality-eq}{\P^2_L\simeq\P^{2*}_h, \P^2_h\simeq\P^{2*}_L.}
 For a section $s$ of $E$ coming from a general section $s_1\in H^0((p_2^*F^*Q_h)(L))$, the zero set takes the form
 \[(s)_0=Y_1\cup Y_2.\]
 where 
 \[Y_1=(s_1)_0, Y_2\in|\O_\cA(3L)|,\]
 \[[Y_1]=c_2(F^*Q_h(L))=4h^2+2hL+L^2, [Y_2]=(L+h).3L=3L^2+3Lh.\] 
 Now $s_1$ has the form $u_0v^2_0+u_1v^2_1+u_2v^2_2$ where $u.$ are homogeneous coordinates
 on $\P^{2}_L$ , and the $v.$ are a basis for $H^0(Q_h)$ vanishing respectively
 at $p_0, p_1, p_2$ that are the vertices of the coordinates triangle. 
 Clearly $Y_1$ is smooth off the $p_i$. At $p_0$, we may assume that, with respect to a suitable local basis for $Q_h$,
 $v_0^2$ has the form $(x^2, y^2)$ while $v_1^2=(1,0), v_2^2=(0,1)$,
  and the local equations for $Y_1$ are 
 $x^2u_0+u_1=0, y^2u_0^2+u_2=0$. Hence $Y_1$ is smooth at $p_0$
 and likewise at the other $p_i$. Thus $Y_1$ is smooth everywhere. \par
 Note that  because $\cA$ is smooth over $\P^2_L$, this argument
  also proves that $W=Y_1\cap \cA$ is smooth.
 \par
 Note that $Y_1$ is anticanonical, i.e. $K_{Y_1}=\O_{Y_1}(-L-h)$
 and because  has class $[Y_1]=4h^2+2hL+L^2$,  $Y_1$ maps birationally to $\P^2_h$
 and under the  Segre embedding $Y_1\subset \P^2_L\times\P^2_h\to \P^8$, which
 is also an anticanonical map, $Y_1$ has degree $9=(-K_{Y_1})^2=h^0(K_{Y_1})-1$.
 It follows that $Y_1$ must project isomorphically to $\P^2_h$ and its image in $\P^8$ is a projection of its full anticanonical
 image in $\P^9$.
 \par 
 As for $Y_2$, it  is a ruled surface of the form $\P(Q_L|_Z)$ where $Z\subset\P^2_L$ is the cubic
 with equation, in the above notation, $u_0^tx^2_0+u_1^tx^2_1+u_2^tx^2_2$
 where $x_i$ are the coordinates dual to $v_i$ by the duality \eqref{duality-eq}, hence 
 $Z$ is smooth. 
 $Y_2$ may be obtained from $Z\times\P^1$ by elementary transformations (blowing up
 a point and blowing down the proper transform of its fibre) at 3 collinear points.\par
 As for the intersection $W=Y_1\cap \cA$, it is a zero-set of $F^*Q_h(L)|_\cA$ which fits in an exact sequence
 \[\exseq{\O_\cA(2h-L)}{F^*Q_h(L)|_\cA}{\O_\cA(3L)}\]
 hence $W$ projects to $Z$ in $\P^2_L$. Moreover
 \[ [W]=[Y_1].(L+h)=6h^2L+3L^2h\]
 hence $W.L=6$, hence $W$ maps with degree 2 to $Z$. On the other hand $W.h=3$
 sp $W$ maps isomorphically to a cubic in $\P^2_h$. This also implies that the map $W\to Z$
 is unramified.\par
 Note that the fact that $\omega_{Y_1\cup_W Y_2}=\omega_{Y_1}(W)\cup\omega_{Y_2}(W)$
 is the trivial bundle a priori forces $W$ to be a cubic in $Y_1$ and a bisection in $Y_2$.
 \end{proof}
 The Mayer-Vietoris sequence
 \[\exseq{\O_{Y_0}}{\O_{Y_1}\oplus\O_{Y_2}}{\O_W}\]
 yields an injection $H^1(\O_{Y_0})\to H^1(\O_{Y_2})$ (induced by restriction). 
 Since $Y_2\to Z$ is a $\P^1$-bundle, we have an isomorphism of
 1-dimensional vector spaces  $H^1(\O_Z)\simeq H^1(\O_{Y_2})$
 induced by pullback. Since $H^1(\O_{Y_0})\neq 0$, we conclude
 $H^1(\O_{Y_0})\simeq H^1(\O_{Y_2})\simeq H^1(\O_Z)$.
 Both these isomorphisms are compatible with the action of Frobenius.
 As the equation of $Z$ has no $x_0x_1x_2$ term, $Z$ is supersingular by \cite{hart}, IV.4.21.
 Hence we conclude
 \begin{cor}\label{ss-cor}
 Notations as above, $Y_0$ is supersingular.
 \end{cor}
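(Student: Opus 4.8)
The plan is to reduce the supersingularity of $Y_0$ — the assertion that Frobenius acts as $0$ on $H^1(\O_{Y_0})$ — to the already-understood supersingularity of the smooth plane cubic $Z$, by exhibiting a Frobenius-equivariant chain of isomorphisms $H^1(\O_{Y_0})\simeq H^1(\O_{Y_2})\simeq H^1(\O_Z)$. All the geometric input has been assembled in the discussion above; the content of the proof is the observation that each link in the chain is a map of $k$-vector spaces induced by a genuine morphism of schemes, hence automatically commutes with the Frobenius endomorphism, so that the known computation of $F^*$ on $H^1(\O_Z)$ transports verbatim to $Y_0$.

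First I would take cohomology in the Mayer--Vietoris sequence $\exseq{\O_{Y_0}}{\O_{Y_1}\oplus\O_{Y_2}}{\O_W}$. Since $W$ is connected the map $H^0(\O_{Y_1})\oplus H^0(\O_{Y_2})\to H^0(\O_W)$ is surjective, so restriction identifies $H^1(\O_{Y_0})$ with a subspace of $H^1(\O_{Y_1})\oplus H^1(\O_{Y_2})$; and as $Y_1\simeq\P^2$ forces $H^1(\O_{Y_1})=0$, this is an injection $H^1(\O_{Y_0})\hookrightarrow H^1(\O_{Y_2})$. Next, because $Y_2\to Z$ is a $\P^1$-bundle one has $\pi_*\O_{Y_2}=\O_Z$ and $R^1\pi_*\O_{Y_2}=0$, so the Leray sequence yields a pullback isomorphism $H^1(\O_Z)\simeq H^1(\O_{Y_2})$. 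To upgrade the inclusion to an isomorphism I would pin down dimensions: the Mayer--Vietoris Euler characteristic gives $\chi(\O_{Y_0})=\chi(\O_{\P^2})+\chi(\O_{Y_2})-\chi(\O_W)=1+0-0=1$, while the dualizing sheaf $\omega_{Y_0}=\omega_{Y_1}(W)\cup\omega_{Y_2}(W)$ is trivial (as already noted), so $h^0=h^2=1$ and therefore $h^1(\O_{Y_0})=1\neq 0$. Since $H^1(\O_{Y_2})\simeq H^1(\O_Z)$ is likewise one-dimensional, the injection is forced to be an isomorphism.

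The remaining point is that $Z$ is supersingular. Here I would invoke Proposition \ref{prop16} (the general form of Hartshorne~IV.4.21) together with Example \ref{ex17}: the cubic $Z$ has equation $u_0^t x_0^2+u_1^t x_1^2+u_2^t x_2^2$, and every monomial of such an expression carries a repeated coordinate square, so its $x_0x_1x_2$-coefficient vanishes; hence $F^*=0$ on $H^1(\O_Z)$. Transporting this through the chain of isomorphisms then gives $F^*=0$ on $H^1(\O_{Y_0})$, which is exactly the supersingularity of $Y_0$.

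I expect the only genuine subtlety to be the Frobenius-equivariance of the identification $H^1(\O_{Y_0})\simeq H^1(\O_Z)$ — one must be certain that degenerating to a reducible, non-normal surface does not disturb the naturality. This is handled by the functoriality of Frobenius: both the Mayer--Vietoris restriction and the Leray pullback are induced by honest morphisms ($Y_i\hookrightarrow Y_0$ and $Y_2\to Z$), and Frobenius is compatible with all of them, so no separate computation of the Frobenius action directly on $Y_0$ is required beyond the cubic computation on $Z$.
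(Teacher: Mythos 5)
Your proof is correct and follows essentially the same route as the paper: the Mayer--Vietoris injection $H^1(\O_{Y_0})\hookrightarrow H^1(\O_{Y_2})$, the $\P^1$-bundle isomorphism $H^1(\O_{Y_2})\simeq H^1(\O_Z)$, Frobenius-equivariance of both maps, and the observation that the equation of $Z$ has no $x_0x_1x_2$ term. The only difference is that you supply an explicit justification (via $\chi(\O_{Y_0})=1$, triviality of $\omega_{Y_0}$, and Serre duality) for the nonvanishing $H^1(\O_{Y_0})\neq 0$, which the paper asserts without proof.
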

 Note that in the family of all pairs $(E, s)$ so that the zero-set $(s)_0=Y_1\cup Y_2$ as above has codimension
 10 in the family of all pairs $(E,s)$ where $(s)_0$ is a smooth or normal-
 crossing surface. On the other hand,  the locus where $(s)_0$ is supersingular is a 
 divisor. Corollary \ref{ss-cor}
 implies that this divisor is nontrivial. A priori,  a general surface
 in this divisor may have finitely many double points. However the results of
\S \ref{ss-sec}, specifically Theorem \ref{thm49},
proven  by other means, shows that a general supersingular zero-set is in fact smooth. 


\section{Moduli}\label{moduli-sec}
As we saw earlier, there is a bijection between the set of bundles $E$
and the set of smooth divisors in $|L+h|$. This bijection is in fact
an isomorphism of moduli spaces:
\begin{thm}
(i) Let $U\subset|L+h|$ be the subset parametrizing smooth divisors and let 
$\cA_U\subset U\times\P^2\times\P^2$ be the universal divisor. Then the kernel
$\cE$ of the natural surjection
\[F^*Q_h\to\O_{\cA_U}(2L)\]
is a vector bundle whose restriction $E_u$ on  $u\times\P^2\times\P^2$ for each  
$u\in U$ is indecomposable
and coincides with one of the bundles $E$ studied above
and every such bundle occurs in this way; moreover for each $u$, $\cE$ is the universal 
deformation of $E_u$.\par
(ii) There is an isomorphism between the space $V$ of pairs $(E, s)$ where $s\in H^0(E)$
has smooth zero-set and the space $V'$ of pairs $(S, \Phi_L\times\Phi_h)$ where $S$
is a smooth nonclassical Enriques surface and $\Phi_L\times\Phi_h:S\to\P^2\times\P^2$
is an embedding whose image $S'$ has $S'.L^2=S'.h^2=4, S'.L.h=5$.
\end{thm}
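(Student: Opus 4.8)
The plan for part (i) is to recognize $\cE$ as the family version of the elementary modification of Theorem \ref{thm22}. Over the universal divisor $\cA_U$ the canonical map $pr_2^*F^*Q_h\to\O_{\cA_U}(2L)$ restricts, on each slice $u\times\P^2\times\P^2$, to the map $\pi_2^*F^*Q_h\to\O_\cA(2L)$ appearing in Theorem \ref{thm22}, whose kernel is $E_u(-2L-h)$. Since $\O_{\cA_U}(2L)$ is flat over $U$ and the map is fibrewise surjective, the kernel $\cE$ is flat over $U$ with locally free fibers; hence $\cE$ is a vector bundle and, up to the fixed twist by $\O(-2L-h)$, each restriction $E_u$ is one of the bundles of Part 1. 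Every smooth $\cA\in|L+h|$ arises, and by Remark \ref{rmk5} and Corollary \ref{cor111} the divisor $\cA$ is recovered from $E_u$ as its locus of jumping lines; thus $u\mapsto E_u$ is a bijection onto the set of our bundles. Indecomposability of $E_u$ follows from the cohomology table of Theorem \ref{coh_groups} together with the jumping behaviour of Proposition \ref{p2}: a splitting into line bundles $\O(a_1,b_1)\oplus\O(a_2,b_2)$ is incompatible with the existence of a genuine jumping divisor on the fibers.

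For the universal-deformation assertion I would first establish that $E_u$ is simple and then match tangent spaces. Simplicity means $\Hom(E,E)=H^0(E\otimes E^\vee)=H^0((E\otimes E)(-3L-3h))=k$, using $E^\vee\cong E(-3L-3h)$; the first-order deformations are $\Ext^1(E,E)=H^1((E\otimes E)(-3L-3h))$. I would compute both groups from the monad \eqref{monad-e0} (for instance with the Macaulay2 presentation already used in the paper), obtaining $\Hom(E,E)=k$ and $\dim\Ext^1(E,E)=8=\dim U$. Finally, the Kodaira--Spencer map $T_uU\to\Ext^1(E_u,E_u)$ is injective because the jumping-locus construction is functorial in families: a nonzero tangent vector at $u$ produces a nontrivial first-order motion of $\cA$ inside $|L+h|$, hence a nontrivial first-order deformation of $E_u$. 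As source and target both have dimension $8$ and $U$ is smooth, Kodaira--Spencer is an isomorphism and $\cE/U$ is the universal deformation of each $E_u$.

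For part (ii) I would produce the two maps and check they are mutually inverse, in families. The map $V\to V'$ sends $(E,s)$ to its zero scheme $S=Z(s)$ with the inclusion $S\hookrightarrow\P^2\times\P^2$: by Corollary \ref{cor29} $S$ is smooth, by Theorem \ref{thm43} it is a nonclassical Enriques surface, the two projections restrict to the maps $\Phi_L,\Phi_h$ attached to the Cossec--Verra polarizations $L_S,h_S$, and the image has the required numbers $L_S^2=h_S^2=4$, $L_S\cdot h_S=5$. The map $V'\to V$ is Serre's construction: for $(S,\Phi_L\times\Phi_h)$ with image $S'$ of cycle class $4L^2+5Lh+4h^2$, triviality of $K_S$ forces $\det N_{S'/\P^2\times\P^2}=\O_{S'}(3L+3h)$, so $S'$ is the zero scheme of a section of a rank-$2$ bundle with $c_1=3L+3h$ and $c_2=4L^2+5Lh+4h^2$; the extension datum lies in the one-dimensional space $H^0(\O_{S'})$, so this bundle is unique, and by the recovery result of \S\ref{embeddings} it is one of our $E$. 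The two constructions are visibly inverse on points, and running both over an arbitrary base (checking compatibility with base change) upgrades the bijection to an isomorphism of the quasi-projective parameter spaces $V$ and $V'$.

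I expect the genuine obstacles to be twofold. In (ii) the delicate step is the backward direction: proving that the embedded surface $S'$ determines the bundle $E$ and that this bundle lies in our family, which is precisely the content of \S\ref{embeddings} and forces one to use the special characteristic-$2$ geometry (the $abc/xyz$-term dichotomy of Example \ref{ex17} and the splitting of $E$ on $\cA$). In (i) the analogous delicate point is the deformation-theoretic comparison: verifying $\dim\Ext^1(E,E)=8$ and that Kodaira--Spencer is an isomorphism, i.e.\ that \emph{all} first-order deformations of $E$ come from moving $\cA$ in $|L+h|$.
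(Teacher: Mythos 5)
Your part (ii) and most of part (i) follow the paper's own route. The mutually inverse maps are the zero-set map and the Serre construction (with uniqueness of the Serre bundle, and with the recovery result of \S\ref{embeddings}, Theorem \ref{surface-to-bundle-thm}, guaranteeing that the Serre bundle lies in the family -- the paper relies on this implicitly as well), and the bijection between bundles and smooth divisors $\cA\in|L+h|$ is mediated by the elementary modification of Theorem \ref{thm22} in one direction and by the degeneracy (jumping) divisor of $p_2^*p_{2*}E\to E$ in the other, exactly as in the paper's proof. Your indecomposability argument via the jumping lines of Proposition \ref{p2} is correct (the paper leaves this point implicit).

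Where you genuinely diverge is the universal-deformation claim, and there your proposal has a real gap. The paper never computes $\Hom(E,E)$ or $\Ext^1(E,E)$: it observes that both constructions, $E\mapsto\cA$ (degeneracy divisor) and $\cA\mapsto E$ (kernel of $F^*Q_h\to\O_{\cA}(2L)$, suitably twisted), make sense over an arbitrary local base and remain mutually inverse there; hence the deformation functor of $E_u$ is isomorphic to the deformation functor of $\cA_u$ inside $|L+h|$, i.e.\ to the germ of $U$ at $u$. Pro-representability, smoothness, $8$-dimensionality, and universality of $\cE$ all follow at once, with no cohomology computation. You instead propose simplicity, $\dim\Ext^1(E,E)=8$, and injectivity of Kodaira--Spencer. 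That strategy is legitimate in principle, but as written its two cohomological inputs are only asserted (``I would compute \dots obtaining $\Hom(E,E)=k$ and $\dim\Ext^1(E,E)=8$''), and in characteristic $2$ this is not a routine verification (for instance $E\otimes E$ does not split into symmetric and alternating parts), so nothing in your text actually establishes surjectivity of Kodaira--Spencer, which is what universality hinges on. Note that your own injectivity argument already contains the repair: you invoke functoriality of the degeneracy-divisor construction in families. Applying that same functoriality to an \emph{arbitrary} first-order (indeed arbitrary Artinian) deformation of $E_u$, rather than only to deformations pulled back from $U$, shows directly that every deformation of $E_u$ is induced by a deformation of $\cA$; this is precisely the paper's argument, it yields surjectivity of Kodaira--Spencer without knowing $\dim\Ext^1$, and it renders the deferred computation unnecessary.
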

\begin{proof}
By construction each bundle $E$ as above corresponds to a smooth divisor $\cA$,
hence to a point $u\in U$ and conversely.
 The divisor $\cA$ depends canonically on $E$ as the degeneracy divisor of the map
\[p_h^*p_{h*}E\to E.\]
Conversely given a smooth divisor $\cA\in |L+h|$, it determines a bundle $E$ as a suitable twist
of the kernel of the canonical map
\[F^*Q_h\to\O_\cA(2L).\]
Moreover the correspondence $E\leftrightarrow \cA$ clearly extends to deformations over any local scheme $S$: given a bundle $E_S$ on $S\times\P^2\times\P^2$, we get a divisor $\cA_S$
on $S\times \P^2\times\P^2$ as the degeneracy locus of
$p_2^*p_{h*}E_S\to E_S$ and conversely given a divisor $\cA_S$ we get $E_S$ as the kernel
of $F^*Q_h\to\O_{\cA_S}(2L)$.
Therefore the deformation space of $E$ can be identified with $U$
and in particular it is smooth and 8-dimensional.\par

Now because $h^0(E)=19, h^1(E)=0$ for each $E=E_u$,
$\bP(\pi_{U*}(\cE))$ is smooth and relatively 18-dimensional over $U$.
Let $V\subset \bP(\pi_{U*}(\cE))$ be the open subset of sections with smooth zero-sets.
The $V$ is   the deformation space of  pairs $(E, k s)$  ($k$ the ground field),
where $s\in H^0(E)$ has smooth (Enriques) zero-set, and $V$ is smooth and 26 dimensional.\par
On the other hand the deformation space $V'$ of pairs $(Y, \Phi)$ where $Y$ is an Enriques surface
(10 moduli) and $\Phi:Y\to\P^2_L\times\P^2_h$ is an embedding
with $L^2=h^2=4, L.h=5$ (uniquely determined
by $(Y, L, h)$ up to an automorphism of $\P^2_L\times\P^2_h$)
is smooth of relative dimension 16 over the 10-dimensional moduli space $M$
of Enriques surfaces. 
There are mutually inverse maps
\[V\stackrel{\alpha}{\to}V'\stackrel{\beta}{\to}V\]
where $\alpha$ is the zero-set map and $\beta$ is given by the Serre construction.
Hence $\alpha, \beta$ are isomorphisms.
We have established an isomorphism between deformation spaces
 of $(E, s)$ and of $(Y, \Phi)$. This implies in particular that a general zero set $Y$ is general in
 the moduli of Enriques surfaces, and that  for a general Enriques surface
 in characteristic 2 there exist divisors $L, h$ with $L^2=h^2=4, L.h=5$ such that 
 $\Phi_L\times \Phi_h:Y\to\P^2\times\P^2$ is an embedding.\end{proof}

\section{From surface to bundle}\label{embeddings}

The purpose of this section is to prove Theorem \ref{surface-to-bundle-thm} below
which asserts that every non-classical
Enriques surface of bidegree (4,4) in $\P^2\times\P^2$ occurs as
a zero-set one of the 'monadic' bundles
constructed above.

Note that the Chow group of $2$-cycles on $\P^2\times\P^2$ has a basis
that consists of $L^2$, $Lh$ and $h^2$. \par
Also, note at the outset that given a nef and big line bundle $L$ on a 
nonclassical  Enriques surface $S$ (i.e. $K_S=0$), we have
$h^1(L)=0$ \cite[2.1.7]{D}, hence $h^0(L)=\chi(L)=L^2/2+1$.
We assume \footnote{Recall the analogy with $\P^4$: All
smooth surfaces in $\P^4$ are linearly normal except the Veronese surface.} 
in the following that $S\subset\P^2\times\P^2$ 
is 'bilinearly normal', i.e.  it is not a 'linear projection' from a higher-dimensional $\P^r\times\P^s$.
This is equivalent to $\cO(L)$, $\cO(h)$ restricting to Cossec-Verra polarizations on 
$S$ (i.e., $L_S^2=h_S^2=4$, and $\Phi(L_S)=\Phi(h_S)=2$).


Setting $\lambda=L_S\cdot h_S$, the self-intersection formula for $S$ (see
\cite[App. A 4.1.3]{hart}) now shows that $\lambda^2-9\lambda+20=0$, 
hence $\lambda=4$ or $\lambda=5$.

We first analyze the case $\lambda=4$. Then we have $(L_S-h_S)^2=0$, and the Hodge index theorem implies that
$L_S-h_S$ is numerically trivial, and since this class must be nontrivial, necessarily $S$ is a classical
Enriques surface and $h_S=L_S\otimes K_S$.

Conversely, given a Cossec-Verra polarization $L$ on a classical Enriques surface $S$,
there is a birational contraction $f\colon S\to X$ to a Gorenstein surface $X$ such that
$\vert L\vert\times\vert L\otimes K_X\vert$ defines an embedding of $X$ into
$\bP^2\times\bP^2$ \cite[3.4.7]{D}.

Continuing with the case $\lambda=4$, since $K_S=h\otimes L^{-1}$, the canonical bundle is restriction of the global 
line bundle $\cO(-1,1)$ on $\bP^2\times\bP^2$. According to Serre's theorem \cite{okonek},
there exists a rank $2$ vector bundle on $\bP^2\times\bP^2$
with a section $s$ whose zero scheme is the given surface.
These bundles have been constructed by Casnati and Ekedahl \cite[6.5]{CE}
who showed that they are (up to twist) pullbacks of a bundle  from one of the factors 
$\P^2$, so they are not 'new'. The corresponding bundles on $\P^2$  are in turn
 pullbacks  of
the tangent bundle   by  quadratic maps;  
they have a minimal graded resolution $0\to \cO(-2)^{\oplus 3} \to \cO\to \cF\to 0$,
and their moduli were described by Barth \cite{barth-vbP2}. Both \cite{CE} and \cite{barth-vbP2}
are written in characteristic 0 but it seems likely that they largely extend, with
some exceptions,  to characteristic $p$. 

Now assume $\lambda=5$, i.e., we have an embedding $S\to\bP^2\times\bP^2$ of an
Enriques surface such that the class of the surface $S$ is $4h^2+5hL+4L^2$.

We do not know if there exist classical Enriques surfaces with such an embedding%
\footnote{As discussed in section 2.3, every Enriques surface with no $(-2)$-curves
admits a morphism $S\to\bP^2\times\bP^2$, and the cycle of the image will realize
this class. However, the image of $S$ under the Segre embedding $\bP^2\times\bP^2\to \bP^8$
is a linear projection from the embedding $S\to\bP^9$ defined by $L_S+h_S$. 
Hence existence depends on analyzing the projection map.}.

If $S$ is non-classical, then the canonical bundle is trivial, and there exists a rank $2$ 
vector bundle $E$ on $\bP^2\times\bP^2$ with a section $s$ whose vanishing scheme is $S$.

In the remainder of this section we want to show that the bundle $E$ belongs to the 
family of bundles that we constructed in \cite{p2p2-I}, Part 2,  as the cohomology of a monad.

\begin{lem}
$S$ uniquely determines the bundle $E$.
\end{lem}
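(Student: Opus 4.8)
The plan is to realize $E$ through the Serre (Hartshorne--Serre) correspondence and to show that the governing extension group is one-dimensional. Write $X=\bP^2\times\bP^2$. Since $S$ is the zero scheme of a section of the rank-$2$ bundle $E$ and has codimension $2$, adjunction gives $\omega_S=(\omega_X\otimes\det E)|_S$; as $S$ is non-classical we have $\omega_S=\O_S$, so $(\det E)|_S=\omega_X^{-1}|_S=\O_S(3L+3h)$. Because the intersection matrix $\left(\begin{smallmatrix}L_S^2 & L_S\cdot h_S\\ h_S\cdot L_S & h_S^2\end{smallmatrix}\right)=\left(\begin{smallmatrix}4&5\\5&4\end{smallmatrix}\right)$ has nonzero determinant, the classes $L_S,h_S$ are numerically independent, so the restriction $\Pic(X)=\bZ L\oplus\bZ h\to\NS(S)$ is injective and $\det E=\O(3L+3h)$ is forced. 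Every such $E$ then arises from an extension
\[0\to\O_X\to E\to\I_S(3L+3h)\to 0,\]
and two extensions give isomorphic bundles exactly when their classes agree up to a nonzero scalar. Thus it suffices to prove that $\Ext^1(\I_S(3L+3h),\O_X)$ is one-dimensional.

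To compute this group I would use the local Ext sheaves attached to $0\to\I_S\to\O_X\to\O_S\to 0$. Since $S$ is Gorenstein of pure codimension $2$, one has $\mathcal{E}xt^q(\O_S,\O_X)=0$ for $q\neq 2$ and $\mathcal{E}xt^2(\O_S,\O_X)=\omega_S\otimes\omega_X^{-1}|_S=\O_S(3L+3h)$. Feeding this into the long exact sequence of $\mathcal{E}xt^\bullet(-,\O_X)$ yields
\[\mathcal{E}xt^0(\I_S,\O_X)=\O_X,\qquad \mathcal{E}xt^1(\I_S,\O_X)\cong\mathcal{E}xt^2(\O_S,\O_X)=\O_S(3L+3h),\]
together with $\mathcal{E}xt^q(\I_S,\O_X)=0$ for $q\geq 2$. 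Twisting by $\O(-3L-3h)$ gives $\mathcal{E}xt^0(\I_S(3L+3h),\O_X)=\O_X(-3L-3h)$ and $\mathcal{E}xt^1(\I_S(3L+3h),\O_X)=\O_S$.

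Finally I would run the local-to-global spectral sequence $E_2^{p,q}=H^p(X,\mathcal{E}xt^q(\I_S(3L+3h),\O_X))\Rightarrow\Ext^{p+q}$. The two groups contributing to $\Ext^1$ are $E_2^{1,0}=H^1(X,\O(-3L-3h))$ and $E_2^{0,1}=H^0(S,\O_S)$. By the K\"unneth formula $H^i(\bP^2,\O(-3))$ is nonzero only for $i=2$, so $H^1(X,\O(-3L-3h))=H^2(X,\O(-3L-3h))=0$; in particular $E_2^{1,0}=0$ and the differential $d_2\colon E_2^{0,1}\to E_2^{2,0}=H^2(X,\O(-3L-3h))=0$ vanishes. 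Hence $\Ext^1(\I_S(3L+3h),\O_X)=H^0(S,\O_S)=k$, which is one-dimensional since $S$ is connected. The unique nonzero class maps to the constant section $1$ under $\Ext^1\to H^0(\mathcal{E}xt^1(\I_S(3L+3h),\O_X))=H^0(\O_S)$, which is nowhere vanishing, so the Cayley--Bacharach/local-freeness condition is automatic and every nonzero class produces a locally free $E$. Therefore $E$ is determined by $S$ up to isomorphism.

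The main obstacle is the identification $\mathcal{E}xt^1(\I_S,\O_X)\cong\O_S(3L+3h)$, which rests on $S$ being Gorenstein of pure codimension $2$, so that its dualizing sheaf is the line bundle $\O_S$ and the adjunction bookkeeping is valid; once this and the two vanishings $H^1(\O(-3L-3h))=H^2(\O(-3L-3h))=0$ are in hand, the one-dimensionality of $\Ext^1$, and hence the uniqueness of $E$, follow formally.
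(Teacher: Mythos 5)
Your proof is correct and takes essentially the same route as the paper: both arguments run through the Serre correspondence and pin down $\det E=\O(3L+3h)$ using the fact that the intersection matrix of $L_S,h_S$ (with $L_S\cdot h_S=5$) is nondegenerate, so that $\Pic(\bP^2\times\bP^2)\to\Pic(S)$ is injective. The only difference is that you explicitly verify the remaining ingredient --- the one-dimensionality of $\Ext^1(\I_S(3L+3h),\O_X)\cong H^0(\O_S)$ via local Ext sheaves, the local-to-global spectral sequence and the K\"unneth vanishing of $H^1$ and $H^2$ of $\O(-3L-3h)$ --- whereas the paper delegates exactly this part of the bundle--line-bundle correspondence to its cited Serre-construction reference.
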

\begin{proof}
The Serre construction  \cite{okonek}, says that given $S$, the choice of bundle $E$
corresponds to that of a line bundle  $\eta\in \Pic(\bP^2\times\bP^2)=\Z\oplus\Z$ 
that restricts on $S$ to $K_S$.
Since $L_S\cdot h_S=5, L_S^2=h_S^2=4$, the restrictions $L_S$ resp. $h_S$ of the bundles $\cO_{\bP^2}(1,0)$ and
$\cO_{\bP^2}(0,1)$ are
linearly independent in $\Pic(S)$, so the restriction map
$\Pic(\P^2\times\P^2)\to\Pic(S)$ is injective.  Thus $\eta$ is uniquely determined, hence so
is  $E$, up to isomorphism.
\end{proof}

\begin{rem}
Let $L$ be an ample Cossec-Verra polarization on a classical Enriques surface (any characteristic).
$\vert L\times( L+K_S)\vert$ determines an embedding $S\to \bP^2\times\bP^2$ \cite[3.4]{D}.

The canonical sheaf of $S$ is $2$-torsion, hence we have
\begin{equation*}
K_S \cong h_S-L_S\cong L_S-h_S\cong (2i+1)L_S-(2i+1)h_S, \forall i\in\Z,
\end{equation*}
and $S$ can be represented in infinitely many different ways as the zero section of a rank $2$ bundle.
As mentioned above, up to twist these bundles arise by pullback from one of the factors of $\P^2$.
\end{rem}

Our strategy will be to reconstruct $E$ from its (higher) direct images under $pr_2$ via the
relative Beilinson spectral sequence. To calculate the direct images, we need to make use
of the two auxiliary exact sequences $0\to\cO\to E\to \cI_S(3,3)\to 0$ and
$0\to \cI_S \to \cO\to \cO_S\to 0$, and work backwards, starting with the determination of the
higher direct images of $\cO_S$.

As mentioned earlier, the line bundles $L_S=\cO_S(1,0)$ and $h_S=\cO_S(0,1)$ are Cossec-Verra polarizations,
i.e., they are basepointfree and have self-intersection $4$. 
This implies that $h^0(L_S)=h^0(h_S)=3$ and the higher cohomology groups of $L_S$ and $h_S$ vanish.

\begin{prop}
$pr_{2,*}\cO_S\cong \cO_{\bP^2}\oplus \cO_{\bP^2}(-3)\oplus \Omega^1_{\bP^2}$.
\end{prop}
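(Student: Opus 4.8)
The plan is to study the degree-$4$ finite map $\Phi_h=pr_2|_S\colon S\to\bP^2_h$ induced by the Cossec--Verra polarization $h_S=\cO_S(0,1)$, and to compute $\mathcal{G}:=pr_{2,*}\cO_S=\Phi_{h,*}\cO_S$ directly from the geometry of this cover. First I would verify that $\mathcal{G}$ is locally free of rank $4$ and that $R^ipr_{2,*}\cO_S=0$ for $i>0$. When $h_S$ is ample this is immediate: $\Phi_h$ is finite, $S$ is Cohen--Macaulay and $\bP^2$ regular, so by miracle flatness $\Phi_h$ is flat and $\Phi_{h,*}\cO_S$ is locally free of rank $h_S^2=4$. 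In the non-ample case (Example \ref{ex48}) I would factor $\Phi_h$ through the contraction $\pi\colon S\to X$ of the finitely many $(-2)$-curves on which $h_S$ is trivial; the singularities of $X$ are rational double points, so $\pi_*\cO_S=\cO_X$, $R^1\pi_*\cO_S=0$, and the induced $\bar\Phi\colon X\to\bP^2$ is finite (no curve is $\bar h$-trivial, $\bar h$ being ample), reducing to the previous case. Riemann--Roch on $S$, using $\omega_S=\cO_S$ and $h_S^2=4$, then gives $\chi(\mathcal{G}(n))=\chi(\cO_S\otimes h_S^{n})=1+2n^2$, so $\mathcal{G}$ has rank $4$, $c_1=-6$, $c_2=12$.

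Next I would record two structural facts. Relative duality for the finite flat $\Phi_h$ gives $\mathcal{G}^\vee\cong\Phi_{h,*}\omega_{S/\bP^2}$; since $\omega_S=\cO_S$ we have $\omega_{S/\bP^2}=\Phi_h^*\cO(3)=h_S^{3}$, and the projection formula yields the self-duality
\[
\mathcal{G}^\vee\cong\mathcal{G}(3).
\]
For the intermediate cohomology, $R^ipr_{2,*}\cO_S=0$ ($i>0$) and Leray give $H^1(\bP^2,\mathcal{G}(n))\cong H^1(S,h_S^{n})$. This equals $H^1(\cO_S)\cong k$ for $n=0$ (the irregularity of a non-classical Enriques surface), vanishes for $n\ge 1$ by the nef-and-big vanishing theorem \cite[2.1.7]{D}, and vanishes for $n\le -1$ by Serre duality on $S$ (using $K_S=0$). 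Hence the graded module $\bigoplus_n H^1(\bP^2,\mathcal{G}(n))$ is one-dimensional, concentrated in degree $0$.

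Finally I would identify $\mathcal{G}$ via Horrocks' correspondence: on $\bP^2$ a vector bundle is determined, up to a direct sum of line bundles, by its intermediate cohomology module. Since this module for $\mathcal{G}$ agrees with that of $\Omega^1_{\bP^2}$ (both equal $k$ in degree $0$) and $\Omega^1_{\bP^2}$ is indecomposable with no free summand, we obtain $\mathcal{G}\cong\Omega^1_{\bP^2}\oplus\bigoplus_i\cO(a_i)$. Ranks force exactly two line-bundle summands $\cO(a_1)\oplus\cO(a_2)$; then $c_1(\mathcal{G})=-6$ gives $a_1+a_2=-3$, and $c_2(\mathcal{G})=12$ gives, using $c_1(\Omega^1_{\bP^2})=-3$ and $c_2(\Omega^1_{\bP^2})=3$, the relation $a_1a_2=0$, whence $\{a_1,a_2\}=\{0,-3\}$ and $\mathcal{G}\cong\cO\oplus\cO(-3)\oplus\Omega^1_{\bP^2}$. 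The main obstacle is less the final bookkeeping than the structural input: in characteristic $2$ the naive trace splitting of the degree-$4$ cover is unavailable, so one cannot split off $\cO$ by hand, and it is precisely the non-classicality $h^1\cO_S=1$ that produces the one-dimensional intermediate cohomology forcing the $\Omega^1_{\bP^2}$ summand through Horrocks; the other delicate point is establishing that $\mathcal{G}$ is genuinely locally free (with vanishing higher direct images) in the non-ample case.
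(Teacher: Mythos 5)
Your proposal is correct, and after the (necessarily shared) first step it takes a genuinely different route from the paper's. Both arguments begin the same way: factor $pr_2|_S$ through the contraction $\pi\colon S\to X$ of the $h$-trivial $(-2)$-curves, use that rational double points give $\pi_*\cO_S=\cO_X$ and $R^1\pi_*\cO_S=0$, and that $X\to\bP^2$ is finite (hence flat by miracle flatness, $X$ being Gorenstein), so $pr_{2,*}\cO_S$ is locally free of rank $4$ with vanishing higher direct images. From there the paper splits off the unit section to get $0\to\cO\to pr_{2,*}\cO_S\to\cE\to 0$ with $\cE$ of rank $3$, computes $h^i\cE(l)$ for $-1\le l\le 2$, identifies $\cE\cong\Omega^1_{\bP^2}\oplus\cO(-3)$ via the Beilinson spectral sequence for $\cE(1)$, and then checks by hand that two extension classes vanish. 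You instead keep the full rank-$4$ sheaf, extract $c_1=-6$, $c_2=12$ from $\chi(h_S^n)=2n^2+1$, compute the entire module $H^1_*(pr_{2,*}\cO_S)$ (one-dimensional, concentrated in degree $0$ --- exactly where the two cohomological inputs of the paper's table enter: non-classicality $h^1\cO_S=1$ and nef-and-big vanishing \cite[2.1.7]{D}), and invoke the Horrocks correspondence on $\bP^2$ plus Krull--Schmidt to force $\Omega^1_{\bP^2}\oplus\cO(a_1)\oplus\cO(a_2)$ with $\{a_1,a_2\}=\{0,-3\}$ determined by Chern classes. What your route buys: the paper's two splitting arguments come for free, absorbed into the stable-equivalence statement, and the cohomology bookkeeping is lighter (only $H^1$ in all twists plus Euler characteristics, rather than a full $h^i$ table). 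What it costs: the load-bearing black box --- that a bundle on $\bP^2$ is determined up to direct sums of line bundles by its graded module $H^1_*$ --- needs a precise citation or a short proof (it is characteristic-free, e.g.\ via minimal free resolutions of $\Gamma_*$ and graded duality), whereas the paper relies only on the standard Beilinson machinery. Two small touch-ups: the recorded self-duality $(pr_{2,*}\cO_S)^\vee\cong(pr_{2,*}\cO_S)(3)$ is never used in your argument and could be dropped, and in the non-ample case (Example \ref{ex48}) it should in any event be derived from the finite flat cover $X\to\bP^2$ using $\omega_X\cong\cO_X$, since $S\to\bP^2$ itself is not finite there.
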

\begin{proof}
There is a birational contraction of smooth rational $(-2)$-curves $\pi\colon S\to X$ such that $\pi_*L_S$
is ample on $X$, and the induced morphism $X\to\bP^2$ is finite \cite[2.4.16]{D}.

Therefore the higher direct images of $\cO_S$ under $pr_2$ are zero, and we have 
isomorphisms $H^i (pr_{2,*}\cO_S(0,l))\cong H^i (\cO_S(0,l))$ for any $i$, $l$.

Considering the composition $S \to X\to \bP^2$, we obtain a
short exact sequence 
\begin{equation}\label{eq111}
0\to \cO_{\bP^2}\to pr_{2,*}\cO_S\to\cE\to 0, 
\end{equation}
with a locally free sheaf $\cE$ of rank $3$ on $\bP^2$.

We now use the sequence \eqref{eq111} to determine the cohomology groups $h^i\cE(l)$. Since the higher
cohomology groups of a big and nef line bundle on an Enriques surface vanish \cite[2.1.16]{D}, we find
(where empty slots mean 0)
\medskip

\begin{tabular}{c c | *{6}c }
& 2 & 3&1&&  \\
$i$ &1  &  &1 &  &  \\
& 0 &&&&3 \\
\hline
&& -1 & 0 & 1 & 2 \\
&&&& $l$
\end{tabular}
\medskip

\noindent The Beilinson spectral sequence for $\cE(1)$ shows that there is
an exact sequence $0\to \Omega^1(1)\to\cE(1)\to \cE'\to 0$ where $\cE'$ fits into
$0\to \cE'\to \cO(-1)^{\oplus 3}\to \Omega^1(1)\to 0$. The second sequence implies that
$\cE'\cong \cO(-2)$, and therefore the extension class of the first sequence is zero.
We obtain $\cE(1)\cong\Omega^1(1)\oplus\cO(-2)$.

The extension class of the sequence \eqref{eq111} vanishes as well, hence $pr_{2,*}\cO_S\cong\cE\oplus\cO$.
\end{proof}

\begin{prop}
$pr_{2,*}\cO_S(1,0)\cong \cO_{\bP^2}^{\oplus 3}\oplus \cO_{\bP^2}(-1)$.
\end{prop}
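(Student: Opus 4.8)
The plan is to follow the strategy of the preceding proposition: first reduce the determination of $pr_{2,*}\cO_S(1,0)$ to a cohomology calculation on $S$, and then reconstruct the bundle on $\bP^2_h$ from its cohomology table by running the Beilinson spectral sequence for $pr_2$.

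First I would record that the higher direct images vanish. Writing $\pi\colon S\to X$ for the contraction of the $(-2)$-curves, with $pr_2$ factoring through the finite morphism $X\to\bP^2$ \cite[2.4.16]{D}, the line bundle $\cO_S(1,0)=L_S$ is the pullback of an ample bundle on $X$; by the projection formula $R^1\pi_*\cO_S(1,0)=L_S\otimes R^1\pi_*\cO_S=0$, and finiteness of $X\to\bP^2$ then gives $R^i pr_{2,*}\cO_S(1,0)=0$ for $i>0$. Consequently $h^i\big(pr_{2,*}\cO_S(1,0)\otimes\cO(l)\big)=h^i\cO_S(L+lh)$ for all $i,l$.

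Next I would compute these groups for $l=0,-1,-2$, which are exactly the three columns Beilinson requires. Using $\chi\cO_S(aL+bh)=\tfrac12(aL+bh)^2+1=2a^2+5ab+2b^2+1$ together with $L_S^2=h_S^2=4$, $L_S\cdot h_S=5$: (a) $L_S$ is a big and nef Cossec--Verra polarization, so its higher cohomology vanishes \cite[2.1.16]{D} and $h^0\cO_S(L)=\chi=3$; (b) for $L-h$ one has $(L-h)\cdot L=-1<0$ with $L_S$ nef, so $L-h$ is not effective and $h^0=0$, while $h^2\cO_S(L-h)=h^0\cO_S(h-L)=0$ since $(h-L)\cdot h=-1<0$ with $h_S$ nef; as $\chi=0$, all three groups vanish; (c) for $L-2h$ again $(L-2h)\cdot L<0$ forces $h^0=0$, and Serre duality gives $h^2\cO_S(L-2h)=h^0\cO_S(2h-L)=1$ because $2h-L$ is the half-fiber $C_h$ (the symmetric analogue of $C_L$ in Proposition \ref{prop46}, with $h^0=1$); since $\chi=1$ we get $h^1=0$. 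This exhibits the rank-$4$ sheaf $\F:=pr_{2,*}\cO_S(1,0)$ (with $c_1\F=-1$) whose Beilinson data are $H^0\F=k^{3}$, a vanishing $\F(-1)$ column, and $H^2\F(-2)=k$ as the only other nonzero term.

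Finally I would run the Beilinson spectral sequence $E_1^{p,q}=H^q\big(\F(p)\big)\otimes\Omega^{-p}_{\bP^2}(-p)$, $-2\le p\le 0$, converging to $\F$. By the table the only nonzero terms are $E_1^{0,0}=\cO^{\oplus 3}$ and $E_1^{-2,2}=\Omega^2_{\bP^2}(2)=\cO(-1)$, both on the diagonal $p+q=0$, so there is no room for differentials, $E_1=E_\infty$, and $\F$ fits in a short exact sequence $0\to\cO^{\oplus 3}\to\F\to\cO(-1)\to 0$. Since $\Ext^1\big(\cO(-1),\cO^{\oplus 3}\big)=H^1\cO_{\bP^2}(1)^{\oplus 3}=0$, this sequence splits and $pr_{2,*}\cO_S(1,0)\cong\cO_{\bP^2}^{\oplus 3}\oplus\cO_{\bP^2}(-1)$. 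The main obstacle is the cohomology bookkeeping of the middle step, in particular pinning down the single nonzero value $h^2\cO_S(L-2h)=1$ by recognizing $2h-L$ as an effective half-fiber; once the table is correct, the Beilinson mechanics and the splitting are forced.
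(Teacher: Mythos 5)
Your overall architecture is exactly the paper's: kill the higher direct images, compute the three cohomology columns $h^i\cO_S(1,l)$ for $l=0,-1,-2$, run the relative Beilinson spectral sequence, and split the resulting extension (your columns for $l=0,-1$ and the Beilinson/splitting step agree with the paper). However, your justification of the vanishing $R^i pr_{2,*}\cO_S(1,0)=0$ does not work as stated. If $\pi\colon S\to X$ is the contraction through which $pr_2$ factors, then the curves contracted by $\pi$ are precisely the $(-2)$-curves $C$ with $h_S\cdot C=0$; since $\cO_S(1,1)$ is ample, every such curve has $L_S\cdot C>0$. Hence $L_S$ is \emph{not} a pullback from $X$ (a pullback has degree $0$ on every contracted curve -- what descends to $X$ is $h_S$, not $L_S$), and the projection formula cannot be invoked. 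The correct argument, which is the paper's, uses this positivity in the opposite way: because $L_S$ has positive degree on each contracted rational curve, $R^1\pi_*L_S=0$ (theorem on formal functions / rationality of the double points), and finiteness of $X\to\bP^2$ then yields the vanishing of the higher direct images under $pr_2$.

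The second and more serious gap is in the column $l=-2$, which you yourself single out as the crux. You obtain $h^2\cO_S(1,-2)=h^0\cO_S(-1,2)=1$ by invoking (the symmetric analogue of) Proposition \ref{prop46}. But that proposition and its proof -- which rely on the divisor $\cA$, the splitting $E\otimes\cO_\cA\cong\cO_\cA(3L)\oplus\cO_\cA(3h)$, and the cohomology of the monad bundle -- are established only for surfaces already known to be zero schemes of the monad bundle $E$. The proposition you are proving lives in \S\ref{embeddings}, where $S$ is an abstract bilinearly normal nonclassical Enriques surface and the goal (Theorem \ref{surface-to-bundle-thm}) is precisely to show that $S$ is such a zero scheme; citing Proposition \ref{prop46} here is circular. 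What comes for free is effectivity: $h^2\cO_S(-1,2)=h^0\cO_S(1,-2)=0$ and $\chi\cO_S(-1,2)=1$ give $h^0\cO_S(-1,2)\ge 1$. The real content is $h^1\cO_S(-1,2)=0$, i.e.\ $h^0\cO_S(-1,2)=1$ exactly, and the paper proves this directly: if $h^0\cO_S(-1,2)\ge 2$, then since $(2h-L)^2=0$ the class $2h_S-L_S$ would define a genus-one pencil and hence be divisible by $2$ in $\Num(S)$ \cite[2.2.9]{D}, which is impossible because $(2h_S-L_S)\cdot h_S=3$ is odd. With that substitution, and the repair of the direct-image argument above, your proof goes through and coincides with the paper's.
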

\begin{proof}
As explained in the beginning of the proof of the previous proposition, the fibers of $pr_2$
are finite with the potential exception of a finite number of smooth rational curves.

$\cO_S(1,0)$ has positive degree on these curves, and therefore we obtain (as in the previous proof)
that the higher direct images of $\cO_S(1,0)$ under $pr_2$ vanish, and there are isomorphisms
$H^i (pr_{2,*}\cO_S(1,l))\cong H^i (\cO_S(1,l))$ for any $i$, $l$.

We now determine the cohomology groups $h^i\cO_S(1,l)$ for $-2\le l\le 0$:
\medskip

\begin{tabular}{c c | *{6}c }
& 2 & 1&&&  \\
$i$ &1  &  & &  &  \\
& 0 & && 3 \\
\hline
&& -2 & -1 & 0  \\
&&& $l$
\end{tabular}
\medskip

$l=0$: Higher cohomology groups of a big and nef line bundle vanish.

$l=-1$: The line bundle $\cO_S(1,-1)$ is non-trivial and has intersection $0$ with the ample divisor $L+h$,
hence it has no sections, and its inverse also has no sections, thus (by duality) $h^2=0$. The vanishing of $h^1$ follows
from Riemann-Roch on $S$.

$l=-2$: $\cO_S(1,-2)$ has negative intersection with $L+h$, hence it has no sections. If $h^1>0$, then $\cO_S(-1,2)$
would be an elliptic pencil and thus have a half-fiber \cite[2.2.9]{D}, i.e., the divisor class would be be divisible by $2$.
But this contradicts the primitivity of this divisor class. Hence
$h^1=0$ and $h^2=1$.

Finally, the Beilinson spectral sequence provides an exact sequence
\begin{equation*}
0\to \cO_{\bP^2}^{\oplus 3} \to pr_{2,*}\cO_S(1,0) \to \cO_{\bP^2}(-1) \to 0
\end{equation*}
which necessarily splits.
\end{proof}

\begin{cor}\label{corr5}
The direct image sheaves $R^i pr_{2,*}\cI_S(l,0)$ in the range $0\le l\le 1$ are as follows:
\smallskip

\emph{
\begin{tabular}{c c | *{6}c }
& 2 & &  \\
$i$ & 1 &  $\cO_{\bP^2}(-3)\oplus\Omega^1_{\bP^2}$ & $\cO_{\bP^2}(-1)$   \\
& 0 && \\
\hline
&& 0 & 1  \\
&& $l$
\end{tabular}
}
\smallskip
\end{cor}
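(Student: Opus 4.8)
The plan is to push the ideal-sheaf sequence forward along $pr_2$ and feed in the two immediately preceding propositions, which computed $pr_{2,*}\cO_S$ and $pr_{2,*}\cO_S(1,0)$. For each twist $l\in\{0,1\}$ I would start from
\[
0\to \cI_S(l,0)\to \cO(l,0)\to \cO_S(l,0)\to 0,
\]
apply $pr_{2,*}$, and write out the long exact sequence of higher direct images.

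Next I would record the inputs. Since $\cO(l,0)=pr_1^*\cO_{\bP^2}(l)$, flat base change gives $pr_{2,*}\cO(l,0)=H^0(\bP^2,\cO(l))\otimes\cO_{\bP^2}$, namely $\cO_{\bP^2}$ for $l=0$ and $\cO_{\bP^2}^{\oplus 3}$ for $l=1$, with all higher direct images vanishing. The two preceding propositions supply $pr_{2,*}\cO_S=\cO_{\bP^2}\oplus\cO_{\bP^2}(-3)\oplus\Omega^1_{\bP^2}$ and $pr_{2,*}\cO_S(1,0)=\cO_{\bP^2}^{\oplus 3}\oplus\cO_{\bP^2}(-1)$, again with all higher direct images zero (the fibers of $S\to\bP^2_h$ being finite away from finitely many contracted rational curves, on which the twist is non-negative). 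Because every higher direct image of both $\cO(l,0)$ and $\cO_S(l,0)$ vanishes, the long exact sequence collapses to the four-term sequence
\[
0\to pr_{2,*}\cI_S(l,0)\to pr_{2,*}\cO(l,0)\xrightarrow{\ r_l\ } pr_{2,*}\cO_S(l,0)\to R^1pr_{2,*}\cI_S(l,0)\to 0,
\]
and it immediately yields $R^2pr_{2,*}\cI_S(l,0)=0$, which accounts for the empty bottom row of the table.

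It then remains to analyze the restriction map $r_l$. For $l=0$ this is exactly the inclusion $\cO_{\bP^2}\hookrightarrow pr_{2,*}\cO_S$ of \eqref{eq111}, so $pr_{2,*}\cI_S=\ker r_0=0$ and $R^1pr_{2,*}\cI_S=\coker r_0=\cO_{\bP^2}(-3)\oplus\Omega^1_{\bP^2}$. For $l=1$ I would identify $r_1$ with the first map of the split sequence $0\to\cO_{\bP^2}^{\oplus 3}\to pr_{2,*}\cO_S(1,0)\to\cO_{\bP^2}(-1)\to 0$ produced by the preceding proposition; this gives $pr_{2,*}\cI_S(1,0)=0$ and $R^1pr_{2,*}\cI_S(1,0)=\cO_{\bP^2}(-1)$, completing the table.

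The main obstacle is precisely this last identification: one must confirm that the free summand $\cO_{\bP^2}^{\oplus h^0(\cO(l))}$ produced by the Beilinson argument in each preceding proposition really is the image of $r_l$, i.e.\ is generated by the restrictions to $S$ of the coordinate forms pulled back from $\bP^2_L$, rather than being merely an abstractly isomorphic subsheaf. This amounts to injectivity of $r_l$. That injectivity I would deduce from $pr_1\colon S\to\bP^2_L$ being dominant (indeed finite of degree $4$, since $[S]\cdot L^2=4$): no form of bidegree $(l,0)$ with $l\le 1$ can vanish on $S$, so $H^0\cI_S(l,0)=0$, and since $pr_{2,*}\cI_S(l,0)\subset pr_{2,*}\cO(l,0)$ is torsion-free, checking that its generic rank is $0$ (for $l=1$, that the four points of a general fiber are not collinear) forces it to vanish. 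Once injectivity and the identification with the structural subsheaf are in hand, the cokernels are forced and the table follows.
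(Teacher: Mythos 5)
Your framework---pushing $0\to\cI_S(l,0)\to\cO(l,0)\to\cO_S(l,0)\to 0$ forward along $pr_2$, noting that all higher direct images of the middle and right-hand terms vanish, and reading off kernel and cokernel of the induced map $r_l$---is exactly the paper's, and your $l=0$ column is complete: $r_0$ is by construction the unit map of \eqref{eq111}, so its kernel is $0$ and its cokernel is $\cO_{\bP^2}(-3)\oplus\Omega^1_{\bP^2}$. The gap is in the $l=1$ case. Your route to sheaf-injectivity of $r_1$ requires that $pr_{2,*}\cI_S(1,0)$, a torsion-free subsheaf of $\cO_{\bP^2}^{\oplus 3}$, have generic rank $0$, i.e.\ that the four points of $S$ lying over a general point of $\bP^2_h$ be non-collinear. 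You flag this as a check to perform but never perform it, and it is not an innocuous verification: a torsion-free subsheaf with no global sections can perfectly well have positive rank (e.g.\ $\cO(-1)\subset\cO^{\oplus 3}$), so your observation $H^0\cI_S(1,0)=0$ does not give it, and the non-collinearity statement is essentially equivalent to the vanishing $pr_{2,*}\cI_S(1,0)=0$ that you are trying to prove. Ruling out that $S$ lies on a divisor of class $L+dh$ swept out by such lines would need a separate argument, and at this point in the paper one cannot quote the cohomology table of the monadic bundle, since identifying $E$ with that bundle is the whole goal of the section.

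The paper closes this step formally, with ingredients you already have in hand. Since $\Hom(\cO_{\bP^2},\cO_{\bP^2}(-1))=H^0(\cO_{\bP^2}(-1))=0$, any map $\cO_{\bP^2}^{\oplus 3}\to pr_{2,*}\cO_S(1,0)\cong\cO_{\bP^2}^{\oplus 3}\oplus\cO_{\bP^2}(-1)$ automatically factors through the free summand, where it is given by the constant matrix $H^0(r_1)$, i.e.\ by the restriction map $H^0\cO(1,0)\to H^0\cO_S(1,0)$. That restriction is injective (your dominance argument for $pr_1|_S$) and both sides are $3$-dimensional ($h^0L_S=3$ because $L_S$ is a Cossec--Verra polarization), hence it is bijective; therefore $r_1$ is an isomorphism onto the free summand, with kernel $0$ and cokernel $\cO_{\bP^2}(-1)$, and no geometric input about the fibers is needed. (The same $\Hom$-vanishing shows that sheaf-injectivity of $r_1$ alone would indeed suffice, as you assert; it is only your proposed proof of that injectivity which is circular.) Non-collinearity of the general fiber then comes out as a consequence of the corollary, not as an input to it.
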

\begin{proof}
Using the previous two propositions, the table follows from the exact sequence 
$0\to\cI_S\to\cO\to\cO_S\to 0$ by twisting (for $l=1$)
and taking direct images under $pr_2$.

For $l=0$ we obtain
\begin{equation*}
\begin{tikzcd}[column sep=small, row sep=small]
0 \ar[r] & pr_{2,*}\cI_S \ar[r] & pr_{2,*}\cO_{\bP^2} \ar[r] \ar[d,equals] & pr_{2,*}\cO_S \ar[d,equals]  \ar[r] & R^1pr_{2,*}\cI_S \ar[r] & 0\\
&& \cO_{\bP^2} & \cO_{\bP^2}\oplus\cO_{\bP^2}(-3)\oplus\Omega^1_{\bP^2}
\end{tikzcd}
\end{equation*}
Since $H^0(\cO_{\bP^2})\to H^0(\cO_S)$ is bijective, we obtain the column for $l=0$.

For $l=1$ we obtain
\begin{equation*}
\begin{tikzcd}[column sep=small, row sep=small]
0 \ar[r] & pr_{2,*}\cI_S(1,0) \ar[r] & pr_{2,*}\cO_{\bP^2}(1,0) \ar[r] \ar[d,equals] & pr_{2,*}\cO_S(1,0) \ar[d,equals]  \ar[r] & R^1pr_{2,*}\cI_S(1,0) \ar[r] & 0\\
&& \cO_{\bP^2}^{\oplus 3} & \cO_{\bP^2}^{\oplus 3}\oplus\cO_{\bP^2}(-1)
\end{tikzcd}
\end{equation*}
Since $H^0(\cO_{\bP^2}(1,0))\to H^0(\cO_S(1,0))$ is bijective, we obtain the column for $l=1$.
\end{proof}

\begin{prop}
The direct image sheaves $R^i pr_{2,*}E(l,-3)$ in the range $-4\le l\le -2$ are as follows:
\smallskip

\emph{
\begin{tabular}{c c | *{6}c }
& 2 & &&  \\
$i$ & 1 &$\cO_{\bP^2}(-2)$ & $\Omega^1_{\bP^2}$ & $\cO_{\bP^2}(-1)$   \\
& 0 &&& \\
\hline
&& -4 & -3 & -2  \\
&&& $l$
\end{tabular}
}
\end{prop}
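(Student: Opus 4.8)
The plan is to push the two defining short exact sequences $\exseq{\cO}{E}{\cI_S(3,3)}$ and $\exseq{\cI_S}{\cO}{\cO_S}$ down along $pr_2$, after a suitable twist, and to read off the table from the resulting long exact sequences together with Corollary \ref{corr5}. Twisting the first sequence by $\cO(l,-3)$ gives
\[\exseq{\cO(l,-3)}{E(l,-3)}{\cI_S(l+3,0)},\]
so for $-4\le l\le -2$ I need the direct images of $\cI_S(l+3,0)$ for $l+3\in\{-1,0,1\}$, together with those of $\cO(l,-3)$. The latter are immediate from the projection formula: $R^qpr_{2,*}\cO(l,-3)=H^q(\bP^2_L,\cO(l))\otimes\cO_{\bP^2}(-3)$, which vanishes for $l=-2$, equals $\cO_{\bP^2}(-3)$ in degree $q=2$ for $l=-3$, and equals $\cO_{\bP^2}(-3)^{\oplus 3}$ in degree $q=2$ for $l=-4$.

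For $l=-2$ and $l=-3$ the sheaves $\cI_S(1,0)$ and $\cI_S(0,0)$ are covered by Corollary \ref{corr5}. When $l=-2$ the pushforward of $\cO(-2,-3)$ vanishes, so $R^\bullet pr_{2,*}E(-2,-3)=R^\bullet pr_{2,*}\cI_S(1,0)$, giving $\cO_{\bP^2}(-1)$ in degree $1$. When $l=-3$ the long exact sequence reads
\[0\to R^1pr_{2,*}E(-3,-3)\to\cO_{\bP^2}(-3)\oplus\Omega^1_{\bP^2}\xrightarrow{\ \delta\ }\cO_{\bP^2}(-3)\to R^2pr_{2,*}E(-3,-3)\to 0.\]
Since $\Hom(\Omega^1_{\bP^2},\cO(-3))=H^0(T_{\bP^2}(-3))=0$, the map $\delta$ kills the $\Omega^1$ summand; one then checks that $\delta$ is an isomorphism on the $\cO(-3)$ summand, equivalently that $R^2pr_{2,*}E(-3,-3)=0$. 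This last vanishing I would get fibrewise, from $H^2(\bP^2_L,E_x(-3))=0$: the restriction $E_x$ is a globally generated rank-$2$ bundle with $c_1=3$, $c_2=4$, exactly as in Proposition \ref{p2}, so relative duality computes $H^2(E_x(-3))$ as $H^0(E_x^\vee)^{*}$, which vanishes. Hence $R^1pr_{2,*}E(-3,-3)=\Omega^1_{\bP^2}$ and the higher term vanishes.

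The case $l=-4$ is the crux, since $\cI_S(-1,0)$ lies outside the range of Corollary \ref{corr5}. I would compute it from $\exseq{\cI_S(-1,0)}{\cO(-1,0)}{\cO_S(-1,0)}$: as $\cO(-1,0)$ has no direct images under $pr_2$, one obtains $R^{q+1}pr_{2,*}\cI_S(-1,0)\cong R^qpr_{2,*}\cO_S(-L)$, so everything reduces to identifying $\cF:=pr_{2,*}\cO_S(-L)$ and to proving $R^1pr_{2,*}\cO_S(-L)=0$. Since $\Phi_h$ realizes $pr_2|_S$ as a degree-$4$ map to $\bP^2_h$, finite away from the $(-2)$-curves contracted by $L_S$, I expect $\cF$ to be a rank-$4$ bundle with $H^\bullet(\bP^2,\cF(m))=H^\bullet(S,\cO_S(-L+mh))$; computing this table (here $\chi=2m^2-5m+3$, with $h^0(\cO_S(-L))=h^1(\cO_S(-L))=0$ and $h^2(\cO_S(-L))=h^0(\cO_S(L))=3$) and feeding it into the Beilinson spectral sequence on $\bP^2_h$ should identify $\cF$ as an extension $\exseq{\cO_{\bP^2}(-2)}{\cF}{\cO_{\bP^2}(-3)^{\oplus 3}}$, exactly in the style of the identification of $pr_{2,*}\cO_S$. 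Because $\Hom(\cO(-2),\cO(-3)^{\oplus 3})=0$, the connecting map $\cF\to R^2pr_{2,*}\cO(-4,-3)=\cO_{\bP^2}(-3)^{\oplus 3}$ annihilates the sub–line bundle $\cO(-2)$, and is surjective by the fibrewise vanishing $R^2pr_{2,*}E(-4,-3)=0$ (again from $H^2(E_x(-3))=0$); therefore $R^1pr_{2,*}E(-4,-3)=\cO_{\bP^2}(-2)$ and the remaining groups vanish.

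The main obstacle is precisely this $l=-4$ column, i.e.\ controlling $\cF=pr_{2,*}\cO_S(-L)$. The two delicate points are the vanishing $R^1pr_{2,*}\cO_S(-L)=0$ — automatic when $S$ is unnodal, so that $pr_2|_S$ is finite, but for nodal $S$ requiring a check that no contracted $(-2)$-curve $R$ has $L\cdot R\ge 2$ (since $\cO_S(-L)$ has negative degree there, in contrast to the positivity used for $\cO_S$ and $\cO_S(1,0)$ in the previous propositions) — and the Beilinson identification of the rank-$4$ bundle $\cF$ from its cohomology table. Once $\cF$ is pinned down as the stated extension, the remaining entries of the table fall out of the two long exact sequences and the elementary $\Hom$-vanishings above.
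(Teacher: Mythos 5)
Your column for $l=-2$ coincides with the paper's, but for the two harder columns you miss the single idea that the paper's proof turns on: relative duality for $pr_2$ combined with the self-duality $E^\vee\cong E(-3,-3)$ (valid since $E$ has rank $2$ and $\det E=\cO(3,3)$). The paper gets the key vanishing $R^2pr_{2,*}E(-3,-3)=0$ by dualizing, $\big(R^2pr_{2,*}E(-3,-3)\big)^\vee\cong pr_{2,*}\big(E(-3,-3)\big)\otimes\cO_{\bP^2}(3)$, and the right-hand side is zero because $pr_{2,*}E(-3,-3)\cong pr_{2,*}\cI_S=0$ already follows from the twisted Serre sequence and Corollary \ref{corr5}; likewise the entire $l=-4$ column is obtained in one line as the $\cO_{\bP^2}(3)$-twisted dual of the $l=-2$ column, via $\big(R^ipr_{2,*}E(-4,-3)\big)^\vee\cong\big(R^{2-i}pr_{2,*}E(-2,-3)\big)\otimes\cO_{\bP^2}(3)$. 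Your substitutes for these two steps both have genuine gaps. For $l=-3$ you deduce $H^2(E_x(-3))=0$ from the claim that $E_x$ is globally generated ``exactly as in Proposition \ref{p2}''; but in this section $E$ is only the Serre-construction bundle attached to the embedded surface $S$, and Proposition \ref{p2} describes the monadic bundle --- the very object this proposition is helping to identify $E$ with --- so that appeal is circular. One can prove $H^0(E_x^\vee)=H^0(E_x(-3))=0$ directly from the restricted Serre sequence when $S$ meets the fibre $\bP^2_L\times\{x\}$ in a finite scheme, but $R^2pr_{2,*}$ satisfies base change, so the vanishing is needed on \emph{every} fibre, including the finitely many fibres containing $(-2)$-curves contracted by $h_S$ (these do occur, cf.\ Example \ref{ex48}); on such a fibre the section of $E_x$ vanishes along a divisor, the cokernel of $\cO\to E_x$ acquires torsion, and the simple argument breaks down --- which is exactly where a nonzero, finitely supported $R^2$ could hide.

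The $l=-4$ column is worse: your reduction to $\cF=pr_{2,*}\cO_S(-L)$ replaces the paper's one-line duality by a substantial computation that you do not carry out. Both of its delicate points are left open: the vanishing $R^1pr_{2,*}\cO_S(-L)=0$, which for nodal $S$ requires bounding $L\cdot R$ for every $h$-contracted $(-2)$-curve $R$ (you name the condition but do not verify it), and the Beilinson identification of the rank-$4$ sheaf $\cF$, which requires not only the cohomology of the twists $\cO_S(-L+mh)$ but also groups of the form $H^\bullet\big(\cF\otimes\Omega^1_{\bP^2}(1)\big)$ together with an analysis of the differentials and extension classes (compare the care the paper takes in the analogous identifications of $pr_{2,*}\cO_S$ and $pr_{2,*}\cO_S(1,0)$). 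As written, the proposal establishes only the $l=-2$ column; the other two columns need either the duality argument above or a completion of the missing fibre-by-fibre and Beilinson analyses.
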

\begin{proof}
We propose to twist the exact sequence $0\to \cO\to E\to \cI_S(3,3)\to 0$ and
apply direct images under $pr_2$.

Twisting by $\cO(-2,-3)$ and taking direct images, we find that
\begin{equation*}
R^i pr_{2,*}E(-2,-3)\cong R^i pr_{2,*}\cI(1,0)
\end{equation*}
for any $i$, and the column for $l=-2$ follows from Corollary \ref{corr5}.

Twisting by $\cO(-3,-3)$ and taking direct images, we find
that $pr_{2,*}E(-3,-3)=pr_{2,*}\cI_S=0$, and further that there is
an exact sequence of sheaves on $\bP^2$
\begin{equation*}
\begin{tikzcd}[column sep=tiny, row sep=small]
0\ar[r] & R^1 pr_{2,*} E(-3,-3) \ar[r] & R^1 pr_{2,*} \cI_S \ar[r] \ar[d, equals] & 
R^2 pr_{2,*}\cO(-3,-3) \ar[r] \ar[d,equals] & R^2 pr_{2,*} E(-3,-3).\\
&& \cO(-3)\oplus\Omega^1 & \cO(-3)
\end{tikzcd}
\end{equation*}
Relative duality implies for the last term that
\begin{equation*}
\big(R^2 pr_{2,*} E(-3,-3)\big)^\vee 
\cong pr_{2,*} \big(E(-3,-3)\big)\otimes\cO_{\bP^2}(3)=0,
\end{equation*}
hence $R^1 pr_{2,*} E(-3,-3)$ is the kernel of an epimorphism 
$\cO(-3)\oplus\Omega^1 \to \cO(-3)$. Since there is no surjection $\Omega^1 \to \cO(-3)$,
the induced map $\cO(-3)\to\cO(-3)$ must be non-trivial, hence an isomorphism, and
we conclude that $R^1 pr_{2,*} E(-3,-3)\cong \Omega^1$.

This establishes the column for $l=-3$.

Finally, relative duality implies that
\begin{equation*}
\big(R^i pr_{2,*} E(-4,-3)\big)^\vee\cong \big(R^{2-i} pr_{2,*} E(-2,-3)\big)\otimes\cO_{\bP^2}(3),
\end{equation*}
hence the entries in the column for $l=-2$ imply the entries for $l=-4$.
\end{proof}
We can now state and prove the main result of this section:
\begin{thm}\label{surface-to-bundle-thm}
 Let $S$ be a smooth nonclassical Enriques surface in $\P^2\times\P^2$
such that $L^2.S=h^2.S=4$. Then $S$ is a zero set of a rank-2 vector bundle 
$E$ that  is the homology sheaf of a monad of the form
\begin{equation*}
0\to \cO(L+h) \to Q_L(L) \otimes Q_h(h) \to \cO(2L+2h) \to 0.
\end{equation*}
\end{thm}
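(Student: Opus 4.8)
The plan is to reconstruct the Serre bundle of $S$ by the relative Beilinson spectral sequence along $pr_2$ and read off the monad directly from the higher direct images computed just above. First I would record the starting data. Since $S$ is nonclassical, the value $\lambda=L_S\cdot h_S=4$ permitted by the self-intersection formula is excluded (it forces $L_S-h_S$ numerically trivial by the Hodge index theorem, hence $S$ classical), so $\lambda=5$. Then $L_S,h_S$ are independent in $\Pic(S)$, the line bundle on $\P^2\times\P^2$ restricting to $K_S=\cO_S$ is unique, and Serre's construction produces a uniquely determined rank-$2$ bundle $E$ (this is the Lemma above) carrying a section with zero scheme $S$; its invariants are $c_1E=3L+3h$ and $[S]=c_2E=4L^2+5Lh+4h^2$.

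Next I would feed the direct images into the relative Beilinson spectral sequence of $pr_2\colon\P^2_L\times\P^2_h\to\P^2_h$ (fibre $\P^2_L$). Applied to $\mathcal G=E(-2,-3)$ it is the spectral sequence with
\[E_1^{-i,1}=R^1pr_{2,*}\big(\mathcal G(-iL)\big)\otimes pr_1^*\big(\Omega^i_{\P^2_L}(iL)\big),\qquad i=0,1,2,\]
abutting to $\mathcal G$ in the middle. By the preceding Proposition every direct image with $q\neq1$ vanishes in the relevant range, so only the $q=1$ row survives; the sequence degenerates at $E_2$, exhibiting $\mathcal G$ as the homology of the three-term complex $E_1^{-2,1}\to E_1^{-1,1}\to E_1^{0,1}$, and degeneration forces the left arrow to be a subbundle inclusion and the right arrow a surjection.

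It remains to identify the three terms, using $\Omega^1_{\P^2}\cong Q(-2)$ and $\Omega^2_{\P^2}(2)\cong\cO(-1)$. The two outer terms are $R^1pr_{2,*}E(-4,-3)\otimes\cO(-L)=\cO(-L-2h)$ and $R^1pr_{2,*}E(-2,-3)=\cO(-h)$, while the middle term, since $\Omega^1_{\P^2_L}(L)\cong Q_L(-L)$ and $R^1pr_{2,*}E(-3,-3)\cong\Omega^1_{\P^2_h}\cong Q_h(-2h)$, is
\[E_1^{-1,1}=R^1pr_{2,*}E(-3,-3)\otimes pr_1^*\big(Q_L(-L)\big)=pr_2^*\big(Q_h(-2h)\big)\otimes pr_1^*\big(Q_L(-L)\big)=(Q_L\otimes Q_h)(-L-2h).\]
Hence $E(-2,-3)$ is the homology of
\[\exseq{\cO(-L-2h)}{(Q_L\otimes Q_h)(-L-2h)}{\cO(-h)},\]
and twisting by $\cO(2L+3h)$ (which carries $E(-2,-3)$ to $E$, the middle term to $Q_L(L)\otimes Q_h(h)$, and the outer terms to $\cO(L+h)$ and $\cO(2L+2h)$) yields exactly the asserted monad
\[\exseq{\cO(L+h)}{Q_L(L)\otimes Q_h(h)}{\cO(2L+2h)}.\]
This exhibits $E$ as the homology of a monad of the required type, so $S$, being the zero scheme of a section of $E$, is a zero set of one of the monadic bundles of \S\ref{construction}.

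The genuine work lies not in the spectral-sequence bookkeeping but in the direct-image computation that precedes it, above all the identification $R^1pr_{2,*}E(-3,-3)\cong\Omega^1_{\P^2}$: this is the term that manufactures the factor $Q_h$ in the middle of the monad, and it is where the hypotheses on $S$ are actually used (via the auxiliary sequences $\exseq{\cO}{E}{\cI_S(3,3)}$ and $\exseq{\cI_S}{\cO}{\cO_S}$, relative duality, and the absence of a surjection $\Omega^1\to\cO(-3)$). The only point needing care in the present argument is that the relative Beilinson formalism and its degeneration remain valid in characteristic $2$, which they do, being a purely formal consequence of the vanishing of all but one row.
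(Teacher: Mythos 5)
Your proposal is correct and follows essentially the same route as the paper: the paper likewise feeds the direct-image table $R^1pr_{2,*}E(-4,-3)=\cO(-2)$, $R^1pr_{2,*}E(-3,-3)=\Omega^1$, $R^1pr_{2,*}E(-2,-3)=\cO(-1)$ into the relative Beilinson spectral sequence along $pr_2$, exhibits $E(-2,-3)$ as the middle homology of the complex $0\to\cO(-1,-2)\to pr_1^*\Omega^1(1)\otimes pr_2^*\Omega^1\to\cO(0,-1)\to 0$, and twists by $\cO(2,3)$. Your term identifications (via $Q_L\cong\Omega^1_{\P^2}(2L)$, $\Omega^2_{\P^2}(2)\cong\cO(-1)$) and your preliminary reduction to $\lambda=5$ with the uniquely determined Serre bundle coincide with the paper's setup and proof.
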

\begin{proof}
Using the table from the previous proposition, the relative Beilinson spectral sequence
implies that $E(-2,-3)$ is isomorphic to the middle homology of a complex with the following terms
\begin{equation*}
0 \to \cO(-1,-2) \to pr_1^*\Omega^1(1) \otimes pr_2^*\Omega^1 \to \cO(0,-1) \to 0.
\end{equation*}
The result follows by twisting with $\cO(2,3)$ and noting that 
$Q_L=pr_1^*\Omega^1(2)$, $Q_h=pr_2^*\Omega^1(2)$, $\cO(L)=\cO(1,0)$ and $\cO(h)=\cO(0,1)$.
\end{proof}
\providecommand{\bysame}{\leavevmode\hbox to3em{\hrulefill}\thinspace}
\providecommand{\MR}{\relax\ifhmode\unskip\space\fi MR }
\providecommand{\MRhref}[2]{%
  \href{http://www.ams.org/mathscinet-getitem?mr=#1}{#2}
}
\providecommand{\href}[2]{#2}

\end{document}